\providecommand\@dotsep{5}
 \def\a{\alpha}
 \def\ha{{\widehat\a}}
 \def\be{\beta}
 \def\de{\delta}
 \def\e{\varepsilon}
 \def\heta{{\widehat\eta}}
 \def\ga{\gamma}
 \def\dga{{\dot{\gamma}}}
 \def\hga{{\widehat{\gamma}}}
 \def\Ga{\Gamma}
 \def\hGa{{\widehat\Gamma}}
 \def\vr{\varphi}
 \def\la{\lambda}
 \def\La{\Lambda}
 \def\si{\sigma}
 \def\om{\omega}
 \def\Om{\Omega}
 \def\whom{{\widehat{\om}}}
 \def\th{\theta}
 \def\re{{\mathbb R}}
 \def\na{{\mathbb N}}
 \def\then{\Longrightarrow}
 \def\ov{\overline}
 \def\Z{{\mathbb Z}}
 \def\D{{\mathbb D}}
 \def\hd{{\hat d}}
 \def\hE{{\widehat E}}
 \def\H{{\mathbb H}}
 \def\hH{{\widehat H}}
 \def\K{{\mathbb K}}
 \def\cL{{\mathcal L}}
 \def\hL{{\widehat{L}}}
 \def\M{{\mathbb M}}
 \def\hM{{\widehat{M}}}
 \def\tM{{\widetilde{M}}}
 \def\on{{\ov{n}}}
 \def\SS{{\mathbb S}}
 \def\T{{\mathbb T}}
 \def\tq1{{\tilde{q}_1}}
 \def\hw{{\widehat{w}}}
 \def\oH{{\ov{H}}}
 \def \lv{\left\vert}
 \def \rv{\right\vert}
 \def \lV{\left\Vert}
 \def \rV{\right\Vert}
 \def \ov{\overline}
 \def \then{\Longrightarrow}
 \DeclareMathOperator*{\tsum}{{\textstyle \sum}}
 \DeclareMathOperator{\diam}{diam}
 \DeclareMathOperator{\length}{length}
 \DeclareMathOperator{\interior}{int}
 \DeclareMathOperator{\Lip}{Lip}
  \renewcommand{\proofname}{{\bf Proof:}}
 \theoremstyle{plain}
 \newtheorem{Thm}{Theorem}[section]
 \newtheorem{Prop}[Thm]{\bf Proposition}
 \newtheorem{Lemma}[Thm]{\bf Lemma}
 \newtheorem{Corollary}[Thm]{\bf Corollary}
 \newtheorem{Theorem}[Thm]{\bf Theorem}
 \newtheorem{Proposition}[Thm]{\bf Proposition}
\newtheorem{claim}{\bf Claim}
 \theoremstyle{definition}
 \theoremstyle{remark}
 \newtheoremstyle{Cl}
  {5pt}
  {3pt}
  {\sl}
  {}
  {\it}
  {:}
  {.5em}
  {}
 \theoremstyle{Cl}
 \def\begincproof{
                  \renewcommand{\proofname}{\it Proof:}
                  \begin{proof}
                 }
 \def\endcproof{
                \renewcommand{\qedsymbol}{$\diamondsuit$}
                \end{proof} 
                \renewcommand{\qedsymbol}{\openbox}
                \renewcommand{\proofname}{\bf Proof:}
               }
 \renewcommand{\proofname}{{\bf Proof:}}
 \title[Homogenization on manifolds]
 {Homogenization on arbitrary manifolds\\
 Proof with test functions} 
\author{Gonzalo Contreras}  
\address{Gonzalo Contreras\newline\indent 
Centro de Investigaci\'on en Matem\'aticas\newline\indent 
A.P. 402, 36.000, Guanajuato, GTO, Mexico}
\email{gonzalo@cimat.mx}
\address{}
\author{Yuriria Estrada}
\address{Yuriria Estrada\newline\indent 
Centro de Investigaci\'on en Matem\'aticas\newline\indent 
A.P. 402, 36.000, Guanajuato, GTO, Mexico}
\email{yurira@cimat.mx}
\thanks{Gonzalo Contreras was partially supported by CONACYT, Mexico, grant A1-S-10145.}
\thanks{{\it Key words and phrases:} Aubry-Mather theory, Hamilton-Jacobi equations, Homogenization.} 
\thanks{Corresponding author: Gonzalo Contreras}
\subjclass[2020]{37J51, 70H20, 35B27}
\begin{document}

\begin{abstract}
We proof the homogenization of the Hamilton-Jacobi equation on arbitrary compact manifolds 
using Evans perturbed test function method.
\end{abstract}

\maketitle

\tableofcontents


\section{Introduction.}
In this paper we proof the homogenization of the Hamilton-Jacobi equation in 
arbitrary manifolds using Evans perturbed test function method.
A setting for generalizations of periodic homogenization results for PDE's in $\re^n$
to arbitrary manifolds was presented in Contreras, Iturriaga, Siconolfi \cite{CIS} together
with a proof of the homogenization of the Hamilton-Jacobi equation on arbitrary manifolds
using the convergence of the formula of the solution. See also Contreras \cite{mca2013}.

\nocite{Concordel}
\nocite{Tran}
\nocite{Bressan}
\nocite{Ev5}

We review the setting of the homogenization of the Hamilton-Jacobi equation on manifolds.
Let $M$ be a compact boundaryless manifold and $T^*M$ its cotangent bundle.
 A Tonelli hamiltonian on $M$ is a $C^2$ function $H:T^*M\to\re$ satisfying
\begin{enumerate}[(a)]
\item  Convexity: 
$\tfrac{\partial H}{\partial p\partial p} (x,p)$ is positive definite  
$\forall (x,p)\ \in T^*M$.
\item Superlinearity:
$\lim\limits_{|p|\to\infty} \frac{H(x,p)}{|p|}=\infty$ uniformly on $x\in M$.
\end{enumerate}

The maximal abelian cover $\hM$ is the covering map  $\hM\to M$ with group of 
Deck transformations $H_1(M,\Z)$. We see $\hM$ as an (infinite) 
repetition of a fundamental domain with a ``periodicity'' given by the group $H_1(M,\Z)$.
On the covering $\hM$ we consider the Hamilton-Jacobi equation for $u:\hM\times \re^+\to\re$:
\begin{align*}
\partial_t u + \hH(x, \partial_xu) = 0,
\\
u(x,0)=f(x);
\end{align*}
where $\hH:T^*\hM\to\re$ is the lift of $H$.
In the homogenization problem we want to see $\hM$ ``from far away'' and prove that the
solution approximates a solution of a simpler equation as we see it from far away.

\subsection{Convergence of spaces.}\quad

To ``see $\hM$ from far away" is interpreted as multiplying the metric of $\hM$ by $\e$ and letting $\e$ tend to zero.  

The following definition is inspired in Gromov's Hausdorff convergence but it is made ad hoc for our homogenization problem. Let $M_\e$ be the metric space $(\hM,d_\e)$ where $d_\e:=\e\,\hd$ and $\hd$ is the metric on $\hM$ obtained by lifting the riemannian metric on $M$.
Since the passage of $\e\to 0$ will only see the large scale properties of the space $M$, 
our definition will be invariant under  quasi-isometries.
 In fact $\hM$ is quasi-isometric to the group of Deck transformations $H_1(M,\Z)$, 
and $M_\e$ is quasi-isometric to $\e H_1(M,\Z)$. We want to make a formal definition for the 
intuitive fact that 
$$
\e \, H_1(M,\Z) = \e \Z_{a_1}\oplus\cdots\oplus \e \Z_{a_k}\oplus \e \Z^n
\xrightarrow{\;\;\e\downarrow 0\;\;} \re^n =H_1(M,\re).
$$

Let $(M,d)$, $(M_n,d_n)_{n\in\na}$ be complete metric spaces and  $F_n:(M_n,d_n)\to (M,d)$ continuous functions.
We say that $\lim_n(M_n,d_n,F_n)=(M,d)$ if 
\begin{enumerate}
\item\label{1a} There are $B, A_n>0$ with $\lim_n A_n=0$ such that
\begin{equation*}\label{ea}
\forall x,y\in M_n:\qquad 
B^{-1} d_n(x,y) - A_n \le d(F_n(x),F_n(y)) \le B\, d_n(x,y)+A_n.
\end{equation*}
\item\label{1b} For all $y\in M$ and $n\in\na$ there are $x_n\in M_n$ such that $\lim_n F_n(x_n)=y$.
\end{enumerate}
Observe that \eqref{1b} is a kind of surjectivity condition. And \eqref{1a} implies that 
$$
\diam F_n^{-1}\{y\} \le B \, A_n \xrightarrow{\;\;n\;\;} 0,
$$
a kind of injectivity condition.
\addtocounter{equation}{2}

If $\lim_n(M_n,d_n,F_n)=(M,d)$, and $f_n:(M_n,d_n)\to\re$, $f:(M,d)\to\re$ are continuous,
we say that $\lim_n f_n=f$ {\it uniformly on compact sets} if for every compact set $K\subset M$
\begin{equation}\label{ucs}
\lim_n\sup_{x\in F_n^{-1}(K)} 
\lv f_n(x) - f(F_n(x))\rv 
=0.
\end{equation}
We say that the famlity $\{f_n\}$ is {\it equi-Lipschitz} if there is $K>0$ such that 
$$
\forall n \qquad \forall x,y\in M_n \qquad
\lv f_n(x)-f_n(y)\rv < K\, d_n(x,y).
$$
\bigskip

Fix $x_0\in\hM$.
Fix a basis $c_1,\ldots c_k$ for $H^1(M,\re)$.
Fix closed 1-forms $\om_i$ on $M$ such that $c_i=[\om_i]$.
Define $G:\hM\to H_1(M,\re)= H^1(M,\re)^*$ by
\begin{equation}\label{dfg}
G(x)\cdot c_i = \oint_{x_0}^x \whom_i\;,
\end{equation}
where $\whom_i$ is the pullback of $\om_i$ on $\hM$.
Let $F_\e:(M_\e,d_\e)\to H_1(M,\re)$ be 
\begin{equation}\label{Fe}
F_\e(x):= \e\, G(x).
\end{equation}
    
\begin{Proposition} [{ \cite[p. 241]{CIS}}]\qquad
$\lim_{\e\to 0}(\hM,\e d, F_\e)= H_1(M,\re)$.
\end{Proposition}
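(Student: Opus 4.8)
The plan is to verify the two conditions \eqref{1a} and \eqref{1b} in the definition of convergence, with $(M_n,d_n)$ replaced by $(\hM,\e\hd)$, $F_n$ by $F_\e$, and the limit space taken to be the finite-dimensional vector space $H_1(M,\re)$ equipped with an arbitrary norm $|\cdot|$ (the choice is immaterial, since all norms on $H_1(M,\re)$ are equivalent and the notion of convergence is invariant under quasi-isometries). Both conditions will follow from a single quasi-isometry estimate for $G$: there exist $B>0$ and $C\ge0$ with
\begin{equation*}
B^{-1}\,\hd(x,y)-C\ \le\ |G(x)-G(y)|\ \le\ B\,\hd(x,y)\qquad\text{for all }x,y\in\hM .
\end{equation*}
Granting this, condition \eqref{1a} is immediate: multiplying by $\e$ and using $|F_\e(x)-F_\e(y)|=\e\,|G(x)-G(y)|$ together with $\e\hd=d_\e$, the inequality becomes the one required in \eqref{1a}, with the same constant $B$ and with $A_\e:=\e\,C\to 0$.

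For the right-hand inequality, note first that $G$ is well defined, because each pullback $\whom_i$ has vanishing periods on $\hM$: a loop in $\hM$ projects to a loop in $M$ that is null-homologous, since precisely the null-homologous loops of $M$ lift to loops in the maximal abelian cover. Consequently $G(x)-G(y)=\bigl(\oint_y^x\whom_1,\dots,\oint_y^x\whom_k\bigr)$ depends only on the endpoints; integrating the $\whom_i$ along a minimizing geodesic from $y$ to $x$ and using that the smooth forms $\om_i$ are bounded on the compact manifold $M$ gives $|G(x)-G(y)|\le c\,\hd(x,y)$.

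The left-hand inequality is the crux of the argument, and the step I expect to be the main obstacle: it asserts that $G$ distorts distances by at most a bounded factor, so that the infinite manifold $\hM$ is, on the large scale, nothing but the Euclidean space $H_1(M,\re)$. The mechanism is equivariance of $G$. For a deck transformation $\ga\in H_1(M,\Z)$, formula \eqref{dfg} gives $G(\ga x)=G(x)+\rho(\ga)$, where $\rho\colon H_1(M,\Z)\to H_1(M,\re)$ is the canonical map; indeed, a path in $\hM$ from $x$ to $\ga x$ projects to a loop in $M$ whose integral homology class is $\ga$, and $\oint_x^{\ga x}\whom_i$ is the $i$-th period of that loop. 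Since $M$ is compact, $H_1(M,\Z)$ is finitely generated, say $H_1(M,\Z)\cong\Z^n\oplus T$ with $T$ finite and $n=\dim H_1(M,\re)$, and $\La:=\rho(H_1(M,\Z))$ is a lattice of full rank $n$, hence cocompact, in $H_1(M,\re)$. Fix a compact set $D\subset\hM$ with $x_0\in D$ and $\hM=\bigcup_{\ga\in H_1(M,\Z)}\ga D$, so that $G(x_0)=0$. Now take $x,y\in\hM$, write $x=\ga x'$ and $y=\de y'$ with $x',y'\in D$, and set $g:=\ga\de^{-1}$. Equivariance and the Lipschitz bound of the previous paragraph give $|G(x')|=|G(x')-G(x_0)|\le c\,\diam D$ and, likewise, $|G(y')|\le c\,\diam D$, so that $|G(x)-G(y)|\ge|\rho(g)|-2c\,\diam D$. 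It thus remains to dominate $\hd(x,y)$ by an affine function of $|\rho(g)|$, and here finite generation enters: deck transformations being isometries, $\hd(x,y)\le 2\,\diam D+\hd(x_0,gx_0)$; writing $g$ in terms of fixed generators and concatenating geodesics issued from $x_0$ shows that $\hd(x_0,gx_0)$ grows at most linearly in the $\ell^1$-norm of the free part of $g$; and that norm is comparable to $|\rho(g)|$ by equivalence of norms on $\re^n$, with a bounded error coming from the finitely many torsion elements, on which $\rho$ vanishes. Chaining and adjusting the additive constant yields $|\rho(g)|\ge B_0^{-1}\hd(x,y)-C_0$, hence the left-hand inequality; the displayed estimate then holds with $B:=\max\{c,B_0\}$.

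It remains to check condition \eqref{1b}. Let $y\in H_1(M,\re)$. Since $\La$ is a cocompact lattice, for every $\e>0$ there is $\ga_\e\in H_1(M,\Z)$ with $|\rho(\ga_\e)-\e^{-1}y|\le\kappa$, where $\kappa$ is the covering radius of $\La$. Set $x_\e:=\ga_\e x_0\in\hM$. By equivariance and $G(x_0)=0$ we get $F_\e(x_\e)=\e\,G(\ga_\e x_0)=\e\,\rho(\ga_\e)$, whence $|F_\e(x_\e)-y|=\e\,|\rho(\ga_\e)-\e^{-1}y|\le\e\,\kappa\to 0$ as $\e\to 0$. This verifies both conditions and completes the proof.
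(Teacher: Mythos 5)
Your proof is correct. Note, though, that the paper itself does not reprove this Proposition: it cites \cite[p.~241]{CIS} and only establishes the adjacent facts it needs later, namely that $G$ and $F_\e$ are Lipschitz (Lemma~\ref{lLip}, which is your right-hand inequality, proved the same way by integrating the bounded forms $\om_i$ along a minimizing geodesic) and that $F_\e$ is proper. The paper's properness proof is in effect a one-basepoint version of your left-hand inequality: it bounds $d_{\hM}(x_0,x)$ affinely by $\lV G(x)\rV$ using exactly your ingredients --- the representative loops $\ga_i$ of a basis of the free part, the estimate $d_{\hM}(x_0,\on\cdot x_0)\le\sum_i|n_i|\,\ell(\ga_i)$ obtained by lifting the concatenation $\ga_1^{n_1}*\cdots*\ga_k^{n_k}$, the nonsingular period matrix $A=[\oint_{\ga_j}\om_i]$ playing the role of your norm comparison between the $\ell^1$-norm of the free part and $|\rho(g)|$, and finiteness of the torsion absorbed into an additive constant $Q_1$. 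What you do differently is to package these into a genuine two-sided quasi-isometry estimate valid for all pairs $x,y$, via the equivariance $G(\ga x)=G(x)+\rho(\ga)$ and a compact fundamental domain, and then to verify condition \eqref{1b} directly from cocompactness of the lattice $\rho(H_1(M,\Z))$ in $H_1(M,\re)$. This yields a complete, self-contained proof of the convergence statement (including the surjectivity-type condition, which the paper's Section~2 never addresses), at the price of slightly more bookkeeping than the properness estimate alone; conversely, the paper's route extracts only what is needed for Corollary~\ref{c1.3} and leaves the convergence itself to \cite{CIS}.
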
    

Here all the torsion of the fundamental group is killed in the limit so it is the same limit for the 
maximal abelian cover as the limit for  the maximal free abelian cover.

In our homogenization result we will have a family of solutions $v^\e$ of properly scaled 
family of Hamilton-Jacobi equations on $M_\e$. 
Then we prove that $v^\e$ has a limit $u$ defined on the limit space 
$\lim_{\e\to 0} M_\e = H_1(M,\re)$, which is a solution of a simpler Hamilton-Jacobi equation.

\subsection{Invariance under quasi-isometries.}\label{sqi}
\quad

Given two metric spaces $(M_1,d_1)$ and $(M_2,d_2)$ a function $q:M_1\to M_2$ is a
{\it quasi-isometry} 
if there are constants $A>1$, $B, C>0$ such that 
\begin{gather}
\forall x,y\in M_1 \qquad 
A^{-1} d_1(x,y)-B\le d_2(q(x),q(y))\le A_1 d(x,y)+B,
\label{2a}
\\
\forall z\in M_2 \qquad \exists x\in M_1 \qquad d_2(z,q(x))\le C.
\label{2b}
\end{gather}
We say that the metric spaces $(M_1,d_1)$, $(M_2,d_2)$ are {\it quasi-isometric} 
if there exists a quasi-isometry $q:(M_1,d_1)\to (M_2,d_2)$.

If $q_1:(M_1,d_1)\to (M_2,d_2)$ is a quasi-isometry, 
then there exists a quasi-isometry $q_2:(M_2,d_2)\to (M_1,d_1)$.
Indeed, given $z\in M_2$, by \eqref{2b} there is $x\in M_1$ such that 
$d(q_1(x),z)\le C$. It is enough to take $q_2(z)=x$. In fact the quasi-isometry
 is an equivalence relation.

\begin{Proposition}\quad

If $(M_1,d_1)$, $(M_2,d_2)$ are quasi-isometric and 
$\lim_\e (M_2,\e\,d_2)$ exists then
$$
\lim_\e(M_1,\e d_1)=\lim_\e(M_2,\e d_2).
$$
\end{Proposition}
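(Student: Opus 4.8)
The plan is to build an explicit bridge between the two limit constructions: suppose $q_1:(M_1,d_1)\to(M_2,d_2)$ is the given quasi-isometry with constants $A,B,C$, and suppose $\lim_\e(M_2,\e d_2,G_\e)=(N,d_N)$ for some limit space $N$ and maps $G_\e:M_2\to N$. I will show $\lim_\e(M_1,\e d_1,G_\e\circ q_1)=(N,d_N)$, and since the limit is (by its very definition) independent of the choice of the maps up to the relation above, this gives the claimed equality of limits. Note that in the definition of $\lim_n(M_n,d_n,F_n)$ the spaces $M_n$ vary with $n$, whereas here $M_1$ is fixed and only the metric $\e d_1$ varies; so the relevant instance is $M_n=(M_1,\e_n d_1)$ for any sequence $\e_n\downarrow0$, with connecting maps $F_n=G_{\e_n}\circ q_1$.

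The verification has two parts, matching conditions \eqref{1a} and \eqref{1b}. For \eqref{1a}: fix $n$, write $\e=\e_n$, and take $x,y\in M_1$. By the quasi-isometry inequality \eqref{2a}, $A^{-1}d_1(x,y)-B\le d_2(q_1(x),q_1(y))\le A\,d_1(x,y)+B$; multiplying by $\e$ gives $\e\,d_2(q_1(x),q_1(y))$ sandwiched between $\e A^{-1}d_1(x,y)-\e B$ and $\e A\,d_1(x,y)+\e B$. Feeding this into the \eqref{1a} estimate that holds for $(M_2,\e d_2,G_\e)$ — with constants $B'$ and $A'_n\to0$ — and composing the two affine sandwiches yields
\begin{equation*}
(B'A)^{-1}d_1(x,y) - \big(A'_n + (B')^{-1}\e_n B\big)\le d_N\big(F_n(x),F_n(y)\big)\le B'A\,d_1(x,y)+\big(A'_n+B'\e_n B\big).
\end{equation*}
Since $\e_n\to0$ and $A'_n\to0$, the new error terms tend to $0$, and the multiplicative constant $B'A$ is uniform in $n$, so \eqref{1a} holds. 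For \eqref{1b}: given $y\in N$, first use \eqref{1b} for $(M_2,\e d_2,G_\e)$ to get $z_n\in M_2$ with $G_{\e_n}(z_n)\to y$; then use the coarse surjectivity \eqref{2b} of $q_1$ to pick $x_n\in M_1$ with $d_2(q_1(x_n),z_n)\le C$. It remains to check $F_n(x_n)=G_{\e_n}(q_1(x_n))\to y$, which follows from the \eqref{1a}-type estimate for $G_{\e_n}$ applied to the pair $q_1(x_n),z_n$: $d_N(G_{\e_n}(q_1(x_n)),G_{\e_n}(z_n))\le B'\,\e_n\,C + A'_n\to0$, and $G_{\e_n}(z_n)\to y$ by construction, so by the triangle inequality $F_n(x_n)\to y$.

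Finally I would record the (essentially formal) point that makes "$\lim_\e(M_1,\e d_1)=\lim_\e(M_2,\e d_2)$" meaningful: the limit space, when it exists, is unique up to isometry and does not depend on the connecting maps, because any two admissible families of connecting maps into a prospective limit differ by a map that is asymptotically an isometry. Concretely, if $\lim_n(M_n,d_n,F_n)=(N,d_N)$ and $\lim_n(M_n,d_n,F'_n)=(N',d_{N'})$, one builds an isometry $N\to N'$ by sending $\lim F_n(x_n)$ to $\lim F'_n(x_n)$ along sequences $x_n\in M_n$ realizing points of $N$; conditions \eqref{1a}, \eqref{1b} guarantee this is well-defined, distance-preserving, and onto. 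I expect the main obstacle to be purely bookkeeping: keeping the several affine sandwich estimates (quasi-isometry constants $A,B,C$ versus convergence constants $B',A'_n$) straight when composing them, and being careful that the sequence $\e_n\downarrow0$ is arbitrary so the conclusion is genuinely about the $\e\to0$ limit and not just a subsequence. There is no real analytic difficulty — it is a diagram-chase in the category of metric spaces with coarse maps.
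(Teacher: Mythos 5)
Your proof is correct and follows essentially the same route as the paper: compose the connecting maps for $(M_2,\e d_2)$ with the quasi-isometry $q_1$, verify condition \eqref{1a} by combining the two affine sandwich estimates, and verify \eqref{1b} by choosing a coarse preimage under $q_1$ and applying the triangle inequality together with the \eqref{1a} estimate. The only difference is your closing remark on uniqueness of the limit space, which the paper leaves implicit; it does not change the argument.
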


\begin{proof}\quad

Suppose that 
$(K,d)=\lim_\e(M_2,\e\,d_2,F_\e)$ and 
$B,A_\e>0$ satisfy \eqref{1a}. Then
\begin{equation}\label{1ineq}
\forall z,w\in M_2\qquad
B^{-1} \, \e\, d_2(z,w) - A_\e \le
d(F_\e (z),F_\e(w))
\le B\,\e\, d_2(z,w)+A_\e.
\end{equation}
Let $q:M_1\to M_2$ be a quasi-isometry and let $A_1>1$, $B_1,C_1>0$ satisfy \eqref{2a} and \eqref{2b}.  
Applying  inequalities~\eqref{1ineq} to $z=q(x)$, $w=q(y)$ we have that 
for all $x,y\in M_1$
\begin{align*}
B^{-1} \, \e\, d_2(q(x),q(y)) - A_\e 
\le
d(F_\e& q(x),F_\e q(y))
\le B\,\e\, d_2(q(x),q(y))+A_\e,
\\
(B A_1)^{-1}\, \e\, d_2(q(x),q(y)) - (A_\e+\e B_1) \le
d(F_\e& q(x),F_\e q(y))
\le
\\
&\le (B A_1)\,\e\, d_2(q(x),q(y))+(A_\e+\e B_1).
\end{align*}
Thus $F_\e\circ q:(M_1,\e d_1)\to (K,d)$ satisfies \eqref{1a}.

Let $w\in K$, by \eqref{1b} for every $\e$ there is $z_\e\in M_2$
such that 
\begin{equation}
\lim_\e d(F_\e(z_\e),w)=0.
\end{equation}
By \eqref{2b} there is $x_\e\in M_1$ such that 
$d_2(q(x_\e),z_\e)\le C$.
Then
\begin{align*}
d(F_\e q(x_\e),w) 
&\le d(F_\e q(x_\e),F_\e (z_\e))+d(F_\e(z_\e),w)
\\
&\le B\,\e\,d_2(q(x_\e),z_\e) + A_\e + d(F_\e(z_\e),w)
\\
&\le B\, \e\, C + A_\e +d(F_\e(z_\e),w)
\xrightarrow{\;\;\e\;\;} 0.
\end{align*}
Therefore  $\lim_\e (M_1,\e\,d_1, F_\e\circ q) = (K,d)$.

\end{proof}

\subsection{The Scaling.} \quad

In the usual homogenization of the Hamilton-Jacobi equation in $\re^n$ the proper
scaling to obtain convergence is just to 
``replace the coefficients $x$ of the PDE by $\tfrac x\e$'', this is
\begin{equation}
\begin{aligned}
\partial_t u^\e (x,t) + H(\tfrac x\e, \partial_x u(x,t)) =0,
 \\
u^\e(x,0 ) = f(x).
\end{aligned}
\label{ehj}
\end{equation}
Our way to interpret this in a manifold is to consider the function 
$$
v^\e(\tfrac x\e,t) := u^\e(x,t).
$$
The equation \eqref{ehj} becomes
\begin{align}
\partial_t v^\e + H(y, \tfrac 1\e \partial_y v^\e) =0,
 \label{ev1hj}
 \\
v^\e(y,0 ) = f(\e y).
\label{ev2hj}
\end{align}
Now equation~\eqref{ev1hj} makes sense in the manifold $M_\e$, with
$v^\e:M_\e\times[0,+\infty[\to\re$.
And equation \eqref{ev2hj} becomes $v^\e(y,0)=f (F_\e(y))$, $y\in M_\e$, when we
interpret $``\e y$''$ := F_\e(y)$.

\subsection{Viscosity solutions.}\quad

Hamilton-Jacobi equations usually do not have smooth solutions due to 
the intersection or ``shocks'' of their characteristic lines. Local (smooth) solutions
can be found using the characteristic method. 
Patching local solutions can give a plethora of almost everywhere solutions.
A way to recover existence and uniqueness results for the Hamilton-Jacobi equation 
\begin{equation}\label{HJ}
\partial_t u + H(x, \partial _x u)=0, 
\qquad u:\Om\to\re,
\qquad \Om=M\times[0,\infty[.
\end{equation}
is to consider viscosity solutions.

Write $\Om:=M\times [0,\infty[$.
A continuous function $u\in C^0(\Om,\re)$ is a {\it viscosity subsolution} of the 
Hamilton-Jacobi equation \eqref{HJ} if
\begin{gather}
\forall \phi \in C^1(\Om,\re),
\; \text{\it if $(x_0,t_0)\in \Om$ is a local maximum of $u-\phi$, }
\notag
\\
then \quad \partial_t\phi(x_0,t_0) + H\big(x_0,\partial_x \phi(x_0,t_0)\big)\le 0.
\label{subsol}
\end{gather}

A continuous function $u\in C^0(\Om,\re)$ is a {\it viscosity supersolution} of the 
Hamilton-Jacobi equation \eqref{HJ} if
\begin{equation}\label{supersol}
\begin{gathered}
\forall \phi \in C^1(\Om,\re),
\; \text{\it if $(x_0,t_0)\in \Om$ is a local minimum of $u-\phi$, }
\\
then \quad \partial_t \phi(x_0,t_0) + H\big(x_0,\partial_x \phi(x_0,t_0)\big)\ge 0.
\end{gathered}
\end{equation}

A function $u\in C^0(\Om,\re)$ is a {\it viscosity solution} of the
Hamilton-Jacobi equation \eqref{HJ} if it is both a viscosity subsolution and
a viscosity supersolution of \eqref{HJ}.

\begin{Lemma}\label{l03}\quad

In the definition of viscosity subsolution  (supersolution)
it is equivalent
\begin{enumerate}
\item
 to take a {\sl strict} local maximum ({\sl strict} local minimum).
 \label{l11}
\item 
to take Lispchitz $C^1$ test functions.
\label{l12}
\end{enumerate}
\end{Lemma}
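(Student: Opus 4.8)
The plan is to carry out the argument for viscosity subsolutions; the supersolution statement follows verbatim after exchanging local maxima for local minima and reversing the inequalities. In both equivalences one implication is free: a \emph{strict} local maximum is a local maximum, and a Lipschitz $C^1$ function is $C^1$, so shrinking the admissible class of test functions in \eqref{subsol} can only make the requirement easier to satisfy. What needs proof is the converse: if the inequality in \eqref{subsol} holds for every \emph{Lipschitz} $\phi\in C^1(\Om,\re)$ at every \emph{strict} local maximum of $u-\phi$, then it holds for an arbitrary $\phi\in C^1(\Om,\re)$ at an arbitrary local maximum $(x_0,t_0)$ of $u-\phi$. Establishing this stronger implication settles \eqref{l11} and \eqref{l12} simultaneously.

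Given such a $\phi$, I would fix coordinates on a chart $U\ni x_0$, choose a neighborhood $W=U\times I\subset\Om$ of $(x_0,t_0)$ on which $(x_0,t_0)$ maximizes $u-\phi$, and in these coordinates put $\rho(x,t):=|x-x_0|^2+|t-t_0|^2$, a smooth nonnegative function vanishing exactly to second order at $(x_0,t_0)$. Then I would take a smooth cutoff $\chi\colon\Om\to[0,1]$ with $\chi\equiv 1$ on a small ball $B_r$ with $\overline{B_r}\subset W$ and $\supp\chi\subset B_{2r}\subset W$, and, for a fixed $\delta>0$, define the perturbed test function
\[
\psi(x,t):=\phi(x_0,t_0)+\chi(x,t)\,\bigl[\,\phi(x,t)-\phi(x_0,t_0)+\delta\,\rho(x,t)\,\bigr].
\]
The $\delta\rho$ term turns the maximum into a strict one while leaving the first-order jet unchanged, so that no passage to the limit $\delta\to 0$ is needed; the cutoff makes $\psi$ globally defined, $C^1$, and Lipschitz.

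Three verifications then finish the proof. (i) \emph{$\psi\in C^1(\Om,\re)$ and $\psi$ is Lipschitz:} on $B_r$, $\psi=\phi+\delta\rho$ is $C^1$ and Lipschitz on the relatively compact set $\overline{B_r}$; on $B_{2r}\setminus B_r$ the gradient is bounded, since $|\nabla\chi|$ is of order $1/r$ while $|\phi-\phi(x_0,t_0)+\delta\rho|$ is of order $r$ on $B_{2r}$ (as $\phi$ is $C^1$), the remaining terms being products of $\chi$ with continuous functions; outside $B_{2r}$, $\psi\equiv\phi(x_0,t_0)$. Since $M$ is compact, a $C^1$ function that is constant off a compact subset of $\Om$ is Lipschitz. (ii) \emph{$(x_0,t_0)$ is a strict local maximum of $u-\psi$:} near $(x_0,t_0)$ one has $\psi=\phi+\delta\rho$, hence $(u-\psi)(x,t)=(u-\phi)(x,t)-\delta\rho(x,t)\le (u-\phi)(x_0,t_0)-\delta\rho(x,t)=(u-\psi)(x_0,t_0)-\delta\rho(x,t)$, which is strictly below $(u-\psi)(x_0,t_0)$ unless $\rho(x,t)=0$, i.e.\ unless $(x,t)=(x_0,t_0)$. (iii) \emph{$\psi$ and $\phi$ have the same derivatives at $(x_0,t_0)$:} since $\chi\equiv 1$ and $\rho$ vanishes to second order there, $\partial_t\psi(x_0,t_0)=\partial_t\phi(x_0,t_0)$ and $\partial_x\psi(x_0,t_0)=\partial_x\phi(x_0,t_0)$. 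Applying the hypothesis to the admissible test function $\psi$ at its strict local maximum $(x_0,t_0)$ gives $\partial_t\psi(x_0,t_0)+H(x_0,\partial_x\psi(x_0,t_0))\le 0$, which by (iii) is exactly $\partial_t\phi(x_0,t_0)+H(x_0,\partial_x\phi(x_0,t_0))\le 0$.

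I expect the only delicate points — essentially bookkeeping — to be confirming in step (i) that the cutoff construction produces a function that is globally $C^1$ \emph{and} globally Lipschitz on the noncompact domain $\Om=M\times[0,\infty[$ (handled by $\psi$ being constant outside a compact set), and checking that nothing breaks when $(x_0,t_0)$ lies on the boundary $M\times\{0\}$, where maxima and $C^1$-regularity are taken relative to $\Om$. Apart from that, this is the standard Evans-type perturbation of test functions.
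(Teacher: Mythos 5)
Your proof is correct and follows essentially the same route as the paper: add a quadratic penalty $\delta\rho$ in local coordinates to turn the maximum into a strict one without changing the first-order jet at $(x_0,t_0)$, and modify the test function away from the point so that it becomes globally Lipschitz while keeping the same derivatives there. The only difference is cosmetic --- you merge the two items into a single construction and make the Lipschitz modification explicit via a cutoff (so $\psi$ is constant outside a compact set), whereas the paper treats the two items separately and simply invokes the existence of a Lipschitz $C^1$ extension of $\phi|_U$.
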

\begin{proof}\quad

\eqref{l11}. Suppose  the definition for subsolution holds for strict local maxima.
If $u-\vr$ with $\phi\in C^1$ has a local maximum at $(x,t)$, define in local coordinates 
$\phi(y,s):=\vr(y,t)+|x-y|^2+|s-t|^2$. Then $u-\phi$ has a strict local maximum at $(x,t)$ and
 inequality~\eqref{subsol} holds for $\phi$. But the derivatives of $\phi$ and $\vr$ at $(x,t)$ are equal.
 Then inequality~\eqref{subsol} holds for $\vr$ at $(x,t)$.
 The converse is easier.
 
 \eqref{l12}. Suppose the definition for subsolutions holds for Lipschitz test functions.
 Given a test function $\phi$ since  $\phi\in C^1(\Om,\re)$ we have that $\phi$ 
 is Lipschitz in a neighborhood of $U$ of $(x_0,t_0)$. We can extend $\phi|_U$ 
 to a Lipschitz $C^1$ function $\vr$ on $\Om$. Then inequality \eqref{subsol} holds
 for $\vr$ and $D\vr(x_0,t_0)=D\phi(x_0,t_0)$. Thus inequality \eqref{subsol} holds for $\phi$.

\end{proof}

We shall prove the following Theorem  using Evans
perturbed test function method.
 
\begin{Theorem}[Contreras, Iturriaga, Siconolfi \cite{CIS}]\label{T1}\quad

Let $M$ be a closed Riemannian manifold.
Let $H:T^*M\to\re$ be a Tonelli Hamiltonian and
$f_\e:(M_\e,d_\e)\to \re$ an equi-Lipschitz family such that
$\lim_\e f_\e= f$ uniformly, with 
\linebreak 
$f:H_1(M,\re)\to\re$. 

Let $\hH$ be the lift of $H$ to $\hM$ and let $v^\e$ be the
Lipschitz viscosity solution to the problem
\begin{equation}\label{eve}
\begin{gathered}
\partial_t v^\e+\hH\big(y,\tfrac 1\e \partial_y v^\e\big) =0, \\
v^\e(y,0)=f_\e(y).
\end{gathered}
\end{equation}
Then the family $v^\e:\hM_\e\times]0,+\infty[\to\re$ is 
equi-Lipschitz and
 $$
 \lim_{\e\to 0}v^\e=u:H_1(M,\re)\to\re
 $$
uniformly on compact sets of $H_1(M,\re)\times]0,+\infty[$,
where $u$ is the solution to
\begin{equation}\label{hhj}
\begin{gathered}
\partial_t u +\oH(\partial_x u) = 0,\\
u(x,0)=f(x);
\end{gathered}
\end{equation}
and $\oH:H^1(M,\re)\to\re$ is $\oH=\a$ Mather's alpha function.
\end{Theorem}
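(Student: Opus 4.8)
The plan is to prove, in order: an a priori equi‑Lipschitz bound for $\{v^\e\}$; compactness, which yields subsequential limits $u$ on $H_1(M,\re)\times[0,\infty[$; that every such limit solves \eqref{hhj} in the viscosity sense, by Evans' perturbed test function method; and uniqueness for \eqref{hhj}, which promotes subsequential to full convergence. To begin, rewrite \eqref{eve} as $\pt v^\e=-\hH(y,\tfrac1\e\partial_y v^\e)$. Superlinearity of $H$ and the comparison principle on $\hM$ propagate the Lipschitz constant of the datum: if every $f_\e$ is $K$‑Lipschitz for $d_\e$, then $v^\e(\cdot,t)$ is $L_0$‑Lipschitz for $d_\e$ for all $t\ge0$, where $L_0:=\max(K,C_H)$ and $C_H$ depends only on $H$ through its coercivity (the set $\{p:\min_x H(x,p)\le-\min_x H(x,0)\}$ being bounded); then, from the equation, $|\pt v^\e|\le\sup\{|H(x,p)|:x\in M,\,|p|\le L_0\}$, so $\{v^\e\}$ is equi‑Lipschitz in $(y,t)$. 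Comparison with the affine‑in‑$t$ sub‑ and supersolutions $(y,t)\mapsto a+bt$, together with conditions (1)--(2) in the definition of $\lim_\e(\hM,d_\e,F_\e)=H_1(M,\re)$, bounds $|v^\e|$ uniformly in $\e$ on each $F_\e^{-1}(K)\times[a,b]$ with $K\subset H_1(M,\re)$ compact and $0<a\le b$. By the Arzelà--Ascoli theorem in the form adapted to this convergence of spaces, every sequence $\e_j\downarrow0$ has a subsequence along which $v^\e\to u$ uniformly on compact sets, with $u:H_1(M,\re)\times[0,\infty[\to\re$ Lipschitz and $u(\cdot,0)=f$.

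Next, the correctors. Fix $P\in H^1(M,\re)$ and let $\eta_P:=\sum_i P_i\om_i$, a smooth closed $1$‑form on $M$ with $[\eta_P]=P$. By definition $\oH(P)=\a(P)$ is the unique constant $c$ for which the cell problem $H(x,\eta_P(x)+d_x\chi)=c$ has a viscosity solution on the compact manifold $M$; weak KAM theory supplies such a Lipschitz solution $\chi_P$, and its lift $\hat\chi_P$ to $\hM$ is a \emph{bounded} Lipschitz viscosity solution of $\hH(y,\widehat{\eta_P}(y)+d_y\hat\chi_P)=\oH(P)$. Identity \eqref{dfg} gives $d_y\langle P,G(y)\rangle=\widehat{\eta_P}(y)$, whence, for any $C^1$ function $\phi$ on $H_1(M,\re)\times\,]0,\infty[$,
\begin{equation*}
d_y\big(\phi(F_\e(y),t)\big)=\e\,\widehat{\eta_{Q(y,t)}}(y),\qquad Q(y,t):=\partial_x\phi(F_\e(y),t)\in H^1(M,\re),
\end{equation*}
while $\widehat{\eta_{Q}}-\widehat{\eta_{Q'}}=\widehat{\eta_{Q-Q'}}$ has pointwise norm $\le C\,|Q-Q'|$ everywhere on $\hM$.

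Now the perturbed test function argument. Suppose $u-\phi$ has a strict local maximum at $(x_0,t_0)$, $t_0>0$ (Lemma~\ref{l03}), put $P:=\partial_x\phi(x_0,t_0)$, and assume for contradiction $\pt\phi(x_0,t_0)+\oH(P)=\theta>0$. Set $\phi^\e:=\phi\circ F_\e+\e\,\hat\chi_P$ and, for a ball $B(x_0,r)\subset H_1(M,\re)$, $Q_r:=B(x_0,r)\times\,]t_0-r,t_0+r[$, $Q_r^\e:=F_\e^{-1}(B(x_0,r))\times\,]t_0-r,t_0+r[$. I claim that for $r$, and then $\e$, small $\phi^\e$ is a viscosity supersolution of $\pt\phi^\e+\hH(y,\tfrac1\e\partial_y\phi^\e)\ge\theta/2$ on $Q_r^\e$: if $\Psi\in C^1$ touches $\phi^\e$ from below at $(y_1,t_1)\in Q_r^\e$, then $y\mapsto\e^{-1}(\Psi-\phi\circ F_\e)(y,t_1)$ touches $\hat\chi_P$ from below at $y_1$, so the supersolution property of $\hat\chi_P$ together with the identity above yields, with $Q_1:=Q(y_1,t_1)$,
\begin{equation*}
\hH\big(y_1,\ \tfrac1\e\,d_y\Psi(y_1,t_1)+\widehat{\eta_{P-Q_1}}(y_1)\big)\ \ge\ \oH(P);
\end{equation*}
the touching inequality bounds $|\tfrac1\e d_y\Psi(y_1,t_1)|$ independently of $\e$ (by the Lipschitz constant of $\hat\chi_P$ and $\|\widehat{\eta_{Q_1}}\|_\infty$), so $\hH$ is uniformly continuous there, and as $|Q_1-P|$ is arbitrarily small for small $r$ while $\pt\Psi(y_1,t_1)=\pt\phi(F_\e(y_1),t_1)$ is close to $\pt\phi(x_0,t_0)$, the residual stays $\ge\theta/2$. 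Since $v^\e$ is in particular a viscosity subsolution of $\pt v^\e+\hH(y,\tfrac1\e\partial_y v^\e)=0$, the comparison principle on the bounded region $Q_r^\e$, applied to the subsolution $v^\e$ and the supersolution $\phi^\e$, gives $\max_{\overline{Q_r^\e}}(v^\e-\phi^\e)=\max_{\partial_P Q_r^\e}(v^\e-\phi^\e)$, with $\partial_P$ the parabolic boundary. Letting $\e\to0$ and using $v^\e\to u$ and $\e\hat\chi_P\to0$ uniformly, together with conditions (1)--(2) (by which the values of $v^\e-\phi^\e$ on $\overline{Q_r^\e}$, resp.\ on $\partial_P Q_r^\e$, control those of $u-\phi$ on $\overline{Q_r}$, resp.\ on its parabolic boundary $\partial_P Q_r$), we obtain $(u-\phi)(x_0,t_0)\le\max_{\partial_P Q_r}(u-\phi)$, contradicting the strict maximum at $(x_0,t_0)\notin\partial_P Q_r$. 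Hence $u$ is a subsolution of \eqref{hhj}; the symmetric argument — strict local minimum, residual $-\theta<0$, $\phi^\e$ now a subsolution (using the subsolution property of $\hat\chi_P$), comparison with the supersolution $v^\e$ — shows $u$ is a supersolution, so $u$ solves \eqref{hhj}.

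Finally, $\oH=\a$ is continuous (indeed convex and superlinear), so the comparison principle holds for \eqref{hhj} on $H_1(M,\re)$ with the Lipschitz datum $f$; hence $u$ is its unique Lipschitz viscosity solution, and since every subsequential limit of $\{v^\e\}$ equals this $u$, the whole family converges to $u$ uniformly on compact sets of $H_1(M,\re)\times\,]0,\infty[$. The crux of the proof is the perturbed test function step in the present covering/rescaled setting: transferring the viscosity property of the merely Lipschitz corrector $\hat\chi_P$ to $\phi^\e$ without differentiating $\hat\chi_P$, controlling the error $\widehat{\eta_{P-Q_1}}$ uniformly over the noncompact $\hM$, and — since $F_\e^{-1}(B(x_0,r))$ has $\hd$‑diameter of order $1/\e$ — performing the limit $\e\to0$ through conditions (1)--(2) rather than over a fixed compact exhaustion. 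The equi‑Lipschitz estimate, which rests on the comparison principle on the complete noncompact manifold $\hM$, and the concluding uniqueness step are comparatively routine.
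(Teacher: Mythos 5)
Your core argument is the same as the paper's: the perturbed test function $\phi^\e=\phi\circ F_\e+\e\,\hat\chi_P$ built from a corrector of the cell problem $H(x,g(P)+Dw)=\oH(P)$, the claim that $\phi^\e$ is a viscosity supersolution with residual $\ge\th/2$ near the strict maximum (including the observation that the time-derivative of the test function at the touching point equals $\partial_t\phi(F_\e(y_1),t_1)$ and that the momentum there is bounded by $\Lip(\hat\chi_P)$), comparison of $v^\e$ with $\phi^\e$ on the preimage of a small ball, passage to the limit through the convergence of spaces, and uniqueness for the homogenized equation. That part is sound. The genuine gaps are in the two supporting pillars you label ``comparatively routine''. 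First, the equi-Lipschitz bound: the assertion that ``the comparison principle on $\hM$ propagates the Lipschitz constant of the datum'' with $L_0=\max(K,C_H)$ has no proof and no available mechanism as stated — the classical propagation argument compares $v^\e$ with its spatial translates, which does not exist when $H$ depends on $x$ and $\hM$ carries no translations. A PDE repair does exist (first bound $|\partial_t v^\e|\le C_K$ by comparison with $f_\e\pm C_K t$, noting that for convex $H$ an a.e.\ Lipschitz subsolution is a viscosity subsolution, then use coercivity and the a.e.\ equation to bound $\tfrac1\e|\partial_y v^\e|$), but it already requires a comparison theorem for Lipschitz sub/supersolutions on the whole noncompact cover $\hM$, i.e.\ precisely the second missing ingredient. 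The paper avoids this circularity by proving equi-Lipschitzness variationally from the Lax--Oleinik formula (energy bounds on minimizers, Proposition~\ref{eqlip}) and separately proving uniqueness of Lipschitz solutions on $\hM$ (Section~\ref{Suniqueness}) so that $v^\e$ is the Lax solution.

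Second, the comparison principles you invoke are not off-the-shelf here. On the local region: $\ov{Q_r^\e}=(F_\e\times id)^{-1}(\ov{B_r})\times[\,\cdot\,]$ is compact only because $F_\e$ is proper, a nontrivial fact the paper proves (Proposition~2.4 and Corollary~\ref{c1.3}) using the structure of $H_1(M,\Z)$. More seriously, the doubling-of-variables proof of comparison needs a structure condition of the type $|H_\e(x,p)-H_\e(y,p)|\le\omega\big((1+|p|^2/\e^2)\,d(x,y)\big)$ with controlled constants; a Tonelli Hamiltonian, being merely $C^2$ and superlinear, need not satisfy it globally, and the factor $\tfrac1\e$ makes the dependence on $\e$ delicate. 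The paper's route — justified because all functions in play are Lipschitz, so only a compact set of momenta matters — is to replace $H$ by a Hamiltonian quadratic at infinity (Lemma~\ref{Llim}, using $\oH(c)=\inf_{[\om]=c}\sup_x H(x,\om)$ to see the homogenized problem is unchanged) and to prove the comparison theorem for that class (Theorem~\ref{tcomp}, Proposition~\ref{Pun}). Your proposal uses these comparison statements at least three times (equi-Lipschitz, the $Q_r^\e$ step, and implicitly the well-posedness of $v^\e$ itself), so omitting their justification is a real gap rather than a detail; with those two pillars supplied, the rest of your argument goes through and coincides with the paper's proof.
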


    Recall from \cite{CIPP} Corollary~1, that Mather's alpha function is
    $\a:H^1(M,\re)\to\re$
    $$
    \a(c) = \inf_{[\om]=c} \sup_{x\in M} H(x,\om(x)),
    $$
    where $\om$ are closed 1-forms of cohomology $c$. Mather's alpha function is also the
    convex dual of Mather's beta function, and it can be written in terms of minimizing invariant measures,
    see Mather~\cite{Mat5}, Theorem~1. 
    
    The same proof applies to (abelian) subcovers of $\hM$
    with the statement given in \cite{CIS}, Theorem~1.4.
    These may be more intuitive than $\hM$. For example,
    let $M=\T^2\#\T^2$ be the bi-torus, the compact oriented surface of genus 2.
    Cut $M$ along the ``ears'' to get a surface $F=(\SS^1\times I)\#(\SS^1\times I)$
    homeomorphic to the boundary of a tubular neighborhood of a cross in $\re^3$,
    which is a fundamental domain of an abelian cover $\M$ of $M$ with group of deck
    transformations $\Z^2$.
    The cover $\M$ is homeomorphic to the boundary of a small tubular neighborhood of
    $$
    (\re\times \Z\times\{0\}) \cup (\Z\times\re\times\{0\})
    $$ 
    in $\re^3$. Nevertheless the maximal abelian cover $\hM$
    has group of deck transformations $\Z^4=H_1(M,\Z)$.
    In this example $\M_\e=(\M,\e\, d_\M)$ converges to $\re^2$.
    We see that the limit process of $\M_\e$ destroys all the differentiable
    structure of $\M$. Nevertheless there is the homogenization limit because
    the Hamilton-Jacobi equation responds to a large scale variational principle.

    \subsection{The cell problem.}\quad

    The cell problem in the torus $\T^n$ is written as
    \begin{equation}\label{cellT}
    H(x,P+Dw(x)) = \oH(P), \qquad 
    x\in\T^n,\quad P\in\re^n, \quad w:\T^n\to\re.
    \end{equation}
    The real number $\oH(P)=\a(P)$ is the unique constant for which
    equation \eqref{cellT} has a viscosity solution. Thus the cell problem consists
    in finding a ``correcting'' exact form $d_xw$ such that the form $P+d_xw$
    is a solution of the Hamilton-Jacobi equation in the cohomology class $P$.
    
    In the manifold case we don't have a standard canonical basis for $H^1(M,\re)$.
    So we use the same basis $c_i=[\om_i]$ used for the isomorphism 
    $H_1(M,\re)=\H^1(M,\re)^*$ and the definition of $G$ in \eqref{dfg}.
    Let $\Om^1(M)$ be the space of smooth closed 1-forms in $M$.
    Let $g:H^1(M,\re)\to \Om^1(M)$ be 
    $$
    g\Big (\tsum_i \,p_i c_i\Big) =\tsum_i\, p_i \om_i.
    $$
    Writing $c_i\in H^1(M,\re)=H_1(M,\re)^*$, the definition~\eqref{dfg}
    of $G:\hM\to H_1(M,\re)$ is equivalent to 
    $$
    c_i \cdot G(x) =\int_{x_0}^x \whom_i.
    $$
    Then the i-th coordinate of the derivative $DG$ applied on the vector 
    $h\in T_x\hM$ is
    $$
    (c_i\cdot DG(x))(h) = \whom_i(x)\cdot h.
    $$
    This implies that
    \begin{equation}\label{NG}
    \sup_{x\in\hM}\lV DG(x)\rV\le \max_i \;\max_{x\in M}\lV \om_i(x)\rV.
    \end{equation}
     For a cohomology class $P=\sum_i p_i c_i\in H^1(M,\re)$ we have that
    \begin{align}
    P\cdot DG(x) &=\sum p_i \;c_i\cdot DG(x)
    =\sum p_i\,\whom_i(x)
    \notag\\
    &=\sum p_i \, \om_i(\pi(x))\circ d\pi 
    = g(P)(\pi x)\circ d\pi
    =\pi^* (g(P))(x),
    \label{pdg}
    \end{align}
    where $\pi:\hM\to M$ is the covering map.
    
    Given $P\in H^1(M,\re)$ our cell problem will be
    \begin{equation}\label{cell}
    H\big (x, g(P)+D w(x)\big) =\oH(P),
    \quad w:M\to\re.
    \end{equation}
    Using \eqref{pdg}, a solution of the cell problem \eqref{cell}
    lifts to $\hM$ as $\hw=w\circ \pi$
    which satisfies
    \begin{equation}\label{lcell}
    \hH(x,P\cdot DG(x)+ D\hw(x)) = \oH(P),
    \qquad x\in \hM.
    \end{equation}
    because $P\cdot DG = \pi^* (g(P))$
    and $D\hw = \pi^*(Dw)$.

\section{The projections $F_\e$.}

The universal cover $\tM$ of $M$ is the set of homotopy classes with fixed endpoints
of the curves in $C^0(([0,1],0),(M,\pi x_0))$. The projection
$p:\tM\to M$ is $p([\ga])=\ga(1)$.
We also name $x_0\in \tM$ the class of the constant curve $\ga(t)\equiv \pi x_0$ in $M$. 

The maximal abelian cover or universal abelian cover
$\hM$ is the subcover given by $\tM/_\sim$, where $\sim$ is the equivalence relation given by
$[\ga]\sim[\eta]$ iff $\ga(1)=\eta(1)$ and $\ga*\eta^{-1}$ is null homologous (in the base $M$),
 or equivalently, 
its homotopy class is in the center $H=[\pi_1,\pi_1]\subset \pi_1 (M,\pi x_0)$ of the fundamental group.
This implies that a continuous curve $\Ga:[0,1]\to \hM$, $\Ga(0)=x_0$
\begin{equation}\label{PH}
\text{ is closed iff   its projection
to the base $M$ has homology zero. }
\end{equation}
Statement \eqref{PH}  holds at  other base points because the subgroup 
$H\lhd \pi_1(M,\pi x_0)$ is normal.
Observe that $H_1(\hM,\Z)$ may be non-trivial, i.e. a closed curve in $\hM$ 
may not be homologous to zero in $\hM$, but its projection to $M$ must be 
homologous to zero in $M$. The projection $\pi:\hM\to M$ is also $\pi([\ga])=\ga(1)$.
The metric on $\hM$ is obtained by lifting  the riemannian metric on $M$ using $\pi$.

Property \eqref{PH} also implies that if a (closed) form $\widehat\om\in \Om^1(\hM)$ is a lift 
of a closed 1-form $\om\in\Om^1(M)$ in the base $M$, i.e. $\widehat\om=\pi^*(\om)$,
then the function $f_\om:\hM\to\re$,
$$
f(x):=\oint_{x_0}^x \widehat\om,
$$
is well defined in $\hM$, i.e. it does not depend on the curve $\hga:x_0\to x$
used to compute the integral, because for another curve $\heta:x_0\to x$
$$
\oint_{\hga-\heta}\widehat \om  = \int_{\pi\circ\hga\,-\,\pi\circ\heta} \om
=0,
$$
because the chain $\pi\circ\hga-\pi\circ\heta$ is homologous to zero.

\bigskip
\begin{Lemma}\label{lLip}
\quad

\hskip .5cm
\parbox{10cm}{
\begin{enumerate}[(a)]
\item The map $G:\hM\to H_1(M,\re)$  in \eqref{dfg}, is Lipschitz.
\item The map $F_\e:(M_\e,d_\e)\to H_1(M,\re)$ in \eqref{Fe}, is Lipschitz.
\end{enumerate}
}
\end{Lemma}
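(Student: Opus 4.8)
The plan is to reduce everything to the estimate \eqref{NG} on the operator norm of $DG$, which already does the real work. For part (a), I would argue that $G$ is locally Lipschitz with a uniform constant and then promote this to a genuine (global) Lipschitz bound using the length structure of the Riemannian metric on $\hM$. Concretely, fix $x,y\in\hM$ and let $\hga:[0,1]\to\hM$ be a smooth curve from $x$ to $y$ whose $\hd$-length is within $\delta$ of $\hd(x,y)$. Along $\hga$ one has $\frac{d}{dt}G(\hga(t)) = DG(\hga(t))\cdot\dot\hga(t)$, so integrating and using \eqref{NG},
$$
\lV G(x)-G(y)\rV \le \int_0^1 \lV DG(\hga(t))\rV\,\lv\dot\hga(t)\rv\,dt
\le \Big(\max_i\max_{z\in M}\lV\om_i(z)\rV\Big)\cdot\big(\hd(x,y)+\delta\big).
$$
Letting $\delta\to 0$ gives the Lipschitz bound with constant $L:=\max_i\max_{z\in M}\lV\om_i(z)\rV$, which is finite since each $\om_i$ is a smooth form on the compact manifold $M$. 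Here I am using that $G$ is $C^1$ — this follows because $G$ is defined by integrating the smooth lifted forms $\whom_i$, and on the universal abelian cover the integral is path-independent by Property \eqref{PH}, as recalled just before the lemma.

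For part (b), recall $F_\e = \e\,G$ and that the metric on $M_\e$ is $d_\e = \e\,\hd$. Hence for all $x,y\in M_\e$,
$$
\lV F_\e(x)-F_\e(y)\rV = \e\,\lV G(x)-G(y)\rV \le \e\,L\,\hd(x,y) = L\,d_\e(x,y),
$$
so $F_\e$ is Lipschitz with the \emph{same} constant $L$, independent of $\e$. That $\e$-independence is the point worth stating explicitly, since it is exactly the uniformity needed later when comparing the rescaled spaces $M_\e$.

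The only genuine subtlety — and the step I would be most careful with — is the passage from the infinitesimal norm bound \eqref{NG} to a metric Lipschitz estimate on a possibly badly-behaved infinite cover: one must know that $\hd$ is a length metric, i.e. that $\hd(x,y)$ is the infimum of lengths of curves joining $x$ to $y$. This holds here because $\hd$ is by construction the lift of the Riemannian distance on the compact (hence complete) manifold $M$, so $\hM$ with $\hd$ is a complete length space and almost-minimizing rectifiable curves exist between any two points. Once that is in hand the two estimates above are immediate, and no further case analysis is needed; the torsion subtleties mentioned after the first Proposition play no role here, since $G$ only sees the free part $H_1(M,\re)$ and is defined uniformly by the smooth forms $\om_i$.
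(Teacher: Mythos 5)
Your proposal is correct and follows essentially the same route as the paper: the paper also bounds $\lv G(y)\cdot c_i - G(x)\cdot c_i\rv$ by integrating the lifted forms $\whom_i$ along a (nearly) minimizing curve from $x$ to $y$, using path-independence and the uniform bound $\lV\om_i\rV_M$ coming from compactness of $M$, and then obtains (b) by the same scaling $F_\e=\e G$, $d_\e=\e\,\hd$, with an $\e$-independent constant. Your extra care about $\hd$ being a length metric and routing the estimate through $DG$ via \eqref{NG} is just a more explicit version of the same argument.
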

\begin{proof}\quad

(a). For the $i$-th coordinate of $G(x)$ we have that
$$
\big|G(y)\cdot c_i -G(x)\cdot c_i\big|
=\left|\oint_{x_0}^y \widehat\om_i - \oint_{x_0}^x \widehat\om_i\right|
\le \lV \widehat\om_i\rV d(x,y)
\le \lV \om_i\rV_M d(x,y).
$$
Thus $G$ has Lipschitz constant $K:=\max_i \lV \om_i\rV$.

(b).
\begin{align*}
\lv F_\e(x)-F_\e(y)\rv
&= \lv \e\, G(x) -\e\, G(y)\rv
\le \e \, K\, d_{\hM}(x,y)
\\
&\le K\, d_\e(x,y).
\end{align*}

\end{proof}

\begin{Proposition}
The map $F_\e:M_\e\to H_1(M,\re)$ is proper.
\end{Proposition}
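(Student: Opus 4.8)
The plan is to reduce the assertion to the single large-scale inequality~\eqref{1a} furnished by the CIS Proposition and then to invoke the Hopf--Rinow theorem on $\hM$.

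First I would record that, since $M$ is compact, the lifted Riemannian metric $\hd$ is complete, so $(M_\e,d_\e)=(\hM,\e\,\hd)$ is a complete, locally compact length space; by Hopf--Rinow every closed $d_\e$-bounded subset of $M_\e$ is compact. Hence it suffices to check that $F_\e^{-1}(K)$ is closed and $d_\e$-bounded for each compact $K\subset H_1(M,\re)$. Closedness is immediate from the continuity of $F_\e$ (indeed $F_\e$ is Lipschitz by Lemma~\ref{lLip}(b)).

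For boundedness it is enough to treat $K=\bar B(0,R)$, since a closed subset of a compact set is compact. By the CIS Proposition, $\lim_{\e\to 0}(\hM,\e\,d,F_\e)=H_1(M,\re)$, so there are constants $B>0$ and $A_\e>0$ for which the left-hand inequality of~\eqref{1a} holds at the value of $\e$ under consideration. Taking $y=x_0$ and using $F_\e(x_0)=\e\,G(x_0)=0$ from the definition~\eqref{dfg} of $G$, we obtain
\[
B^{-1}\,d_\e(x,x_0)\;\le\;d\big(F_\e(x),0\big)+A_\e\qquad\text{for all }x\in M_\e .
\]
Therefore every $x\in F_\e^{-1}(\bar B(0,R))$ satisfies $d_\e(x,x_0)\le B\,(R+A_\e)$, so $F_\e^{-1}(\bar B(0,R))$ lies in a single $d_\e$-ball and is bounded; being also closed, it is compact, and the case of general $K$ follows by taking $R$ large with $K\subset\bar B(0,R)$.

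No genuine obstacle is expected: the only nontrivial ingredient is~\eqref{1a}, which is already available, and the implication ``closed and bounded $\Rightarrow$ compact'' is exactly the statement that $M_\e$ is a proper metric space, guaranteed by Hopf--Rinow. The one point that deserves a word of care is that~\eqref{1a} is phrased for the family as $\e\downarrow 0$; one simply applies it at the fixed $\e$ in question, with its own constant $A_\e$, which is all that is needed.
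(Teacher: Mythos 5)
Your argument is correct, but it follows a genuinely different route from the paper. You reduce everything to the lower bound in \eqref{1a} supplied by the proposition quoted from \cite[p.~241]{CIS}: with $y=x_0$ and $F_\e(x_0)=0$ this gives $d_\e(x,x_0)\le B\bigl(d(F_\e(x),0)+A_\e\bigr)$, so preimages of bounded sets are bounded, and completeness of the lifted metric plus Hopf--Rinow upgrades ``closed and bounded'' to ``compact''. That is logically sound within this paper, which does state that convergence result and uses \eqref{1a} elsewhere (e.g.\ in Proposition~\ref{limlip} and Lemma~\ref{Llim}); the only cosmetic point is that the convergence statement is phrased for the family as $\e\downarrow 0$, but since $F_\e=\e G$ and $d_\e=\e\,\hd$, properness for one $\e$ is equivalent to properness of $G$ and hence to properness for every $\e$. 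The paper, by contrast, does not lean on the cited convergence at all: it proves properness of $G$ directly, choosing loops $\ga_i$ representing a basis of the free part of $H_1(M,\Z)$, using finiteness of the torsion subgroup and the fact that deck transformations are isometries to replace $x$ by a nearby translate $\on\cdot x_0$, and then inverting the nonsingular change-of-basis matrix $a_{ij}=\oint_{\ga_j}\om_i$ to bound $|\on|$, hence $d_{\hM}(x_0,x)$, in terms of $\lV G(x)\rV$. In effect the paper re-derives (the relevant half of) the lower bound in \eqref{1a} from scratch, keeping this section self-contained and independent of \cite{CIS}, whereas your proof is much shorter but offloads exactly that geometric content onto the citation --- worth being aware of, since the nontrivial part of the cited proposition is essentially the statement you are asked to prove.
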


\begin{proof}\quad

We have to prove that the pre-image of compact sets are compact.
Since $F_\e$ is continuous it is enough to prove that the pre-image of
bounded sets are bounded.
Since $F_\e=\e\, G$ and $d_\e = \e \,d_\hM$, it is enough to prove it for $G$.

By the classification of finitely generated abelian groups there is an isomorphism
\linebreak
$f:H_1(M,\Z)\to \Z^k\oplus \T$, where the torsion
$\T=\Z_{a_1}\oplus\cdots\oplus \Z_{a_q}$ is the finite group
of the elements in $H_1(M,\Z)$ with finite order:
$$
\T=\{\,h\in H_1(M,\Z)\;:\; \exists m\in\na\quad m\cdot h=0\;\}.
$$

Let $\ga_i$ be a closed curve in $M$ based at $\pi x_0\in M$
representing the homology class 
\linebreak
$[\ga_i]=f^{-1}(e_i\oplus 0)$, 
where $e_i=(0,\cdots,1,\cdots, 0)$ are the canonical basis vectors  of $\Z^k$.

On $H_1(M,\re)\approx\re^k$ we are using the coordinates $c_i\cdot h$ and we use the norm
\begin{equation}\label{hnorm}
\lV h\rV = \tsum_{i=1}^k |c_i\cdot h|.
\end{equation}
We have that a  basis for $H_1(M,\re)$ is $([\ga_i] )_{i=1}^k$.
If $h = \sum_{i=1}^k r_j [\ga_j]$ then 
$$
c_i\cdot h = \tsum_{j=1}^k r_j \, (c_i\cdot[\ga_j])
= \tsum_{j=1}^k a_{ij} r_j,
$$
where 
$$
a_{ij} = c_i\cdot[\ga_j]=  \oint_{\ga_j}\om_i.
$$
Since this is a change of basis the matrix $A=[a_{ij}]$ is non-singular.
The norm \eqref{hnorm} of $h$ is
$
\lV h\rV = |A \ov r|.
$

The group of deck transformations of $\hM$
is isomorphic to $H_1(M,\Z)$. We write $x\mapsto \zeta\cdot x$
 the deck transformation associated to $\zeta\in H_1(M,\Z)$.

Suppose that $x\in\hM$ satisfies 
\begin{equation*}
\lV G(x)\rV\le R.
\end{equation*}

Let $\a:[0,1]\to M$ be a minimizing geodesic in $M$ joining $\pi(x)$ to $\pi x_0$.
Let $\ha$ be the lift of $\a$ to $\hM$ with $\ha(0)=x$.
We have that $\ha(1)\in \pi^{-1}(\pi x_0)=H_1(M,\Z)\cdot x_0$ and
$d(x,\ha(1)) \le \ell(\a)= d(\pi(x),\pi x_0)\le \diam M$.
Therefore there are $\on\in\Z^k$ and $\tau\in \T$ such that 
\begin{equation}\label{16n}
d_{\hM}(x,(\on+\tau)\cdot x_0)\le \diam M. 
\end{equation}

Since the torsion group $\T$ is finite we have that 
\begin{equation}\label{Q1}
Q_1 := \diam M + \max\{\, d(x_0,\tau\cdot x_0) : \tau \in \T\,\}<\infty.
\end{equation}
Since the deck transformations are isometries we have that 
\begin{align*}
d_\hM(x, \on\cdot x_0)  
&\le \diam M + (Q_1-\diam M) =Q_1.
\end{align*}
\begin{align}
c_i\cdot G(\on\cdot x_0) &=c_i\cdot G(x) + \oint_x^{\on\cdot x_0} w_i
\notag\\
&\le c_i\cdot G(x) + Q_1 \lV \om_i\rV,
\notag\\
\lV G(\on\cdot x_0)\rV &\le \lV G(x)\rV+ k \,Q_1 \, \max_{1\le i\le k}\lV \om_i\rV
\notag\\
&\le R + B,
\label{nGn}
\end{align}
where $B:= k \,Q_1 \, \max_{1\le i\le k}\lV \om_i\rV$.

Let $\La:[0,1]\to \hM$ be a minimizing geodesic in $\hM$ from $x_0$ to 
$\on\cdot x_0$.
Its projection $\la=\pi\circ\La$ has homology class
$\on  = \sum_i n_i [\ga_i]\in\Z^k\oplus 0 \subset  H_1(M,\Z)$. 
Let $\Ga: = \ga_1^{n_1}*\cdots*\ga_k^{n_k}$,
a loop in $(M,\pi x_0)$ with homology $\on$.
Let $\hGa^{-1}$ be a lift of $\Ga^{-1}$
with $\hGa^{-1}(0)=\on\cdot x_0$.
Since $\La*\hGa^{-1}$ has projection $\la*\Ga^{-1}$ 
homologous to zero, the curve $\La*\Ga^{-1}$ is a closed
loop  in $(\hM,x_0)$.
Then 
\begin{align}
d_{\hM}(x_0,\on\cdot x_0)&=d(x_0,\La(1))=d(\hGa^{-1}(1),\hGa^{-1}(0))
\notag\\
&\le \length(\Ga) \le \tsum_{i=1}^k |n_i|\; \ell(\ga_i).
\label{20}
\end{align}

The coordinates of $G(\on\cdot x_0)$ are 
$$
c_i\cdot G(\on\cdot x_0)
= c_i\cdot \tsum_{j=1}^k n_j [\ga_j]
=\tsum_{j=1}^k a_{ij} \, n_j,
\qquad 
\on = A^{-1} [c_i\cdot G(\on\cdot x_0)]
$$
$$
|\on| \le \lV A^{-1} \rV \lV G(\on\cdot x_0)\rV
\le \lV A^{-1}\rV (R+B),
\qquad\text{using \eqref{nGn}.}
$$
\begin{align*}
d_\hM(x_0,x) 
&\le d(x_0,\on\cdot x_0) + d(\on\cdot x_0, x)
\\
&\le |\on| \max_i \ell(\ga_i) + Q_1.
\\
d_\hM(x_0,x)&\le \lV A^{-1} \rV (R+B) \max_i \ell(\ga_i) +Q_1<\infty.
\end{align*}
Therefore $G^{-1} (\{ h\in H_1(M,\re) :\lV h\rV\le R\})$ is bounded.

\end{proof}

\begin{Corollary}\label{c1.3}\quad

If $\e>0$, $(y_0,t_0)\in M_\e\times\re^+$, $0<r<t_0$
then 
$$
(F_\e\times id)^{-1}\big(\,\ov{B_r(y_0,t_0)}\,\big)\subset M_\e\times \re^+
\quad \text{
is compact.}
$$
\end{Corollary}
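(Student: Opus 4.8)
The plan is to read this off the properness of $F_\e$ established in the previous Proposition; the only additional ingredient is that the hypothesis $r<t_0$ keeps the time coordinate of the ball bounded away from $0$.

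First I would observe that the closed ball $\ov{B_r(y_0,t_0)}$ is contained in a product set $C\times[t_0-r,\,t_0+r]$, where $C$ is a closed ball of radius $r$ in $H_1(M,\re)$ about the first coordinate of the center (equivalently about $F_\e(y_0)$); indeed, for a product metric the projection to each factor does not increase distances, so the first coordinates of points of $\ov{B_r(y_0,t_0)}$ lie within distance $r$ of the center's first coordinate and the time coordinates lie in $[t_0-r,\,t_0+r]$. Since $r<t_0$ we have $t_0-r>0$, hence $[t_0-r,\,t_0+r]\subset\re^+$. Because $F_\e\times id$ acts on the two factors separately,
$$
(F_\e\times id)^{-1}\big(\ov{B_r(y_0,t_0)}\big)\;\subset\;F_\e^{-1}(C)\times[t_0-r,\,t_0+r].
$$

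Next, $C$ is a closed bounded subset of $H_1(M,\re)$, so by the previous Proposition ($F_\e$ is proper) $F_\e^{-1}(C)$ is a compact subset of $M_\e$. Together with the compact interval $[t_0-r,\,t_0+r]\subset\re^+$, the right-hand side of the inclusion is compact in $M_\e\times\re^+$. Finally, $(F_\e\times id)^{-1}\big(\ov{B_r(y_0,t_0)}\big)$ is closed in $M_\e\times\re^+$, since $F_\e\times id$ is continuous and the closed ball is closed; being a closed subset of a compact set, it is itself compact.

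There is no serious obstacle here: the statement is essentially a repackaging of properness of $F_\e$ for the space--time setting. The one point that genuinely matters is that $\re^+$ is the open half-line, so one must use $r<t_0$ rather than $r\le t_0$; otherwise the time interval could touch $0$ and the preimage would only be closed, not compact, in $M_\e\times\re^+$.
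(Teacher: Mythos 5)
Your argument is correct and is exactly the intended derivation: the paper states this corollary without proof as an immediate consequence of the preceding Proposition that $F_\e$ is proper, and your reduction to $F_\e^{-1}(C)\times[t_0-r,t_0+r]$ with the observation that $r<t_0$ keeps the time interval inside $\re^+$ is the natural way to spell that out. Nothing is missing.
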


\bigskip

\section{Lipschitz}

Let $L:TM\to \re$ be the lagrangian of $H$
$$
L(x,v):=\sup\{\,p(v)-H(x,p)\;:\;p\in T_x^*M\,\} \qquad (x,v)\in TM;
$$
and let $L^\e:T\hM\to\re$ be the lagrangian of the hamiltonian $H^\e(x,p)=\hH(x,\tfrac 1\e p)$, 
$$
L^\e(x,v) := \sup \big\{\,p(v) -H^\e(x,p)\;:\;p\in T^*\hM\,\big\},\qquad (x,v)\in T\hM.
$$
Then $L$ and $L^\e$ are also convex and superlinear.
Observe that for $\hL=L\circ d\pi$,
\begin{equation}\label{Le}
L^\e(x,v) = \hL(x,\e v).
\end{equation}
Then the energy function 
$E^\e(x,v) = v \cdot \partial_v L^\e(x,v) -L^\e(x,v)$
is 
\begin{equation}\label{Ee}
E^\e (x,v) = \hE(x,\e v),
\end{equation}
where $E=v\cdot L_v -L$ is the energy function of $L$
and $\hE=E\circ d\pi$.

The solution to equation~\eqref{eve} is given by the Lax formula
\begin{equation}\label{uef}
u^\e(x,t) = \min\left\{ f_\e(y) +\int_0^t L^\e(\ga,\dga) : \ga\in C^1(([0,t],0,t);(M_\e,y,x))\,\right\}.
\end{equation}
In section \S\ref{Suniqueness}, Corollary~\ref{Cuniqueness}, we prove that there is a unique 
Lipshitz viscosity solution to the problem~\eqref{eve} and in Proposition~\ref{eqlip} 
we prove that $u^\e$ is Lipschitz. So we will have that $u^\e=v^\e$ 
is the solution to problem \eqref{eve} in Theorem~\ref{T1}.

Define the Lax-Oleinik operator $\cL^\e_t$ by
\begin{equation}\label{Lax}
(\cL^\e_t f)(x) = \min\left\{ f(y) +\int_0^t L^\e(\ga,\dga) : \ga\in C^1(([0,t],0,t);(M_\e,y,x))\,\right\}.
\end{equation}
The minimum in \eqref{Lax} is always attained by one or more minimizers $\ga$.
Then $\cL_t$ is a semigroup, meaning that $\cL^\e_t\circ\cL^\e_s = \cL^\e_{s+t}$.
 We have that
$$
u^\e(x,t) = (\cL^\e_t f_\e)(x),
$$
and then 
\begin{equation}\label{17}
\forall s \le t \quad \forall x\in M_\e\qquad u^\e(x,t) = \cL^\e_{t-s} u^\e(\cdot,s)(x).
\end{equation}

\bigskip

\begin{Proposition}\label{eqlip}\quad

The functions $u^\e$ in \eqref{uef} are equi-Lipschitz,
i.e. 
\begin{align*}
\exists Q>0 \quad \forall \e>0 \quad \forall (x&,t),(y,s)\in M_\e\times[0,+\infty[
\\
&\lv u^\e(x,t)-u^\e(y,s)\rv
\le Q\big[|t-s|+d_\e(x,y)\big].
\end{align*}
\end{Proposition}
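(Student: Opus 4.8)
The plan is to establish the time-Lipschitz bound and the space-Lipschitz bound separately, using the Lax formula \eqref{uef} together with the semigroup property \eqref{17}, and then combine them via the triangle inequality.

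First I would prove equi-Lipschitz continuity in time. The key observation is that $L^\e(x,v) = \hL(x,\e v)$ by \eqref{Le}, and since $L$ is superlinear there is a constant $C_0 = \max_{x\in M} L(x,0) = \max_{x\in M}(-H(x,0))<\infty$ coming from convexity/superlinearity, giving $L^\e(x,0) = \hL(x,0)\le C_0$ uniformly in $\e$. Taking the constant path $\ga\equiv x$ in \eqref{Lax} shows $u^\e(x,t+h)\le u^\e(x,t) + C_0\, h$ for $h\ge0$ by the semigroup identity \eqref{17}: $u^\e(x,t+h) = \cL^\e_h u^\e(\cdot,t)(x) \le u^\e(x,t) + \int_0^h L^\e(x,0)\,ds \le u^\e(x,t)+C_0 h$. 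For the lower bound I would use superlinearity in the other direction: given $\delta>0$ there is $C_\delta$ with $L^\e(x,v) = \hL(x,\e v)\ge \delta\,\e|v| - C_\delta \ge -C_\delta$, so any minimizing path $\ga$ for $u^\e(x,t+h)$ satisfies $\int_0^{t+h}L^\e \ge \int_0^t L^\e - C_\delta h$ after splitting the integral and using that the restriction of $\ga$ to $[0,t]$ is an admissible competitor for $u^\e(\ga(t),t)$; this yields $u^\e(x,t+h)\ge u^\e(\ga(t),t) - C_\delta h \ge$ something controlled once the space bound is in hand. It is cleaner to first get the space bound and then feed it in here.

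Next, the equi-Lipschitz bound in space. Fix $t>0$ and $x,y\in M_\e$ with $d_\e(x,y)$ small; let $\ga_0:[0,t]\to M_\e$ be a minimizing path realizing $u^\e(y,t)$, so $u^\e(y,t) = f_\e(z) + \int_0^t L^\e(\ga_0,\dot\ga_0)$ where $z=\ga_0(0)$. I would build a competitor for $u^\e(x,t)$ by prepending to (a reparametrized version of) $\ga_0$ a short geodesic segment in $M_\e$ from $z$ to $z$... no — rather, reparametrize: traverse a minimizing geodesic $\sigma$ in $M_\e$ from $x$ to $y$ of length $d_\e(x,y)$ on a tiny time interval, then follow $\ga_0$. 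More precisely, since endpoints are what matter, run $\ga_0$ on $[0,t-h]$ suitably sped up (this costs extra Lagrangian) — but the standard trick is simpler: on $[0,t]$ take the path that on $[0,\tau]$ goes from $z$ to the start of a shifted copy. The cleanest route: use that $L^\e(x,v)=\hL(x,\e v)$ and that $\hL$ is bounded above on the unit tangent ball of $\hM$ (by compactness of $M$), so $\hL(x,\e v)\le C_1$ whenever $|v|\le 1/\e$, i.e. a path in $M_\e$ of $d_\e$-speed $\le 1$ costs at most $C_1$ per unit time. Then define a competitor $\ga$ for $u^\e(x,t)$: follow a unit-$d_\e$-speed geodesic from $x$ to $y$ during time $d_\e(x,y)$, then follow $\ga_0$ (reparametrized to fit the remaining time $t - d_\e(x,y)$, which changes its speed by a bounded factor and hence its cost by a bounded factor — here one needs $d_\e(x,y)<t$, handled by first using the time bound to reduce to small $d_\e(x,y)$). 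This gives $u^\e(x,t)\le u^\e(y,t) + C_2\, d_\e(x,y)$, and symmetrically, so the space-Lipschitz constant is uniform in $\e$.

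The main obstacle I expect is the time-reparametrization estimate in the space bound: compressing $\ga_0$ from duration $t$ into duration $t-d_\e(x,y)$ multiplies its velocity by $t/(t-d_\e(x,y))$, and one must check that $\int L^\e$ along the reparametrized curve exceeds the original by at most a constant times $d_\e(x,y)$ — this uses the a priori bound on the $d_\e$-speed of minimizers (itself following from the equi-Lipschitz estimate one is trying to prove, so the argument must be organized to avoid circularity). The standard fix is a two-step argument: first, for $d_\e(x,y)$ and $|t-s|$ bounded below $t_0/2$ say, get crude bounds using constant-path and bounded-Lagrangian-on-unit-ball competitors as above (no reparametrization needed if one inserts the geodesic segment and merely \emph{delays} the start, accepting a short constant-speed initial piece and a short constant-speed final piece); second, patch the general case using the semigroup property \eqref{17} to reduce to the case of small increments. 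Once the uniform spatial Lipschitz constant $Q_1$ is secured, plugging it into the lower time bound from the first paragraph closes the estimate with $Q = \max\{C_0, C_\delta + (\text{const})\cdot Q_1, Q_1\}$, and the triangle inequality $|u^\e(x,t)-u^\e(y,s)| \le |u^\e(x,t)-u^\e(x,s)| + |u^\e(x,s)-u^\e(y,s)|$ finishes the proof.
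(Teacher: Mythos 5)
Your overall decomposition (space bound first, then time bounds; constant curve for the upper time bound; semigroup plus superlinearity for the lower one) is in the spirit of the paper's Steps 2--3, but there is a genuine gap exactly at the point you flag yourself: the a priori bound on the speed of the minimizers in \eqref{uef}. You assert that such a bound ``follows from the equi-Lipschitz estimate one is trying to prove'' and is therefore circular, and you replace it by a vaguely described two-step patching. In fact it is not circular, and it is the key first step of the paper's proof: comparing with the constant curve gives $u^\e(x,t)\le f_\e(x)+\hL(x,0)\,t$, and then the superlinearity of $L$ together with the \emph{uniform Lipschitz constant $K$ of the initial data $f_\e$} (a hypothesis of Theorem~\ref{T1}, not the conclusion) shows that any candidate curve with energy $E^\e(\ga,\dga)\ge k_0$ pays more in action than it can gain from moving the initial point (the loss there is only $K\,d_\e$), so it cannot beat the constant curve; hence every minimizer satisfies $\e\,|\dga|<a_0$ with $k_0,a_0$ independent of $\e$, $t$ and the endpoints. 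Without this input your space estimate does not close: compressing $\ga_0$ from duration $t$ to $t-d_\e(x,y)$, or discarding its final piece and reconnecting to $x$ by a geodesic, produces an error governed by $L^\e_v$ along $\ga_0$ (equivalently by the reconnection cost), and since $L$ is superlinear this error is uncontrollable unless the $d_\e$-speed of $\ga_0$ is bounded a priori; the ``delay the start / constant-speed pieces'' variant has the same defect, because the endpoint of the discarded piece of $\ga_0$ may be far from $y$, and the superlinear cost of covering that distance in the short remaining time is not refunded by the (at most linear) action you discard. Also, ``using the time bound to reduce to small $d_\e(x,y)$'' is not a valid reduction; the correct remark is that a uniform \emph{local} Lipschitz constant on the length space $(M_\e,d_\e)$ is automatically global.

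A secondary incomplete point is the lower time bound: as written you only retain $L^\e\ge-C_\de$, which gives $u^\e(x,t+h)\ge u^\e(\ga(t),t)-C_\de h$ but leaves $u^\e(\ga(t),t)$ uncontrolled relative to $u^\e(x,t)$, since without the speed bound $d_\e(\ga(t),x)$ need not be $O(h)$. The paper closes this by keeping the superlinearity term with slope equal to the spatial Lipschitz constant $b_2$, so that $b_2\,\e\int|\dga|$ exactly cancels the loss $-b_2\,d_\e(x,\ga(s))$; your ``feed the space bound in'' must be made precise in this way (or via the Step-1 speed bound). Once the a priori speed bound is in place, your surgery argument for the spatial estimate can be repaired and would then be a legitimate alternative to the paper's route, which instead uses Weierstrass' theorem and the first variation formula, exploiting $\partial_v L^\e(x,v)=\e\,\hL_v(x,\e v)$ at speeds $\e|v|\le 4a_0$ to convert $d$ into $d_\e$ uniformly in $\e$.
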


\bigskip
\begin{proof}\quad

{\bf Step 1:}
{\it There are 
\begin{equation}\label{k0a0}
k_0>0, \quad a_0>0
\end{equation}
such that all the minimizers $\ga$ in \eqref{uef}}
satisfy $E^\e(\ga,\dga)<k_0$ and $\e\,|\dga|<a_0$.
\medskip

Taking a constant curve $\ga(s)= x$, $s\in[0,t]$ in \eqref{uef},
we have that 
\begin{equation}\label{ue<}
\forall (x,t) \in M_\e\times\re^+\qquad 
u^\e(x,t) \le f_\e(x) + \hL(x,0) \,t.
\end{equation}

Let $K$ be a uniform Lipschitz constant for the functions $f_\e$:
$$
\forall x,y\in M_\e\qquad 
|f_\e(x)-f_\e(y)|\le K d_\e(x,y) = K\, \e\, d(x,y)
$$
Let $A>0$ be such that 
\begin{equation}\label{A1}
A > K.
\end{equation}
By the superlinearity of $L$ there is $B>0$ such that 
\begin{equation*}
\forall (x,v) \in TM \qquad L(x,v) > 2A\, |v|-B.
\end{equation*}
Therefore
\begin{equation*}
\forall (x,v) \in TM_\e \qquad L^\e(x,v) > 2A\,\e\, |v|-B.
\end{equation*}
Let $a_1>0$ be such that 
\begin{equation}\label{a1}
A\, a_1 - B > \sup_{x\in M} |L(x,0)|.
\end{equation}
Let $k_0>0$ be such that 
$$
(x,v)\in TM, \quad 
E(x,v) \ge k_0 \quad \then \quad |v|> a_1.
$$
Then 
\begin{equation}\label{ek0}
\hskip 0.8cm
E^\e (x, v)=\hE(x,\e v) \ge k_0 \quad \then \quad
\e\, |v| > a_1.
\end{equation}
Let $a_0>0$ be such that 
\begin{equation}\label{fk0}
E(x,v)\le k_0 \quad \then \quad |v|<a_0.
\end{equation}

Given  $y\in M_\e$ if a curve  $\ga\in C^1(([0,t],0,t);(M_\e,y,x))$
has energy $E^\e(\ga,\dga)\ge k_0$,
by \eqref{ek0} it has speed
$$
\forall s\in[0,t] \qquad \e \,|\dga(s)| > a_1.
$$  
For $t>0$ we have that 
\begin{align*}
\int_0^tL^\e(\ga,\dga)&\ge \int_0^t 2 A\, \e |\dga| -B
& &&
\\
&\ge \int _0^t A\, \e |\dga| + \int_0^t A\,\e\left( \frac{\,a_1}\e\right) - B
&&
\\
&\ge A \, \e d(x,y) + (A\, a_1 - B) t
&&
\\
&> A \, d_\e(x,y) + \hL(x,0) \,t
&& {\text{by \eqref{a1}. }}
\end{align*}

\begin{align*}
f_\e(y) +\int_0^t L^\e(\ga,\dga)
&> 
f_\e(x) -K \,d_\e(x,y) +A\, d_\e(x,y) + \hL(x,0)\,t
\\
&=
f_\e(x) +(A-K)\, d_\e(x,y) + \hL(x,0)\,t
\\
&\ge f_\e(x) + \hL(x,0)\, t
\\
&\ge u^\e(x,t) \hskip 3.3cm \text{by \eqref{ue<}.}
\end{align*}
Therefore $\dga$ can not be a minimizer in \eqref{uef}.
Thus any minimizer $\ga$ must 
have energy $E^\e(\ga,\dga)< k_0$.
By \eqref{fk0} it must 
satisfy $\forall s\in[0,t]$ \;$\e\,|\dga(s)|< a_0$.

\bigskip
\bigskip

{\bf Step 2:} {\it The functions $u^\e(x,t)$ are equi-Lipschitz in $x$.}
\medskip

We have to prove that there is a uniform Lipschitz constant 
for the functions 
$$
(M_\e,d_\e) \ni x \longmapsto u^\e(x,t),
\qquad  \e>0, \quad t\ge 0.
$$ 
By hypothesis for $t=0$, the functions $u^\e(x,0):=f_\e(x)$ have uniform Lipschitz constant $K$ in $(M_\e,d_\e)$.
Observe that it is enough to prove that for all $(x,t)\in M_\e\times \re^+$
there is a uniform {\it local} Lipschitz constant for $(M_\e,d_\e)\ni x\mapsto u^\e(x,t)$. 

Since the projection $M$ is compact, by Weiestrass Theorem 
(cf. Mather \cite[p. 175]{Mat5}) 
\begin{equation}\label{weier1}
\forall A>0 \qquad \exists \tau=\tau(\e,A)>0 
\end{equation}
such that if $s_0<t\le s_0+\tau$ and $x,y\in M_\e$ satisfy $d_\e(x,y)< \tfrac 12 A(t-s_0)$ 
then there is a unique minimizer $\zeta$ of the $L^\e$-action in
$C^1(([s_0,t],s_0,t);(M_\e,x,y))$
and moreover
$|\dga|_\e\le A$.

Let $a_0$ be from \eqref{k0a0} in  {\sc step 1} and take $\tau=\tau(\e,4a_0)$,
 from \eqref{weier1}. 
Let $t>0$. Shrinking $\tau$ if necessary we can assume that $2\tau< t$.
Let $s_0=t-\tau$.
Let $x,y\in M_\e$ be such that 
$$
d_\e(x,y)< a_0\,\tau.
$$
Let  $\ga:[0,t]\to M_\e$ be the minimizer in \eqref{uef} such that 
$$
u^\e(x,t) = f_\e(\ga(0))+\int_0^t L^\e(\ga,\dga).
$$
By  {\sc step 1}
$|\dga|<a_0$ and then $d_\e(\ga(s_0),\ga(t))<a_0\,\tau$.
Let $\eta\in C^1(([0,1],0,1); (M,x,y))$ a minimizing geodesic from $x$ to $y$.
In particular
\begin{equation}\label{deta}
\forall s\in[0,1]\qquad |\partial_s\eta(s)|= d(x,y). 
\end{equation}
Let $h:[0,1]\times[s_0,t]\to M$ be the variation of $\ga|_{[s_0,t]}$ such that 
for every $s\in[0,1]$, $h(s,\cdot)$ is the minimizer joining
$\ga(s_0)$ to $\eta(s)$.
Since $\ga(t)=x$ we have that 
\begin{equation*}\label{2a0t}
d(\ga(s_0),\eta(s))\le d(\ga(s_0),\ga(t))+d(x,y)<2a_0\,\tau.
\end{equation*}
Then from~\eqref{weier1}
\begin{equation}\label{4a0}
\forall(\si,\tau)\in[0,1]\times[s_0,t]\qquad
\e\,|\partial_\tau h(\si,\tau)|\le 4 a_0.
\end{equation}
Let 
\begin{equation}\label{30b}
b_2 > K \ge  \sup_\e \Lip(f_\e,d_\e).
\end{equation}
 be such that 
\begin{equation}
(x,w)\in TM \qquad
|w|\le 4a_0
\quad \then \quad
| L_v (x,w)| < b_2.
\label{b2}
\end{equation}
Observe that $b_2$ is independent of $(\e,t)$. From~\eqref{4a0} and~\eqref{b2} we get that
\begin{align}\label{lvb2}
\forall (\si,\tau)\in[0,1]\times[s_0,t]\qquad
\big| \hL_v\big(h(\si,\tau),\e\,\partial_\tau h(\si,\tau)\big)\big| < b_2.
\end{align}
Observe that 
\begin{align*}
u^\e(h(\si,t),t) 
&\le u^\e(\ga(s_0),s_0) +\int_{s_0}^t
L^\e\big(h(\si,\tau),\partial_\tau h(\si,\tau)\big)\;d\tau,
\\
u^\e(x,t) 
&= u^\e(\ga(s_0),s_0)
+\int_{s_0}^t
L^\e\big( h(0,\tau),\partial_\tau h(0,\tau)\big) \;d\tau.
\end{align*}

\begin{align}
u^\e(\eta(\si),t)-u^\e(x,t)
&\le \int_{s_0}^t \big[ L^\e\big( h(\si,\tau),\partial_\tau h(\si,\tau)\big)
-L^\e\big( h(0,\tau),\partial_\tau h(0,\tau)\big)\big]
\; d\tau,
\notag \\
u^\e(y,t)-u^\e(x,t) &\le
\int_0^1 
\int_{s_0}^t (L^\e_x \, \partial_\si h + L^\e_v \, \partial_\si\partial_\tau h) \; d \tau \; d\si .
\label{duu}
\end{align}
Since $\partial_\si\partial_\tau h=\partial_\tau\partial_\si h$, integrating by parts we get
\begin{align*}
\int_{s_0}^t (L^\e_x \, \partial_\si h + L^\e_v \, \partial_\si\partial_\tau h) \; d \tau \; d\si 
&=
\int_{s_0}^t (L^\e_x   - \tfrac d{d\tau}L^\e_v )\,  \, \partial_\si h 
+ \tfrac d{d\tau}( L^\e_v 
\partial_\si h) \; d \tau .
\end{align*}
Since $\tau\mapsto h(\si,\tau)$ is a solution of the Euler-Lagrange equation, the first term is zero.
The integral of the second term is 
\begin{align*}
\int_{s_0}^t (L^\e_x \, \partial_\si h + L^\e_v \, \partial_\si\partial_\tau h) \; d \tau \; d\si 
&=
L^\e_v\,\partial_\si h \Big\vert_{s_0}^t
= L^\e_v\big(h(\si,t), \partial_\tau h(\si,t)\big)\; \partial_\si\eta(\si)
\end{align*}
because $\partial_\si h(\si,s_0)\equiv 0$ and $\partial_\si h(\si,t)=\partial_\si \eta(\si)$.
Then from \eqref{duu}  we have that
\begin{align*}\label{duu1}
u^\e(y,t)-u^\e(x,t) 
&\le
\int_0^1  |L^\e_v\big(h(\si,t), \partial_\tau h(\si,t)\big)|\; |\partial_\si\eta(\si)|
\; d\si.
\end{align*}
Observe that 
$\partial_vL^\e(x,v)=\e\, \hL_v(x,\e v)$,
 then using \eqref{lvb2} and \eqref{deta} 
we get that 
$$
d_\e(x,y)<a_0\tau \quad \then \quad 
u^\e(y,t)-u^\e(x,t) \le \e\, b_2 \; d(x,y)= b_2\; d_\e(x,y).
$$
Since $a_0\tau$ does not depend on $(x,y)$,
we can interchange the roles of $x$ and $y$ and obtain
$$
d_\e(x,y)<a_0\;\tau(\e,a_0,t)
\quad \then \quad
|u^\e(x,t)-u^\e(y,t)|\le b_2\;d_\e(x,y).
$$
This implies that
\begin{equation}\label{ss2}
\forall x,y\in M_\e
\qquad
|u^\e(x,t)-u^\e(y,t)|\le  b_2\;d_\e(x,y).
\end{equation}
Observe that from  \eqref{b2}
the constant 
$b_2$ does not depend on $t$.

\bigskip
{\bf Step 3:} {\it The functions $u^\e(x,t)$ are equi-Lipschitz in $t$.}
\medskip

Comparing with a constant curve in $x$ the definition of $u^\e$ in~\eqref{uef}  implies that 
\begin{equation}\label{34}
x\in M_\e,\quad 
t\ge s 
\quad \then\quad
u^\e(x,t) \le u^\e(x,s) + \hL(x,0)\,(t-s),
\end{equation}
because $L^\e(x,0)=\hL(x,0)$.
Let $c_2>0$ be such that 
$$
\forall (x,v)\in TM \qquad
L(x,v) > b_2 \,|v| -c_2.
$$
Then
\begin{equation}\label{ss3}
\forall (x,v)\in TM_\e \qquad
L^\e(x,v) > b_2\,\e \,|v| -c_2.
\end{equation}
Let $\ga\in C^1([s,t], M)$ be a curve with $\ga(t)=x$ where the minimum in 
\eqref{17} is attained, and let $y:=\ga(s)$. Using \eqref{ss2} and \eqref{ss3},
\begin{align}
u^\e(x,t) &= u^\e(\ga(s),s) +\int_s^t L^\e(\ga,\dga)
\notag\\
&\ge u^\e(x,s) - b_2\, d_\e(x,y) + \int_s^t b_2\, \e\, |\dga| \; d\tau - c_2 \,(t-s)
\notag\\
&\ge u^\e(x,s) -b_2\, d_\e(x,y)+b_2\, \e\, d(x,y) - c_2(t-s)
\notag\\
&\ge u^\e(x,s) - c_2 \,(t-s).
\label{c2}
\end{align}
Taking $c_1:=\max\{c_2,\max_{x\in M}|L(x,0)|\}$
from \eqref{34} and \eqref{c2},
$t\mapsto u^\e(x,t)$ has the same  Lipschitz
constant $c_1$ for every $(\e,x)$.

\end{proof}

\section{Equicontinuity.}

We need the Arzel\`a-Ascoli Theorem in our context.

Let $(M_n,d_n)_{n\in\na}$ be a sequence of complete metric spaces. Suppose that 
$\lim_n(M_n,d_n)=(M,d)$ and that $(M,d)$ is separable.
We say that a family of functions $f_n:(M_n,d_n)\to \re$
is {\it bounded on compact sets} if for every compact set $K\subset M$
the set $\cup_{n\in\na} f_n(F_n^{-1}(K))$ is bounded.
Recall that the uniform convergence on compact sets is defined in~\eqref{ucs}.

We say that the family $f_n$ is {\it equicontinuous} if
$$
\forall \e>0 \quad \exists \de>0
\quad\forall n\in \na
\quad d_n(x,y)<\de \quad\then\quad
|f_n(x)-f_n(y)|<\e.
$$

\begin{Prop}\quad\label{Paa}

If $\lim_n (M_n,d_n,F_n)=(M,d)$, with $(M,d)$  separable 
 and $f_n:(M_n,d_n)\to\re$ is equicontinuous 
and bounded on compact sets,
then there is a subsequence $n(k)$ such that 
$f_{n(k)}$ converges uniformly on compact sets
to a continuous function on $M$. 
\end{Prop}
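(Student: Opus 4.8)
The plan is to imitate the classical diagonal-argument proof of Arzelà–Ascoli, replacing ``evaluation at a countable dense set'' with the appropriate analog in the limit space $(M,d)$ and transporting points back and forth through the maps $F_n$. First I would fix, using separability of $(M,d)$, a countable dense set $\{y_j\}_{j\in\na}\subset M$. For each $j$, condition~\eqref{1b} in the definition of $\lim_n(M_n,d_n,F_n)=(M,d)$ supplies points $x_n^j\in M_n$ with $\lim_n F_n(x_n^j)=y_j$. Next I would produce, by a standard diagonal extraction, a subsequence $n(k)$ such that $\lim_k f_{n(k)}(x_{n(k)}^j)$ exists for every $j$: this uses that the family is bounded on compact sets, so for each fixed $j$ the real sequence $(f_n(x_n^j))_n$ is bounded (the points $F_n(x_n^j)$ eventually lie in a fixed compact neighborhood of $y_j$), and then Bolzano–Weierstrass plus diagonalization gives convergence along a common subsequence for all $j$ simultaneously.

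After extracting $n(k)$, I would show that $f_{n(k)}$ is uniformly Cauchy on compact sets in the sense of~\eqref{ucs}. Fix a compact $K\subset M$ and $\e>0$. Let $\de>0$ be the equicontinuity constant for $\e/3$ (uniform in $n$), and let $B=\sup_n A_n$, so that by~\eqref{1a} distances in $M_n$ are comparable to distances in $M$ up to the multiplicative constant $B$ and the additive error $A_n$. Cover $K$ (or a slightly enlarged compact set) by finitely many balls of small radius centered at points of $\{y_j\}$; call the chosen centers $y_{j_1},\dots,y_{j_N}$. The key estimate is: for $x\in F_n^{-1}(K)$, choose an index $j_\ell$ with $F_n(x)$ close to $y_{j_\ell}$; then $d(F_n(x),F_n(x_n^{j_\ell}))$ is small, so by~\eqref{1a} (rearranged) $d_n(x,x_n^{j_\ell})\le B\big(d(F_n(x),F_n(x_n^{j_\ell}))+A_n\big)$ is small once $n$ is large, hence $|f_n(x)-f_n(x_n^{j_\ell})|<\e/3$ by equicontinuity. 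Combining this at two large indices $n(k),n(k')$ together with $|f_{n(k)}(x_{n(k)}^{j_\ell})-f_{n(k')}(x_{n(k')}^{j_\ell})|<\e/3$ for $k,k'$ large (convergence of the diagonal sequence, finitely many $\ell$) gives $\sup_{x\in F_{n(k)}^{-1}(K)}|f_{n(k)}(x)-\text{(transported value)}|$ small; more precisely it shows $\{f_{n(k)}\}$ is uniformly Cauchy on compacts, so there is a limit function $f$ on $M$ and $\lim_k f_{n(k)}=f$ uniformly on compact sets.

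Finally I would check that the limit $f$ is continuous: given $y,y'\in M$ with $d(y,y')$ small, pick $x_k, x_k'\in M_{n(k)}$ with $F_{n(k)}(x_k)\to y$, $F_{n(k)}(x_k')\to y'$ (using~\eqref{1b}); then for large $k$ we have $d(F_{n(k)}(x_k),F_{n(k)}(x_k'))$ small, hence $d_{n(k)}(x_k,x_k')$ small by~\eqref{1a}, hence $|f_{n(k)}(x_k)-f_{n(k)}(x_k')|$ small by equicontinuity, and passing to the limit (using~\eqref{ucs} to identify $\lim_k f_{n(k)}(x_k)=f(y)$ and similarly for $y'$) gives a modulus of continuity for $f$. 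The main obstacle, and the only place requiring genuine care rather than routine bookkeeping, is controlling the additive errors $A_n$ uniformly while passing between $M_n$ and $M$: one must arrange all the ``smallness'' thresholds so that the $A_n$ terms — which tend to $0$ but are nonzero — get absorbed, which is why the compact set $K$ is enlarged slightly and why the estimates are only claimed for $n$ (equivalently $k$) large. Once the $A_n$'s are handled this way, the rest is the classical argument verbatim.
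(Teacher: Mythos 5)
Your proposal is correct and follows essentially the same route as the paper: a countable dense subset of $(M,d)$, points transported back via condition~\eqref{1b}, a diagonal extraction justified by boundedness on compact sets, and equicontinuity combined with~\eqref{1a} (with the additive errors $A_n\to 0$ absorbed for $n$ large) to move estimates between $M_n$ and $M$; the paper only organizes the final step differently, first proving the limit is uniformly continuous on the dense set and extending by continuity, then obtaining uniform convergence on compacta by contradiction instead of your direct finite-cover $3\varepsilon$ argument. One small correction: $(M,d)$ is not assumed locally compact, so rather than a ``fixed compact neighborhood of $y_j$'' (or a ``slightly enlarged'' compact set) you should argue, as the paper does, that the convergent sequence $\{F_n(x^n_j)\}_n$ together with its limit $y_j$ is itself compact, which is all that ``bounded on compact sets'' needs.
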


\begin{proof}
Let $\D=\{x_n\}_{n\in\na}$ be a countable dense subset of $M$.
By~\eqref{1b} for all $n\in\na$ there are $x^m_n\in M_m$ such that $\lim_m F_m(x^m_n)=x_n$.
Then for each $n$, the set $\cup_m \{F_m(x^m_n)\}\cup\{x_n\}$ is compact and hence the
sequence $\{ f_m(x^m_n)\}_m$ is bounded. 
Let $m(1,k)\in\na$ be an increasing sequence such that 
$$
\exists \lim_k f_{m(1,k)}(x^{m(1,k)}_1) =: a_1.
$$
Let $m(2,k)$ be an increasing subsequence of $m(1,k)$ such that 
$$
\exists \lim_k f_{m(2,k)}(x^{m(2,k)}_2) =: a_2.
$$
Inductively, let $m(\ell,k)$ be an increasing subsequence of $m(\ell-1,k)$ such that 
\begin{equation}\label{fmL}
\exists \lim_k f_{m(\ell,k)}(x^{m(\ell,k)}_\ell) =: a_\ell.
\end{equation}
Then $m(k,k)\in\{ m(\ell,k)\}_{k\ge \ell}$ and hence $\{m(k,k)\}_{k\in\na}$ is a subsequence of 
all the sequences $m(\ell,\cdot)$. In particular 
\begin{equation}\label{akkn}
\forall n\in\na \qquad\lim_k f_{m(k,k)}(x^{m(k,k)}_n) = a_n.
\end{equation}

Define $f:\D\to \re$ by $f(x_n):=a_n$. We first prove that $f|_\D$ 
is uniformly continuous. Given $\e>0$ let $\de>0$ be such that
\begin{equation}\label{equic}
\forall n\in\na\qquad
d_n(x,y)<\de \quad \then\quad |f_n(x)-f_n(y)|< \e.
\end{equation}

Suppose that $x_p,\,x_q\in \D$ and $d(x_p,x_q)<\de$.
From~\eqref{1a} we have that 
\begin{align}
B^{-1}\, d_m(x^m_p,x^m_q)  - A_m
&\le d(F_m(x^m_p),F_m(x^m_q))
\le B\, d(x^m_p,x^m_q).
\notag\\
d_m(x^m_p,x^m_q)&\le A_m B + B\, d(F_m(x^m_p),F_m(x^m_q))
\notag\\
&\le 2 A_m B + B\, d(x_p,x_q)    \qquad 
\text{for $m\ge m_1$ large enough.}
\label{dm2am}
\end{align}
There is $m_2\ge m_1$ such that 
\begin{equation}\label{m2l}
\forall m\ge m_2 \qquad A_m <\frac \de{8B}.
\end{equation}
If $d(x_p,x_q)<\tfrac 1{2B}\de$
then
\begin{align*}
2A_m B+ B \, d(x_p,x_q) 
&< \tfrac 14 \de +\tfrac 12 \de < \de.
\end{align*}
From \eqref{dm2am} we obtain $d(x^m_p,x^m_q)<\de$ for $m\ge m_2$.
Then \eqref{equic} implies that 
\begin{equation}\label{fmxmp}
\forall m\ge m_2\qquad
|f_m(x^m_p)-f_m(x^m_q)|<\e.
\end{equation}
From~\eqref{akkn} and~\eqref{fmxmp} we get that
\begin{align*}
|f(x_p)-f(x_q)| &=|a_p-a_q| 
\\
&=\lim_k |f_{m(k,k)}(x^{m(k,k)}_p)-f_{m(k,k)}(x^{m(k,k)}_q)|
\\
&\le \e.
\end{align*}
Therefore $f|_\D$ is uniformly continuous
and thus we can extend $f$ to $M$ uniquely by continuity.
In particular $f$ is continuous on $M$.

Now we prove that $\lim_k f_{m(k,k)} = f$ uniformly on compact sets.
Suppose it is false. Then there is a compact subset $\K\subset M$ and $\th>0$ and 
a subsequence $m_k$ of $\{m(k,k)\}_{k\in \na}$ such that 
$$
\forall k\in\na\qquad
\sup_{F_{m_k}(y)\in\K} \big| f_{m_k}(y) -f(F_{m_k}(y))\big|>\th.
$$
Then there are $y_k\in M_{m_k}$ such that $F_{m_k}(y_k)\in\K$ and
\begin{equation}\label{e12}
\forall k\in\na \qquad
|f_{m_k}(y_k)-f(F_{m_k}(y_k))|>\th.
\end{equation}
Since $\K$ is compact by extracting a subsequence of $m_k$ we can assume that
$y:=\lim_k F_{m_k}(y_k)$ exists. By the continuity of $f$ we have that 
\begin{equation}\label{e11}
\lim_k |f(y)- f(F_{m_k}(y_k))|=0.
\end{equation}
Let $x_{n}\in\D$ be such that $\lim_n x_{n} = y$. By the continuity of $f$
we have that
 \begin{equation}\label{xn1n}
 \lim_n|f(x_n)-f(y)|=0.
 \end{equation}
By the construction above we have that
\begin{equation}\label{eqan}
\lim_k f_{m_k}(x^{m_k}_n) = f(x_n)= a_n.
\end{equation}
From~\eqref{1a} we have that 
\begin{align*}
d_{m_k}(y_k,x^{m_k}_n) 
&\le A_{m_k} B + B\, d(F_{m_k}(y_k), F_{m_k}(x_n^{m_k})).
\end{align*}
In the construction above we have that 
$\lim_k F_{m_k}(x_n^{m_k})=x_n$, $\lim_k F_{m_k}(y_k)=y$, therefore
$$
\limsup_k d_{m_k}(y_k,x^{m_k}_n) \le B \; d(y, x_n)\xrightarrow{\;n\;} 0.
$$
By the equicontinuity of $\{f_{m_k}\}_{k\in\na}$ the last inequality implies that
\begin{equation}\label{eq15}
\lim_n \limsup_k |f_{m_k}(y_k)-f_{m_k}(x^{m_k}_n)|=0.
\end{equation}

We have that
\begin{align*}
|f_{m_k}(y_k) - f(F_{m_k}(y_k))|
\le | f_{m_k}(&y_k) - f_{m_k}(x^{m_k}_n)|+|f_{m_k}(x^{m_k}_n) - f(x_n)|+
\\
&+|f(x_n) - f(y)| +
|f(y)-f(F_{m_k}(y_k))|.
\end{align*}

Using~\eqref{eqan}, \eqref{e11},
\begin{align*}
\limsup_k |f_{m_k}(y_k) &- f(F_{m_k}(y_k))|\le
\\
&\le \limsup_k  | f_{m_k}(y_k) - f_{m_k}(x^{m_k}_n)| + 0
+|f(x_n) - f(y)| + 0.
\end{align*}
Taking $\lim_n$ in the right hand side and using \eqref{eq15} and \eqref{xn1n}
we obtain
$$
\limsup_k |f_{m_k}(y_k) - f(F_{m_k}(y_k)) |=0,
$$
which contradicts \eqref{e12}.

\end{proof}

\begin{Proposition}\label{limlip}\quad

Suppose that $\lim_n (M_n,d_n,F_n)=(M,d)$, $(M,d)$ is separable, 
 $f_n:(M_n,d_n)\to\re$ is equi-Lipschitz
and $\lim_n f_n = f$ uniformly compact sets.
Then $f$ is Lipschitz.
\end{Proposition}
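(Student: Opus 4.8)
The plan is to establish directly the Lipschitz bound $|f(x)-f(y)|\le KB\,d(x,y)$ for all $x,y\in M$, where $K$ is a common Lipschitz constant of the family $\{f_n\}$ and $B$ is the constant from condition~\eqref{1a}. Fix $x,y\in M$. Using the surjectivity condition~\eqref{1b}, choose sequences $a_n,b_n\in M_n$ with $F_n(a_n)\to x$ and $F_n(b_n)\to y$. The metric distortion estimate~\eqref{1a} gives
\[
d_n(a_n,b_n)\le B\,d\big(F_n(a_n),F_n(b_n)\big)+B\,A_n,
\]
and since $A_n\to 0$ and $d$ is continuous, $\limsup_n d_n(a_n,b_n)\le B\,d(x,y)$. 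Applying the equi-Lipschitz hypothesis, $|f_n(a_n)-f_n(b_n)|\le K\,d_n(a_n,b_n)$, whence $\limsup_n|f_n(a_n)-f_n(b_n)|\le KB\,d(x,y)$.

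The remaining step is to identify the limits of $f_n(a_n)$ and $f_n(b_n)$. The set $\cK_x:=\{F_n(a_n):n\in\na\}\cup\{x\}$ is compact (a convergent sequence together with its limit), so the uniform convergence on compact sets~\eqref{ucs} gives $|f_n(a_n)-f(F_n(a_n))|\to 0$; combining this with the continuity of $f$ and $F_n(a_n)\to x$ yields $f_n(a_n)\to f(x)$, and similarly $f_n(b_n)\to f(y)$. Therefore
\[
|f(x)-f(y)|=\lim_n|f_n(a_n)-f_n(b_n)|\le KB\,d(x,y),
\]
and since $x,y\in M$ were arbitrary, $f$ is Lipschitz with constant $KB$.

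There is no serious obstacle in this argument; the only point requiring attention is the passage from $f_n(a_n)$ to $f(x)$, which must be routed through the compact set $\cK_x$ in order to invoke~\eqref{ucs}, and then through the continuity of $f$ (which is part of the meaning of $\lim_n f_n=f$ on compact sets). Note that, unlike in Proposition~\ref{Paa}, separability of $(M,d)$ plays no role in this statement.
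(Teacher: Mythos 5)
Your proof is correct and follows essentially the same route as the paper: approximate $x,y$ by points $a_n,b_n\in M_n$ via~\eqref{1b}, bound $|f_n(a_n)-f_n(b_n)|$ by the equi-Lipschitz constant times $d_n(a_n,b_n)$ and then by $B\,d(F_n(a_n),F_n(b_n))+BA_n$ via~\eqref{1a}, and identify the limits $f_n(a_n)\to f(x)$, $f_n(b_n)\to f(y)$ through~\eqref{ucs} on a compact set together with the continuity of $f$. The only cosmetic differences are that the paper takes a compact neighborhood $\K$ of $\{x,y\}$ rather than your sequence-plus-limit compact set, and obtains continuity of $f$ by citing Proposition~\ref{Paa} (hence its separability hypothesis), whereas you correctly observe that continuity is already built into the definition of $\lim_n f_n=f$ so separability is not needed.
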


\begin{proof}
By Proposition~\ref{Paa}, the function $f$ is continuous.
Given $x,y\in M$, let $\K\subset M$ be a compact set such that  
 $x,y\in \interior\K$.  Let $x_n,y_n\in M_n$ be such that 
$\lim_n F_n(x_n)=x$, $\lim_n F_n(y_n)=y$ and $F_n(x_n),F_n(y_n)\in\K$.
We have that 
\begin{align}
|f(x)-f(y)| \le 
|f(x)&-f(F_n(x_n))|+|f(F_n(x_n))-f_n(x_n)|+|f_n(x_n)-f_n(y_n)|+
\notag\\
&+|f(y_n)-f(F_n(y_n))|+|f(F(y_n))-f(y)|.
\label{1st}
\end{align}
By the continuity of $f$ and the uniform convergence~\eqref{ucs} of $f_n$ on $\K$,
only the third term in the right hand side may not converge to zero.
Since the family $\{f_n\}$ is equi-Lipschitz, for the third term we have 
\begin{align}
|f_n(x_n)-f_n(y_n)| &\le Q \, d_n(x_n,y_n) 
\notag\\
&\le Q \, B \, d(F_n(x_n),F_n(y_n)) + Q \, B A_n
\qquad\text{using~\eqref{1a}.}
\label{3rd}
\end{align} 
Letting $n\to\infty$ in \eqref{1st} and \eqref{3rd} we get
$$
|f(x)-f(y)|\le QB\, d(x,y).
$$

\end{proof}

\color{black}

 \section{Comparison Theorem.}

We say that the hamiltonian $H:T^*M\to\re$ is {\it quadratic at infinity} if there is a riemannian metric on $M$, and $R>0$ such that if $x\in M$ and $|p|_x>R$ then
$$
H(x,p) = \tfrac 12 |p|_x^2+ p(\xi(x)) +V(x),
$$
where $\xi(x)$ is a smooth vector field on $M$ and 
$V:M\to\re$ is a smooth function.

\begin{Theorem}[Comparison Theorem]\label{tcomp}\quad

Suppose that $H:T^*M\to\re$ is quadratic at infinity. 
Let $H_\e:T^*\hM\to\re$ be 
$$
H_\e(x,p)=\hH(x,\tfrac 1\e p).
$$
Let  $\Om\subset ]0,T[\times \hM$ be a compact set.
Consider
\begin{equation}\label{HJ2}
\partial_t u +H_\e(x,\partial_x u) = 0   \qquad x\in \hM.
\end{equation}
Suppose that $u$ is a Lipschitz viscosity subsolution and 
$v$ is a Lipschitz viscosity supersolution of 
\eqref{HJ2} such that $u\le v$ on $\partial \Om$. 
 Then 
$u\le v$ on $\Om$.

\end{Theorem}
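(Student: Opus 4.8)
The plan is to prove the Comparison Theorem by the classical doubling-of-variables technique of Crandall--Lions, adapted to our setting. Since $\Om\subset\,]0,T[\,\times\hM$ is compact and $u,v$ are Lipschitz, we may assume (translating $u$ by a constant and arguing by contradiction) that $\max_{\Om}(u-v)=:\delta>0$. For $\alpha>0$ large, consider the penalized function on $\Om\times\Om$
\[
\Phi_\alpha(x,t,y,s):=u(x,t)-v(y,s)-\tfrac{\alpha}{2}\bigl(d_{\hM}(x,y)^2+|t-s|^2\bigr),
\]
and let $(x_\alpha,t_\alpha,y_\alpha,s_\alpha)$ be a maximizer (which exists by compactness of $\Om$ and continuity). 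The standard estimates give that $\alpha\,d_{\hM}(x_\alpha,y_\alpha)^2+\alpha|t_\alpha-s_\alpha|^2\to 0$, that $d_{\hM}(x_\alpha,y_\alpha)+|t_\alpha-s_\alpha|\to 0$, and that $\Phi_\alpha(x_\alpha,t_\alpha,y_\alpha,s_\alpha)\to\delta>0$ as $\alpha\to\infty$; in particular for $\alpha$ large the maximizing points lie in the interior of $\Om$ (using $u\le v$ on $\partial\Om$ and continuity), and since $d_{\hM}(x_\alpha,y_\alpha)\to0$ we may take them inside a single coordinate chart where $d_{\hM}$ is comparable to the Euclidean distance and smooth off the diagonal.

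**Next**, I apply the viscosity inequalities. Fixing $(y_\alpha,s_\alpha)$, the map $(x,t)\mapsto u(x,t)-\tfrac{\alpha}{2}(d_{\hM}(x,y_\alpha)^2+|t-s_\alpha|^2)$ has an interior local maximum at $(x_\alpha,t_\alpha)$, so the test function $\phi(x,t):=\tfrac{\alpha}{2}(d_{\hM}(x,y_\alpha)^2+|t-s_\alpha|^2)$, which is $C^1$ near $(x_\alpha,t_\alpha)$, gives
\[
\partial_t\phi(x_\alpha,t_\alpha)+H_\e\bigl(x_\alpha,\partial_x\phi(x_\alpha,t_\alpha)\bigr)\le 0.
\]
Symmetrically, $(y,s)\mapsto v(y,s)+\tfrac{\alpha}{2}(d_{\hM}(x_\alpha,y)^2+|s-t_\alpha|^2)$ has an interior local minimum at $(y_\alpha,s_\alpha)$, yielding with the test function $\psi(y,s):=-\tfrac{\alpha}{2}(d_{\hM}(x_\alpha,y)^2+|s-t_\alpha|^2)$ that
\[
\partial_s\psi(y_\alpha,s_\alpha)+H_\e\bigl(y_\alpha,\partial_y\psi(y_\alpha,s_\alpha)\bigr)\ge 0.
\]
Writing $a:=\alpha(t_\alpha-s_\alpha)$ for the common time-slope and $p:=\alpha\,\partial_x\bigl(\tfrac12 d_{\hM}(\cdot,y_\alpha)^2\bigr)(x_\alpha)$, $q:=\alpha\,\partial_x\bigl(\tfrac12 d_{\hM}(x_\alpha,\cdot)^2\bigr)^{\!*}(y_\alpha)$ for the two spatial slopes, subtracting the two inequalities gives
\[
H_\e(y_\alpha,q)-H_\e(x_\alpha,p)\le 0.
\]
The crux is then to show the left side is bounded below by a quantity tending to $0$, yielding $0\le 0$ with no contradiction unless one is more careful — so in fact one runs the argument with a small strictly-positive correction: replace $u$ by $u-\theta/(T-t)$ or by $u-\theta t$ to make the subsolution strict, which produces a genuine strict inequality $H_\e(y_\alpha,q)-H_\e(x_\alpha,p)\le -\theta<0$, and then the contradiction comes from showing $H_\e(y_\alpha,q)-H_\e(x_\alpha,p)\to 0$.

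**The main obstacle** is precisely this last estimate: controlling $H_\e(y_\alpha,q)-H_\e(x_\alpha,p)$. On the diagonal, $p$ and $q$ are (to leading order) the same covector in the two tangent spaces identified by the chart, with error $O(\alpha\, d_{\hM}(x_\alpha,y_\alpha)^2)=o(1)$ coming from the curvature of the squared-distance function; and $d_{\hM}(x_\alpha,y_\alpha)\to0$. The difficulty is that $H_\e(x,p)=\hH(x,p/\e)$ involves the factor $1/\e$, so a naive bound on $|p-q|$ gets amplified. However, for \emph{fixed} $\e$ the Hamiltonian $H_\e$ is a fixed $C^2$ function, and — this is where ``quadratic at infinity'' enters — on the relevant range of momenta $H_\e$ has at most quadratic growth with locally Lipschitz derivative, so $|H_\e(y_\alpha,q)-H_\e(x_\alpha,p)|\le C_\e\bigl(d_{\hM}(x_\alpha,y_\alpha)(1+|p|)+|p-q|\bigr)$; using $|p|\le \alpha\,d_{\hM}(x_\alpha,y_\alpha)\cdot C$ together with $\alpha\,d_{\hM}(x_\alpha,y_\alpha)^2\to0$ one gets $d_{\hM}(x_\alpha,y_\alpha)\cdot|p|\le C\alpha d_{\hM}(x_\alpha,y_\alpha)^2\to0$, and the curvature term $|p-q|\le C\alpha\,d_{\hM}(x_\alpha,y_\alpha)^2\to0$ as well. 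Hence $H_\e(y_\alpha,q)-H_\e(x_\alpha,p)\to0$, contradicting the strict inequality $\le-\theta$. Letting $\theta\to0$ at the end recovers $u\le v$ on $\Om$. I expect the bookkeeping for the squared-distance Hessian in normal coordinates and the uniformity of the constant $C_\e$ (which may degenerate as $\e\to0$, but that is fine since $\e$ is fixed throughout the Comparison Theorem) to be the only genuinely technical points.
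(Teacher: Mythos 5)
Your overall strategy coincides with the paper's: doubling of variables with a quadratic penalization, a strictness correction in $t$, localization of the maximizer in the interior of $\Om$ using $u\le v$ on $\partial\Om$, the two viscosity inequalities at the doubled maximum, and a contradiction once the difference of Hamiltonians at the two points is shown to be small. The one genuine gap is exactly at the crux, namely your claimed estimate $|H_\e(y_\a,q)-H_\e(x_\a,p)|\le C_\e\big(d(x_\a,y_\a)(1+|p|)+|p-q|\big)$. The justification you offer (quadratic growth of $H_\e$ with locally Lipschitz derivative) does not give a bound \emph{linear} in $|p|$: for a Hamiltonian quadratic at infinity the $x$-variation at fixed coordinate momentum contains the term $\tfrac1{2\e^2}\big(g^{ij}(x_\a)-g^{ij}(y_\a)\big)p_ip_j$, so the honest chart estimate is of size $\big(1+\tfrac1{\e^2}|p|^2\big)\,d(x_\a,y_\a)$ --- this is the paper's inequality \eqref{41}. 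With only the standard penalization facts you invoke, $|p|\le C\,\a\, d(x_\a,y_\a)$ and $\a\, d(x_\a,y_\a)^2\to0$, the resulting term of order $\a^2 d(x_\a,y_\a)^3$ is not controlled, so the final step does not close as written.

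There are two ways to repair it. (i) The paper's way: use that $v$ is Lipschitz and compare the value of the penalized function at the maximizer with its value at $(x_\a,t_\a,x_\a,s_\a)$; this gives the sharper bound $d(x_\a,y_\a)\le K/\a$ (the paper's \eqref{42}), hence $|p|$ is bounded uniformly in $\a$, and then even the coarse quadratic bound \eqref{41} tends to zero. Note that this is where the Lipschitz hypothesis on the sub/supersolutions is actually used; your proposal never exploits it at this point. (ii) Alternatively, use the special structure of the penalization: $q$ is exactly the parallel transport of $p$ along the minimal geodesic from $x_\a$ to $y_\a$, so the purely quadratic part $\tfrac1{2\e^2}|p|_x^2$ cancels identically and the remaining difference is genuinely of order $(1+|p|)\,d(x_\a,y_\a)$ (with a constant depending on $\e$); equivalently, in normal coordinates centered at $x_\a$ the metric coefficients agree to second order, so the offending term is $O(|p|^2d^2)=O\big((\a d^2)^2\big)\to0$. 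Either version must also be checked across the region $|p|_x\le \e R$ where the quadratic form of $H$ is not assumed, and the coefficient of $|p-q|$ should be $(1+|p|+|q|)$ rather than $1$ (harmless). One of these two devices is indispensable; as it stands, the inequality you assert is precisely the unproved content of the comparison argument in this setting.
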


To illustrate the proof we show it in case $u$ and $v$ are differentiable.
If there is a point in $\Om$ where $u>v$ then there is $\la>0$ such that 
\begin{equation}\label{mxpt}
\sup_\Om (u-v-\la t)>0.
\end{equation}
Let $(x_0,t_0)$ be an (interior) point where the supremum \eqref{mxpt} is attained.
We have that
\begin{equation}\label{subsup}
\begin{aligned}\partial_t u(x_0,t_0) + H_\e(x_0, \partial_x u(x_0,t_0))&\le 0,
 \\
\partial_t v(x_0,t_0) + H_\e(x_0, \partial_x v(x_0,t_0))&\ge 0.
\end{aligned}
\end{equation}

\begin{gather}
\partial_x u(x_0,t_0)= \partial_x v(x_0,t_0),
\label{29}
\\
\partial_t u(x_0,t_0) -\partial_t v(x_0,t_0) = \la >0.
\label{30}
\end{gather}
Substracting the equations in \eqref{subsup} we get a contradiction with
 equations \eqref{29}, \eqref{30}.

\begin{proof}

Suppose there is a point $(x_0,t_0)$ in $\Om$ where $u>v$.
Then there is $\la>0$ such that 
\begin{equation}\label{maxpt}
\sup_\Om (u-v-2\la t) >0.
\end{equation}
For $\de>0$ small, define $f_\de:\Om\times \Om\to \re$ by
$$
f_\de(x,t,y,s):= u(x,t)-v(y,s)-\la(t+s) 
-\tfrac 1\de\big(|t-s|^2+d(x,y)^2\big).
$$
Let $(x_\de,t_\de,y_\de,s_\de)$ be a maximizing point of $f_\de$ in $\Om\times \Om$.

We have that 
\begin{equation}\label{fm0}
f_\de(x_\de,t_\de,y_\de,s_\de)\ge f_\de(x_0,t_0,x_0,t_0)>0.
\end{equation}
\begin{align}
\tfrac 1\de \big(|t_\de-s_\de|^2+d(x_\de,y_\de)^2\big)
&< u(x_\de,t_\de)-v(y_\de,s_\de)-\la(s_\de+t_\de)
\notag\\
&\le \sup_\Om |u|+\sup_\Om|v| + \sup_\Om 2\la|t| =:Q
\notag\\
|t_\de-s_\de|^2+d(x_\de,y_\de)^2
&\le \de \,Q
\notag\\
\max\{|s_\de-t_\de|,\,d(x_\de,y_\de)\} &\le Q_1\, \sqrt{\de}, \qquad \qquad Q_1:=\sqrt{Q}.
\label{dxy}
\end{align}
Observe that if $(x,t)\in\partial \Om$
then $u(x,t)\le v(x,t)$ and hence, for $\tau:=\inf\{t : (x,t)\in\Om\}$,
\begin{equation}\label{fbound}
\forall(x,t)\in\partial\Om\qquad f_\de(x,t,x,t) \le - 2\la t\le -2\la \tau.
\end{equation}

We show now that both $(x_\de,t_\de)$ and $(y_\de,s_\de)$ are interior points of $\Om$
if $\de$ is small enough.
Suppose on the contrary that $(x_\de,t_\de)\in\partial\Om$.
Let $K$ be a Lipschitz constant for $v$. Using \eqref{fbound},
we have that 
\begin{align*}
f_\de(x_\de,t_\de,y_\de,s_\de)
&= f_\de(x_\de,t_\de,y_\de,s_\de) -f(x_\de,t_\de,x_\de,t_\de)+f(x_\de,t_\de,x_\de,t_\de)
\\
&\le\big(-v(y_\de,s_\de)+v(x_\de,t_\de)-\la(s_\de-t_\de)\big) -2\la\tau
\\
&\le K\big(d(x_\de,y_\de)+|t_\de-s_\de|\big)+\la |t_\de-s_\de|-2\la \tau.
\end{align*}
Then, using~\eqref{dxy}, there is $\de_0>0$ such that 
$$
0<\de<\de_0 \quad\&\quad (x_\de,t_\de)\in\partial \Om
\quad \then \quad f_\de(x_\de,t_\de,y_\de,s_\de)<0.
$$
This contradicts \eqref{fm0}, therefore $(x_\de,t_\de)\notin\partial \Om$
and similarly $(y_\de,s_\de)\notin\partial \Om$ if $0<\de<\de_0$.

Define $\phi:]0,T]\times N\to\re$ by
\begin{gather*}
u(x,t)-\phi(x,t) :=f_\de(x,t,y_\de,s_\de),
\\
\phi(x,t)= v(y_\de,s_\de)+\la(t+s_\de)
+\tfrac 1\de(|t-s_\de|^2+d(x,y_\de)^2).
\end{gather*}
Observe that $u-\phi$ has a local maximum at $(x_\de,t_\de)$, then
\begin{gather}
\partial_t\phi(x_\de,t_\de)+H_\e\big(x_\de,\partial_x\phi(x_\de,t_\de)\big)\le 0,
\notag\\
\la+ \tfrac 2\de (t_\de-s_\de)+H_\e\big (x_\de, \tfrac 2\de d(x_\de,y_\de)\, \nabla_x d(x_\de,y_\de)\big)\le 0.
\label{35}
\end{gather}
Define $\psi:]0,T]\times N\to\re$ by
\begin{gather*}
v(y,s)-\psi(y,s) :=-f_\de(x_\de,t_\de,y,s),
\\
\psi(x,t)= u(x_\de,t_\de)-\la(t_\de+s)
-\tfrac 1\de(|t_\de-s|^2+d(x_\de,y)^2).
\end{gather*}
Then $v-\psi$ has a local minimum at $(y_\de,s_\de)$. Since $v$ is a supersolution we have that 
\begin{gather}
\partial_s\psi(y_\de,s_\de)+H_\e\big(y_\de,\partial_y\psi(y_\de,s_\de)\big)\ge 0,
\notag\\
-\la- \tfrac 2\de (s_\de-t_\de)+H_\e\big (y_\de, -\tfrac2\de d(x_\de,y_\de)\, \nabla_y d(x_\de,y_\de)\big)\ge 0.
\label{36}
\end{gather}
Subtracting
\eqref{35}$-$\eqref{36} we obtain
\begin{equation}\label{37}
2\la + H_\e\big (x_\de, \tfrac 2\de d(x_\de,y_\de)\, \nabla_x d(x_\de,y_\de)\big)
-H_\e\big (y_\de, -\tfrac 2\de d(x_\de,y_\de)\, \nabla_y d(x_\de,y_\de)\big)
\le 0.
\end{equation}
Since $\la>0$, if we can prove that 
\begin{equation}\label{38}
H_\e\big (x_\de, \tfrac 2\de d(x_\de,y_\de)\, \nabla_x d(x_\de,y_\de)\big)
-H_\e\big (y_\de, -\tfrac 2\de d(x_\de,y_\de)\, \nabla_y d(x_\de,y_\de)\big)
\end{equation}
is small for $\de$ small, we get a contradiction with \eqref{37}.

Since by \eqref{dxy} $d(x_\de,y_\de)$ is very small we have that 
$$
\nabla_{y_{\de}}d(x_\de,y_\de)=\ga_{xy}'(d(x_\de,y_\de)),
$$
 where $\ga_{xy}(t)$ is the 
unit speed geodesic going from $x_\de$ to $y_\de$.

Consider the local parametrization of $\hM$ given by the exponential map
\begin{equation}\label{exp}
\exp_{x_\de}:T_{x_\de}\hM\to \hM.
\end{equation}
 In these coordinates we have that
\begin{alignat*}{2}
\tfrac 2\de\, d(x_\de,y_\de)\, \nabla_y d(x_\de,y_\de) 
&= \tfrac 2\de(y_\de-x_\de),
\qquad
&& x_\de= 0\in T_{x_\de}\hM,
\\
\tfrac 2\de\, d(x_\de,y_\de)\, \nabla_x d(x_\de,y_\de) 
&= -\tfrac 2\de(y_\de-x_\de),
&&x_\de= 0\in T_{x_\de}\hM.
\end{alignat*}
In these coordinates \eqref{38} becomes
\begin{equation}\label{40}
H_\e(x_\de, p_\de) - H_\e(y_\de, p_\de),
\qquad p_\de := -\tfrac 2\de (y_\de-x_\de).
\end{equation}
Since $H$ is quadratic at infinity, there is $A>0$ such that
\begin{equation}\label{41}
|H_\e(x,p)-H_\e(y,p)|\le A \;(1+\tfrac 1{\e^2}|p|_x^2)\; d(x,y),
\end{equation}
and this constant $A$ is uniform for the unit ball 
$B(0,1)\subset T_x\hM$ in the coordinates \eqref{exp}
{\sl for every} $x\in\hM$.
When $\de\to 0$ the points $x_\de$ and $y_\de$ move,
but inequalities \eqref{dxy} and \eqref{41}
remain valid with the same constants $Q_1$ and $A$.

We need to bound the distance from $(x_\de,t_\de)$ to $\partial \Om$.
If $(z,r)\in\partial \Om$ by~\eqref{fbound} we have that 
$$
0 \le f(x_\de,t_\de,y_\de,s_\de)-f(z,r,z,r)+f(z,r,z,r),
$$
\begin{align*}
0\le u(x_\de,t_\de)&-v(y_\de,s_\de)-\la(t_\de+s_\de)
-\tfrac 1{\de}(|t_\de-s_\de|^2+d(x_\de,y_\de)^2)-
\\
&-u(z,r)+v(z,r)+\la 2r
-2\la\tau,
\end{align*}
\begin{align*}
0\le  K\big( d[(x_\de,t_\de),(z,r)]+d[(y_\de,s_\de),(z,r)]\big)
+|\la|(|t_\de-r|+|s_\de-r|) -2\la\tau.
\end{align*}
From \eqref{dxy}
\begin{align*}
d[(y_\de,s_\de),(z,r)]&\le  d[(x_\de,t_\de),(z,r)]+2 Q_1 \sqrt{\de},
\\
|s_\de-r| &\le |t_\de-r|+Q_1\sqrt{\de}.
\end{align*}
Therefore
$$
2\la\tau
-(2K+\la)\,2Q_1\,\sqrt{\de}
\le 2K d[(x_\de,t_\de),(z,r)]+2 \la |t_\de-r|.
$$
Then for $\de$ small enough
$$
\la \tau \le 2(K+\la)\,d[(x_\de,t_\de),(z,r)].
$$
This implies that there are $a>0$ and $\de_1>0$  such that
$$
\forall \de\in]0,\de_1[
\qquad
 d((x_\de,t_\de),\partial\Om) >a.
$$
This and \eqref{dxy} imply that for $\de$ small enough
$(x_\de,s_\de)\in\interior\Om$.

Since $(x_\de,t_\de,y_\de,s_\de)$ is a maximum on $\Om\times\Om$,
$v$ is Lipschitz and $(x_\de,s_\de)\in\interior\Om$, we have that 
\begin{align*}
0&\le f_\de(x_\de,t_\de,y_\de,s_\de)- f_\de(x_\de,t_\de, x_\de,s_\de)
\\
0&\le
\big( v(x_\de,s_\de)-v(y_\de,s_\de)\big)
-\tfrac 1\de \, d(x_\de,y_\de)^2
\end{align*}
\begin{align}
\tfrac 1\de  \, d(x_\de,y_\de)^2
&\le K\, d(x_\de,y_\de)
\notag\\
d(x_\de,y_\de) &\le K\, \de.
\label{42}
\end{align}

Using  \eqref{40}, \eqref{41}, \eqref{42}  we obtain
\begin{equation*}
|H_\e(x_\de,p_\de)-H_\e(y_\de,p_\de)|\le
A(1+\tfrac 1{\e^2}  \, 4 K^2) K \de
\xrightarrow{\;\;\de\downarrow 0\;\;} 0.
\end{equation*}
Since~\eqref{40} is equal to \eqref{38}, we get that   \eqref{38} is arbitrarily small for $\de$ small enough
and this contradicts \eqref{37}.

\end{proof}

\begin{Corollary}\label{ccomp}\quad

If $H$ is quadratic at infinity, $\Om\subset ]0,T[\times\hM$ is a compact set, $u$ is a Lipschitz viscosity 
subsolution of \eqref{HJ2} and $v$ is a Lipschitz viscosity supersolution of \eqref{HJ2}
then 
$$
\max_\Om (u-v) \le \max_{\partial\Om}(u-v).
$$
\end{Corollary}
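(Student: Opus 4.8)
The plan is to deduce the corollary directly from the Comparison Theorem (Theorem~\ref{tcomp}) by absorbing the boundary gap into a constant shift of the supersolution. The only genuinely new observation needed is that the supersolution property is invariant under adding a constant, which is immediate because the equation \eqref{HJ2} sees the unknown only through its derivatives.

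First I would set $c:=\max_{\partial\Om}(u-v)$; this maximum is attained and finite since $u-v$ is continuous and $\partial\Om$ is compact, being a closed subset of the compact set $\Om$. Likewise $\max_\Om(u-v)$ is attained. So the inequality to prove is $u-v\le c$ on $\Om$.

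Next I would introduce $\tv:=v+c$ and check that $\tv$ is again a Lipschitz viscosity supersolution of \eqref{HJ2} on $\Om$. Indeed, for any $\phi\in C^1$, the function $\tv-(\phi+c)$ coincides with $v-\phi$ and thus attains a local minimum at exactly the same points; moreover $\partial_t(\phi+c)=\partial_t\phi$ and $\partial_x(\phi+c)=\partial_x\phi$, so the defining inequality \eqref{supersol} for $v$ transfers verbatim to $\tv$ with test function $\phi+c$. Lipschitz continuity of $\tv$ is clear. By the very definition of $c$ we have $u-v\le c$, i.e. $u\le v+c=\tv$, on $\partial\Om$.

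Then I would simply invoke Theorem~\ref{tcomp} with subsolution $u$ and supersolution $\tv$: since $u\le\tv$ on $\partial\Om$, the theorem gives $u\le\tv$ on all of $\Om$, that is $u-v\le c$ on $\Om$. Taking the maximum over $\Om$ yields $\max_\Om(u-v)\le c=\max_{\partial\Om}(u-v)$, as claimed. I do not expect any real obstacle here; the only points requiring (trivial) care are the constant-shift invariance of the supersolution property and the remark that both maxima in the statement exist by compactness of $\Om$ and continuity of $u,v$.
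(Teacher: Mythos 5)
Your proposal is correct and is essentially the paper's own argument: the paper likewise shifts the supersolution by the constant $a=\max_{\partial\Om}(u-v)$, notes that $v+a$ remains a supersolution, and applies Theorem~\ref{tcomp}. Your extra remarks on the constant-shift invariance via test functions and on the attainment of the maxima are fine but add nothing beyond the paper's one-line proof.
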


\begin{proof}\quad

Observe that if $a\in\re$ then $v+a$ is also a supersolution of \eqref{HJ2}.
Apply this to 
\linebreak
$a=\max_{\partial \Om}(u-v)$. We get that
on $\partial \Om$, $u\le v+ a$, then from Theorem~\ref{tcomp},
$u\le v+a$ on $\Om$, this is 
$$
\max_\Om (u-v) \le a = \max_{\partial\Om}(u-v).
$$
\end{proof}

\bigskip

\section{Proof with test functions.}

Here we prove Theorem~\ref{T1}.
By Proposition~\ref{eqlip} and Proposition~\ref{Paa} in order to prove that $\lim_\e v^\e=u$
it is enough to prove that there is a unique possible limit of subsequences $v^{\e_n}$.
Since the equation~\eqref{hhj} has a unique solution, the following proposition finishes 
the proof of Theorem~\ref{T1}.

\begin{Proposition}\label{Plim}\quad

If a subsequence $u^{\e_n}$ of the family  $u^\e$ in~\eqref{uef} 
(of solutions to~\eqref{eve}) converges uniformly on compact subsets
to a function $u=\lim_n u^{\e_n}$, then $u$ satisfies in the viscosity sense the equation
\begin{equation}\label{hjj2}
\begin{gathered}
\partial_t u +\oH(\partial_x u) = 0,\\
u(x,0)=f(x).
\end{gathered}
\end{equation}

\end{Proposition}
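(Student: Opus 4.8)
The plan is to use Evans' perturbed test function method, proving separately that $u$ is a viscosity subsolution and a viscosity supersolution of \eqref{hjj2}, and then checking the initial condition. Let me sketch the subsolution case (the supersolution case being symmetric). Suppose $\phi\in C^1(H_1(M,\re)\times]0,\infty[)$ and $u-\phi$ has a strict local maximum at $(P_0,t_0)$, with $P:=\partial_x\phi(P_0,t_0)\in H^1(M,\re)$. We want $\partial_t\phi(P_0,t_0)+\oH(P)\le 0$. The idea is to correct $\phi$ by the cell-problem solution: let $w=w_P:M\to\re$ be a (Lipschitz viscosity) solution of the cell problem \eqref{cell}, lift it to $\hw=w\circ\pi$ on $\hM$, so that \eqref{lcell} holds, and consider the perturbed test function on $M_{\e_n}$
\begin{equation*}
\phi^{\e_n}(y,t):=\phi(F_{\e_n}(y),t)+\e_n\,\hw(y).
\end{equation*}

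First I would compute $\partial_y\phi^{\e_n}(y,t)=DG(y)^*\!\cdot\partial_x\phi(F_{\e_n}(y),t)+\e_n\,D\hw(y)$ using $F_{\e_n}=\e_n G$, so that $\tfrac1{\e_n}\partial_y\phi^{\e_n}=\tfrac1{\e_n}DG(y)^*\partial_x\phi(F_{\e_n}(y),t)+D\hw(y)$. Since $F_{\e_n}(y)\to P_0$ near the relevant points and $\partial_x\phi$ is continuous, $DG(y)^*\partial_x\phi(F_{\e_n}(y),t)$ is close to $DG(y)^*P=P\cdot DG(y)$, which by \eqref{pdg} equals $\pi^*(g(P))(y)$; thus $\tfrac1{\e_n}\partial_y\phi^{\e_n}$ is, up to a uniformly small error plus the $\tfrac1{\e_n}$-scaling subtlety, close to $g(P)(\pi y)+D\hw(y)$ in the argument of $\hH$ — but one must be careful, since the error is multiplied by $\tfrac1{\e_n}$. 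The standard trick is: $u^{\e_n}-\phi^{\e_n}$ attains a local max at some $(y_n,\tau_n)$ with $F_{\e_n}(y_n)\to P_0$, $\tau_n\to t_0$ (using that the max of $u-\phi$ is strict, $\lim u^{\e_n}=u$, and the compactness from Corollary~\ref{c1.3}); then apply the subsolution property of $u^{\e_n}$ for \eqref{eve} with test function $\phi^{\e_n}$ at $(y_n,\tau_n)$:
\begin{equation*}
\partial_t\phi(F_{\e_n}(y_n),\tau_n)+\hH\!\Big(y_n,\tfrac1{\e_n}DG(y_n)^*\partial_x\phi(F_{\e_n}(y_n),\tau_n)+D\hw(y_n)\Big)\le 0.
\end{equation*}

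Now I would control the Hamiltonian term. Write $P_n:=\partial_x\phi(F_{\e_n}(y_n),\tau_n)\to P$. The point is that $DG(y_n)^*P_n=P_n\cdot DG(y_n)=\pi^*(g(P_n))(y_n)$ by \eqref{pdg}, so the argument of $\hH$ is $\tfrac1{\e_n}\pi^*(g(P_n))(y_n)+D\hw(y_n)$ — here the $\tfrac1{\e_n}$ really is a problem, so instead one should arrange the perturbation to absorb it: the correct perturbed test function uses the cell solution scaled so that the $\hH$ argument becomes $g(P)(\pi y_n)+D\hw(y_n)+(\text{small})$. Concretely, since $g$ is linear and $DG$ intertwines $P$ with $\pi^*g(P)$, one writes the test function as $\phi(F_{\e_n}(y),t)+\e_n\hw(y)$ and uses that $\partial_y[\phi(F_{\e_n}(y),t)]=\e_n\,DG(y)^*\partial_x\phi=\e_n\,\pi^*(g(\partial_x\phi))(y)$, so $\tfrac1{\e_n}\partial_y\phi^{\e_n}=\pi^*(g(\partial_x\phi(F_{\e_n}(y),t)))(y)+D\hw(y)$ exactly — no bad $\tfrac1{\e_n}$ factor. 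Then by continuity of $g\circ\partial_x\phi$ and compactness, $g(\partial_x\phi(F_{\e_n}(y_n),\tau_n))$ converges to $g(P)$ uniformly in $y_n$ (the forms live on the compact $M$), so $\hH(y_n,\,g(P_n)(\pi y_n)+D\hw(y_n))$ differs from $\hH(y_n,\,g(P)(\pi y_n)+D\hw(y_n))$ by $o(1)$ using local uniform continuity of $H$ and equi-boundedness of $D\hw$ (Lipschitz bound on the cell solution). The latter equals $\oH(P)=\a(P)$ by \eqref{lcell}. Passing to the limit $n\to\infty$ gives $\partial_t\phi(P_0,t_0)+\oH(P)\le 0$, as desired. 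The supersolution inequality is obtained the same way with a strict local minimum and the reversed inequality, using Lemma~\ref{l03} to justify using a (Lipschitz $C^1$) cell-solution-perturbed test function. Finally, the initial condition $u(x,0)=f(x)$ follows from the equi-Lipschitz bound on $v^{\e_n}$ in $t$ (Proposition~\ref{eqlip}) together with $v^{\e_n}(y,0)=f_{\e_n}(y)$ and $\lim_n f_{\e_n}=f$ uniformly: $|u(x,t)-f(x)|\le$ (Lipschitz constant)$\cdot t$ after passing to the limit.

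The main obstacle is making rigorous that the perturbed test function $\phi^{\e_n}$ is an admissible $C^1$ (Lipschitz) test function — the cell solution $w$ is generally only Lipschitz, not $C^1$, so $\phi^{\e_n}$ is only Lipschitz, and one must invoke Lemma~\ref{l03}\eqref{l12} to use Lipschitz test functions, and more importantly handle the viscosity (rather than classical) meaning of the cell equation \eqref{lcell}: where $\hw$ is not differentiable, one cannot directly plug $D\hw(y_n)$ into $\hH$. The standard resolution (Evans) is a double perturbation — also perturb by a smooth function touching $u^{\e_n}-\phi^{\e_n}$ from above, or use the fact that at a maximum point of $u^{\e_n}-\phi^{\e_n}$ one can slide a paraboloid and use the cell equation's viscosity subsolution property at a nearby point — but on a manifold with the $\tfrac1{\e_n}$ scaling this requires care; one may instead use a mollified/smoothed approximate corrector $w^\delta$ solving the cell equation up to error $\delta$, which is the cleanest route and what I would write. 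A secondary technical point is ensuring the local max point $(y_n,\tau_n)$ of $u^{\e_n}-\phi^{\e_n}$ stays in a fixed compact set with $F_{\e_n}(y_n)\to P_0$; this uses the strictness of the maximum of $u-\phi$, the uniform-on-compacts convergence $u^{\e_n}\to u$, the uniform smallness of $\e_n\hw$, and the properness statement in Corollary~\ref{c1.3}.
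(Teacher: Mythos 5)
Your overall strategy is the same as the paper's (Evans' method with the perturbed test function $\phi^{\e}(x,t)=\phi(F_\e(x),t)+\e\,\hw(x)$, $\hw=w\circ\pi$ a cell solution for $P=D_x\phi(y_0,t_0)$), and you correctly identify the crux: $\hw$ is only Lipschitz, so $\phi^{\e}$ is not an admissible test function and one cannot plug $D\hw(y_n)$ into $\hH$ at an uncontrolled maximum point. But the fix you declare ``cleanest'' --- replacing $w$ by a mollified approximate corrector $w^{\de}$ --- does not close the gap in the very direction you wrote out. By convexity of $H$ in $p$ and Jensen's inequality, mollifying a Lipschitz viscosity solution of the cell problem yields only the one-sided bound $H\big(x,g(P)+Dw^{\de}(x)\big)\le \oH(P)+C\de$ (an approximate \emph{sub}solution); smooth approximate \emph{super}solutions are not produced this way and do not exist in general (this is the standard asymmetry: smooth approximate critical subsolutions exist, supersolutions cannot be smoothed). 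In your subsolution argument, however, at the maximum point $(y_n,\tau_n)$ of $u^{\e_n}-\phi^{\e_n}$ you obtain $\partial_t\phi+\hH\big(y_n,\,g(P_n)(\pi y_n)+Dw^{\de}(y_n)\big)\le 0$ and you must then bound the Hamiltonian term from \emph{below} by $\oH(P)-o(1)$ at that arbitrary point; the mollified corrector gives exactly the wrong inequality there. So your route would only handle the supersolution half of the proposition, not the subsolution half you sketched.

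The paper resolves this without smoothing: it proves (Claim in the proof of Proposition~\ref{Plim}) that $\phi^{\e}$ is a \emph{viscosity} supersolution of $\partial_t\phi^\e+\hH(x,\tfrac1\e D_x\phi^\e)\ge\tfrac\th2$ on $(F_\e\times id)^{-1}(B_r(y_0,t_0))$, by taking any $C^1$ test function $\psi^{\e}$ touching $\phi^{\e}$ from below and transferring it, via $\eta:=\tfrac1\e[\psi^\e-\phi(F_\e(\cdot),\cdot)]$, into a test function for the lifted cell equation \eqref{lcell2} at $\hw$; no differentiability of $\hw$ is used. It then needs a comparison principle between the Lipschitz subsolution $u^{\e}$ and the Lipschitz supersolution $\phi^{\e}$ on the compact set $(F_\e\times id)^{-1}(\ov{B_r(y_0,t_0)})$ (Corollary~\ref{ccomp}), which in turn requires the reduction to a Hamiltonian quadratic at infinity (Lemma~\ref{Llim}) --- two ingredients absent from your sketch but essential to force the maximum of $u^{\e}-\phi^{\e}$ to the boundary of the small ball and contradict the strictness of the maximum of $u-\phi$. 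Your ``double perturbation'' hint points vaguely in this direction, but as written the argument has a genuine gap; either adopt the viscosity-transfer-plus-comparison scheme of the paper, or restrict the mollification trick to the direction where an approximate subsolution corrector is what is needed.
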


\begin{Lemma}\quad\label{Llim}

It is enough to prove Proposition~\ref{Plim} for hamiltonians quadratic at infinity.
\end{Lemma}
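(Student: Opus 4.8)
The goal is to reduce the general Tonelli case to the case of Hamiltonians that are quadratic at infinity, so that the Comparison Theorem~\ref{tcomp} and Corollary~\ref{ccomp} become available. The strategy is a standard approximation/truncation argument: given a Tonelli Hamiltonian $H$, I would construct a sequence of Hamiltonians $H^{(k)}$, each quadratic at infinity, with $H^{(k)}\to H$ in the $C^0_{\loc}$ topology (in fact uniformly on $T^*M$ on the sets $\{|p|_x\le R\}$), such that each $H^{(k)}$ is still Tonelli (convex and superlinear), and such that $\a^{(k)}\to\a$ pointwise on $H^1(M,\re)$, where $\a^{(k)}$ is Mather's alpha function of $H^{(k)}$. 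Concretely, fix a Riemannian metric; for $|p|_x\le R_k$ leave $H$ unchanged, and for $|p|_x$ large replace $H$ by $\tfrac12|p|_x^2+p(\xi_k(x))+V_k(x)$, interpolating smoothly in an annulus $R_k\le |p|_x\le 2R_k$ while keeping fiberwise convexity (this is possible for $R_k$ large because superlinearity of $H$ forces $H$ to dominate any fixed quadratic eventually, and one can match first and second derivatives at $|p|_x=2R_k$ by choosing $\xi_k,V_k$ appropriately — this is where a little care with convex interpolation is needed). As $R_k\to\infty$ we get $H^{(k)}=H$ on $\{|p|_x\le R_k\}$.

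**Transfer to the solutions.** For each $k$, let $v^{\e}_{(k)}$ be the Lax–Oleinik solution of~\eqref{eve} with $\hH$ replaced by $\hH^{(k)}$ and the same data $f_\e$. Since $H^{(k)}\to H$ uniformly on compact subsets of $T^*M$ and all $H^{(k)}$ share a common superlinearity bound (they all dominate $\tfrac12|p|^2 - C$), Step~1 of Proposition~\ref{eqlip} gives a uniform a-priori velocity bound for minimizers, hence a common compact set $\{|p|_x\le N\}$ in momentum containing all relevant momenta for all $k$ and all $\e\le 1$; consequently the $v^{\e}_{(k)}$ are equi-Lipschitz with constants independent of both $k$ and $\e$. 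From the Lax formula~\eqref{uef} and the fact that the Lagrangians $L^{(k)}$ converge uniformly on compact subsets of $TM$ to $L$ (Fenchel duality is continuous on compacts under these uniform bounds), one gets $\sup_{x,t\in K}|v^{\e}_{(k)}(x,t)-v^{\e}(x,t)|\le \om_k \to 0$ as $k\to\infty$, uniformly in $\e$ (here $K$ ranges over compact sets of $M_\e\times[0,T]$, with the bound depending only on $T$ and the common Lipschitz constant). Similarly $\a^{(k)}\to\a$: this follows from the variational formula $\a(c)=\inf_{[\om]=c}\sup_x H(x,\om(x))$ together with the uniform convergence $H^{(k)}\to H$ and the a-priori bound that the near-optimal closed forms $\om$ have $\sup_x|\om(x)|_x$ bounded in terms of $c$ (by superlinearity the sup in the formula is $+\infty$ for forms that are too large, so the $\inf$ is effectively over a compact family).

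**Conclusion.** Suppose Proposition~\ref{Plim} is known for Hamiltonians quadratic at infinity. Given a subsequence with $u^{\e_n}\to u$ uniformly on compacts, apply a diagonal argument: for each $k$, the functions $v^{\e_n}_{(k)}$ are equi-Lipschitz, so along a further subsequence $v^{\e_n}_{(k)}\to u_{(k)}$ uniformly on compacts, and by the quadratic-at-infinity case $u_{(k)}$ solves $\partial_t u_{(k)}+\oH^{(k)}(\partial_x u_{(k)})=0$ with $u_{(k)}(x,0)=f(x)$. Since $|v^{\e_n}_{(k)}-v^{\e_n}|\le\om_k$ uniformly in $n$, we get $|u_{(k)}-u|\le\om_k$ on every compact set, so $u_{(k)}\to u$ uniformly on compacts as $k\to\infty$; also $\oH^{(k)}\to\oH$ uniformly on compacts. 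By the standard stability of viscosity solutions under uniform convergence of both the solutions and the Hamiltonians, $u$ solves $\partial_t u+\oH(\partial_x u)=0$ with $u(x,0)=f(x)$, which is~\eqref{hjj2}. Hence it suffices to prove Proposition~\ref{Plim} in the quadratic-at-infinity case.

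**Main obstacle.** The delicate point is the construction of the $C^2$ quadratic-at-infinity approximations $H^{(k)}$ that remain \emph{genuinely convex} (positive-definite fiber Hessian) throughout the interpolation annulus while matching $H$ exactly on a large ball and being exactly $\tfrac12|p|^2+p(\xi_k)+V_k$ outside — one must check that superlinearity of $H$ really does allow a convex $C^2$ patch between $H$ on $\{|p|_x=R_k\}$ and a translated paraboloid on $\{|p|_x=2R_k\}$. The other place requiring genuine (though routine) care is the uniform-in-$\e$ control of the error $\om_k$: this rests on the uniform velocity bound for action minimizers across all $k$, which in turn needs the common superlinear lower bound, so the approximations must be arranged to be superlinear uniformly in $k$.
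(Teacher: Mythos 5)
Your argument is correct in outline, but it takes a genuinely different and heavier route than the paper. The paper exploits the equi-Lipschitz bound of Proposition~\ref{eqlip} (and Proposition~\ref{limlip}) to observe that equation~\eqref{eve} only ever evaluates $H$ on a fixed compact set $\{|p|_x\le Q\}$ of momenta, that the limit $u$ and the solutions of~\eqref{hjj2} only use $\oH$ on a compact set of cohomology classes, and that, by the inf--sup formula $\oH(c)=\inf_{[\eta]=c}\sup_x H(x,\eta(x))$ of \cite{CIPP}, any Hamiltonian agreeing with $H$ on the sublevel set $[H\le h_0]$ has the same effective Hamiltonian on that range. Hence a \emph{single} quadratic-at-infinity Hamiltonian $H_1$ with $H_1=H$ on $\{|p|_x\le R_0\}$ produces exactly the same family $u^\e$, the same limits $u$, and the same homogenized problem: no sequence $H^{(k)}$, no error $\om_k$, no diagonal argument, and no stability theorem for viscosity solutions are needed. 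The existence of such an $H_1$ — the convex interpolation you single out as your main obstacle — is not an issue: the paper simply cites Proposition~18 of \cite{CIPP2}, and you should do the same rather than leave it open. Note also that your own velocity bound (Step~1 of Proposition~\ref{eqlip}, which is uniform in $\e$ because the rescaled speeds satisfy $\e|\dga|\le a_0$) shows that once $R_k$ exceeds the corresponding fixed momentum threshold the minimizers and values for $L^{(k)}$ and $L$ coincide, so your $\om_k$ is actually $0$ for large $k$ and likewise $\a^{(k)}(c)=\a(c)$ on the relevant compact set of classes; in other words, your approximation scheme collapses to the paper's exact-replacement argument, and the genuinely $k$-dependent ingredients you invoke (uniform superlinearity of the $H^{(k)}$, convergence $\a^{(k)}\to\a$, stability under simultaneous convergence of solutions and Hamiltonians) are correct but superfluous once this equality is observed.
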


\begin{proof}\quad

By Proposition~\ref{eqlip} there is a uniform Lipschitz constant $Q$ for all the solutions $u^\e$.
By Rademacher Theorem $\partial_y u^\e$ is defined almost everywhere and it is a weak derivative of $u^\e$.
Observe that 
\begin{align*}
Q\ge \lV \partial_y u^\e\rV_\e 
= \sup_{|v|_\e=1} |\partial_y u^\e\cdot v|
=\sup_{|v|_1=\frac 1\e}  |\partial_y u^\e\cdot v|
=\tfrac 1\e\; \lV\partial_y u^\e\rV_1.
\end{align*}
Thus in equation~\eqref{eve} 
\begin{equation}
\tag{\ref{eve}} \partial_t u^\e + \hH(y,\tfrac 1\e\partial_y u^\e) =0,
\end{equation}
we only use the Hamiltonian $H$ on co-vectors $(y,p)\in T^*M$ with $\lv p\rv_y \le Q$.

By Proposition~\ref{limlip} the limit function $u$ has some Lipschitz constant $QB$,
with $B>1$ from~\eqref{1a}.
Thus the effective hamiltonian $\ov H$ in equation \eqref{hjj2} 
for $u=\lim u^{\e_n}$ is only used on co-vectors 
$c\in H_1(M,\re)^*=H^1(M,\re)$ with norm $|c|\le QB$.

On the other hand, by Proposition~\ref{limlip}, the function $f$ is Lipschitz. 
Since Mather's $\a$ and $\be$ functions $\ov H$ and $\ov L$ 
are convex and superlinear, by Proposition~\ref{eqlip}
there is $Q_1>Q>0$ such that any solution of  the problem~\eqref{hjj2}
has Lipschitz constant $Q_1$.

 The effective hamiltonian $\ov H$
 is Mather's alpha function which satisfies (see~\cite{CIPP} Cor.~1):
$$
\oH(c) = \inf_{[\eta]=c}\sup_{x\in M} H(x,\eta(x)).
$$
Therefore
$$
H_1 = H \quad\text{on}\quad [H\le \oH(c)] \quad\then \quad
\ov H_1(c)=\ov H(c).
$$
Let $h_0:=\sup_{|c|\le Q_1 B} \oH(c)$.
Then if $H_1 = H$ on
$[H\le h_0]$, any limit function $u=\lim_n u^{\e_n}$
satisfies the problem \eqref{hjj2} if and only if it satisfies
the problem with $H_1$:
\begin{equation}\label{81ov}
\begin{gathered}
\partial_t u +\oH_1(\partial_x u) = 0,\\
u(x,0)=f(x).
\end{gathered}
\end{equation}
Moreover, the solutions of problems~\eqref{hjj2} and \eqref{81ov} are equal.

By the superlinearity of $H$ there is $R_0> Q_1$ such that 
$[H\le h_0]\subset [|p|_x\le R_0]$.
Therefore we can replace $H$ with a hamiltonian $H_1$ quadratic at infinity
such that $H_1=H$ on $|p|_x\le R_0$ and we will have that the families $u^\e$, the limits $u$ 
and any solution of \eqref{hjj2}
will be the same for both hamiltonians.
For a construction of such hamiltonian quadratic at infinity see Proposition~18 in \cite{CIPP2}.

\end{proof}

\noindent{\bf Proof of Proposition~\ref{Plim}:}\quad

    By Lemma~\ref{Llim} we can assume that $H$ is quadratic at infinity, so we can apply
    Corollary~\ref{ccomp}.
   We prove that $u$ is a viscosity subsolution of \eqref{hjj2}.
   The proof for supersolution will be similar.
   By Lemma~\ref{l03}, it is enough to use Lipschitz test functions giving 
   strict local maxima.
   
  Let $\phi\in C^1(H_1(M,\re)\times ]0,+\infty[,\re)$
    be such that  $\phi$ is Lipschitz and 
    \begin{equation}\label{strict}
    \text{$u-\phi$ has a strict local maximum at $(y_0,t_0)$.}
    \end{equation}
    We want to show that
    \begin{equation}\label{18}
    \partial_t\phi(y_0,t_0)+\oH(D_x\phi(y_0,t_0))\le 0.
    \end{equation}
  
    Assume that \eqref{18} is not true, i.e. there is $\th>0$ such that
    \begin{equation}\label{eth}
    \partial_t\phi(y_0,t_0)+\oH(D_x\phi(y_0,t_0))=\th>0.
    \end{equation}

    Let $w:M\to\re$ be a viscosity solution to the cell problem
    \begin{equation}
    \label{cell2}
    H(x,g(P)+D w(x))=\oH(P), \qquad P:=D_x\phi(y_0,t_0)\in H^1(M,\re),
    \end{equation}
    and let $\hw = w\circ \pi$ be its lift to $\hM$.
    
    Define $\phi^\e:M_\e\times\re_+\to\re$, $M_\e=(\hM, d_\e)$,  by
    \begin{equation}\label{phie}
    \phi^\e(x,t):=\phi(F_\e(x),t)+\e\,\hw(x).
    \end{equation}
    
    By \eqref{lcell} the lift $\hw=w\circ\pi$
    is a viscosity solution of 
    \begin{equation}\label{lcell2}
    \hH(x, P\cdot DG(x) + D\hw(x))=\oH(P), \quad x\in\hM, 
    \quad P=D_x\phi(y_0,t_0)\in H^1(M,\re).
    \end{equation}
    Observe that in \eqref{lcell2} $P=D_x\phi(y_0,t_0):H_1(M,\re)=\re^k\to\re$ 
    is a linear functional on $H_1(M,\re)=\re^k$ and also on each tangent 
    space $T_z H_1(M,\re)$, even when $z\ne y_0$, this is
    $$
    P=D_x\phi(y_0,t_0)\in H^1(M,\re)=H_1(M,\re)^*.
    $$

 \bigskip
    
    \begin{claim} \label{cl1}
    For some $r,\e_0>0$ small and $\hH:=H\circ(\pi^*)^{-1}$,
    the lift of $H$,
    \begin{equation}\label{HJt2}
    \forall \e<\e_0 \qquad
    \partial_t\phi^\e+ \hH(x,\tfrac 1\e D_x\phi^\e)\ge \tfrac \th 2
    \qquad (supersolution)
    \end{equation}
    \qquad\text{ 
    in the viscosity sense in $(F_\e\times id)^{-1}(B_r(y_0,t_0))$.
    }
    \end{claim}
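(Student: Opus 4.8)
The plan is to reduce the perturbed test function inequality to the cell problem by a direct chain-rule-type computation, handled carefully in the viscosity sense. First I would take a $C^1$ test function $\psi$ touching $\phi^\e$ from below at a point $(x_1,t_1)\in(F_\e\times id)^{-1}(B_r(y_0,t_0))$, i.e. $\phi^\e-\psi$ has a local minimum there. Since $\phi$ is already $C^1$, the map $x\mapsto \psi(x,t_1)-\phi(F_\e(x),t_1)$ has a local minimum at $x_1$, so $\e^{-1}\big(\psi(\cdot,t_1)-\phi(F_\e(\cdot),t_1)\big)$ is a legitimate $C^1$ test function touching $\hw$ from below at $x_1$. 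Applying the supersolution property~\eqref{lcell2} of $\hw$ at $x_1$ with covector $P=D_x\phi(y_0,t_0)$ gives
\[
\hH\Big(x_1,\ P\cdot DG(x_1)+\tfrac1\e D_x\psi(x_1,t_1)-\tfrac1\e D_x\big(\phi(F_\e(\cdot),t_1)\big)(x_1)\Big)\ \ge\ \oH(P).
\]
Now $D_x\big(\phi(F_\e(\cdot),t_1)\big)(x_1)=D_x\phi(F_\e(x_1),t_1)\cdot DG(x_1)\cdot\e$ (since $F_\e=\e G$), so the covector inside $\hH$ equals $\tfrac1\e D_x\psi(x_1,t_1)+\big(P-D_x\phi(F_\e(x_1),t_1)\big)\cdot DG(x_1)$, while $\tfrac1\e D_x\phi^\e$ corresponds to $\tfrac1\e D_x\psi$ in the same coordinates at the minimum point.

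The second step is the continuity/smallness estimate: by the $C^1$ regularity of $\phi$ and the bound~\eqref{NG} on $\|DG\|$, for $r$ small the difference $\big(P-D_x\phi(F_\e(x_1),t_1)\big)\cdot DG(x_1)$ is uniformly small (in the relevant $T^*\hM$ norm), uniformly in $\e$, because $F_\e(x_1)$ is within $r$ of $y_0$ and $t_1$ within $r$ of $t_0$. Likewise $|\partial_t\psi(x_1,t_1)-\partial_t\phi(F_\e(x_1),t_1)|$ is small. Since $H$ is (after Lemma~\ref{Llim}) quadratic at infinity, hence locally uniformly continuous on the relevant bounded set of covectors — note the covectors $\tfrac1\e D_x\psi$ stay bounded because $\phi^\e$, hence its touching test functions, inherit from $u^\e$ and $\hw$ a uniform bound on $\tfrac1\e\|D_x(\cdot)\|$ — I can absorb these perturbations at the cost of $\tfrac\th4$, say. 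Combining with~\eqref{eth}, i.e. $\partial_t\phi(y_0,t_0)+\oH(P)=\th$, and using continuity of $\partial_t\phi$ gives
\[
\partial_t\psi(x_1,t_1)+\hH\big(x_1,\tfrac1\e D_x\psi(x_1,t_1)\big)\ \ge\ \oH(P)+\partial_t\phi(y_0,t_0)-\tfrac\th4-\tfrac\th4\ \ge\ \tfrac\th2,
\]
for all $\e<\e_0$ and $r$ small enough, which is exactly~\eqref{HJt2}.

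The main obstacle I anticipate is the bookkeeping of covectors across the three identifications in play: the linear functional $P=D_x\phi(y_0,t_0)$ viewed on every tangent space $T_zH_1(M,\re)$, the pullback $P\cdot DG(x)=\pi^*(g(P))(x)$ via~\eqref{pdg}, and the $\e$-rescaling built into $F_\e=\e G$ and into $H_\e(x,p)=\hH(x,\tfrac1\e p)$. One must check that the term produced by differentiating $\phi\circ F_\e$ cancels the $\e\cdot P\cdot DG$ coming from the cell-problem covector exactly, leaving only the genuinely small error $\big(P-D_x\phi(F_\e(x_1),t_1)\big)\cdot DG$; this is where working in the exponential coordinates~\eqref{exp} at $x_1$, as in the proof of Theorem~\ref{tcomp}, keeps the algebra honest. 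A secondary point requiring care is justifying that the viscosity-solution property of $\hw$ may be invoked at the moving point $x_1$ with the fixed covector $P$: this is fine because~\eqref{lcell2} holds at every point of $\hM$, and the test function I build for $\hw$ is genuinely $C^1$ since $\phi\circ F_\e$ is $C^1$.
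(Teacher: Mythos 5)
Your proposal is correct and follows essentially the same route as the paper: your rescaled test function $\e^{-1}\big(\psi-\phi\circ F_\e\big)$ is exactly the paper's $\eta$, and the covector cancellation together with the smallness of $\big(P-D_x\phi(F_\e(x_1),t_1)\big)\cdot DG(x_1)$ via the $C^1$-continuity of $\phi$ and the bound~\eqref{NG} is the paper's argument, with your explicit remark that $\tfrac1\e D_x\psi$ stays bounded (so only local uniform continuity of $\hH$ is needed) being a useful point the paper leaves implicit. Two minor corrections: the time-derivative matching is not a continuity estimate but an exact identity $\partial_t\psi(x_1,t_1)=\partial_t\phi(F_\e(x_1),t_1)$ from the first-order condition in $t$ at the interior minimum (the paper's $\partial_t\eta=0$), and the uniform bound on $\tfrac1\e D_x\psi$ at touching points comes from the Lipschitz constants of $\phi$ and $\hw$ (through $\phi^\e$), not from $u^\e$.
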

  
    {\bf Proof of the Claim:}
    
    Suppose that $\phi^\e-\psi^\e$ has a  minimum at $(x_1^\e,t_1)\in M_\e\times\re_+$
    with $F_\e(x_1^\e)\to y_1$ and $(y_1,t_1)$ near $(y_0,t_0)$. 
    We have to prove that
    \begin{equation}\label{caim}
    \partial_t\psi^\e(x_1^\e,t_1)+\hH\big(x_1^\e,\tfrac 1\e D_x\psi^\e(x_1^\e,t_1)\big)\ge \tfrac \th 2.
    \end{equation}
 
    We have that 
    $$
    \phi^\e-\psi^\e\ge \phi^\e(x_1^\e,t_1)-\psi^\e(x_1^\e,t_1).
    $$
    Using \eqref{phie}, this is
    $$
    \phi(F_\e(x),t)+\e\, \hw(x) -\psi^\e(x,t)
    \ge
    \phi(F_\e(x_1^\e),t_1)+\e\, \hw(x_1^\e)-\psi^\e(x_1^\e,t_1).
    $$
    Define $\eta:\hM\times\re_+\to\re$ by
    $$
    \eta(x,t):=\tfrac 1\e\,\big[\psi^\e(x,t)-\phi(F_\e(x),t)\big].
    $$
    Then $\hw-\eta$ has a local minimum at $(x_1^\e,t_1)$.
    
    By the viscosity property \eqref{supersol} on the lift  \eqref{lcell} 
    of  \eqref{cell2}, we have that
    $$
    \hH(x_1^\e, P\cdot DG(x_1^\e)+D_x\eta(x_1^\e,t_1))\ge \oH(P),
    $$
    where $P=D_x\phi(y_0,t_0)\in H^1(M,\re)$ and $\hH=H\circ (\pi^*)^{-1}$.
    Then by \eqref{eth},
    $$
    \partial_t\phi(y_0,t_0)+\hH\big(x_1^\e,P\cdot DG(x_1^\e)+D_x\eta(x_1^\e,t_1)\big)
    \ge \partial_t\phi(y_0,t_0)+\oH(P)=\th.
    $$
    Using\eqref{Fe}, observe that
    $$
    D_x\eta(x,t)=-D_x\phi(F_\e(x),t) \,DG(x) +\tfrac 1\e\,D_x\psi^\e(x,t).
    $$
    Thus
    $$
    \partial_t \phi_t(y_0,t_0)
    +\hH\big[x_1^\e,\,P\cdot DG(x_1^\e)-D_x\phi(F_\e(x^\e_1),t_1)\,DG(x_1^\e)
    +\tfrac 1\e\,D_x\psi^\e(x_1^\e,t_1)\big]
    \ge\th.
    $$
    
    Writing $y_1^\e=F_\e(x_1^\e)$
    and recalling that $P=D_x\phi(x_0,t_0)$, 
    we have that 
    \begin{align*}
    \lv \partial_t\phi(y_0,t_0)\right.&
    \left.-\;\partial_t\phi(y_1^\e,t_1)\rv
    +\partial_t\phi(y_1^\e,t_1)\;+\\
    &+\hH\big[x_1^\e,\,
    \big(D_x\phi(y_0,t_0)-D_x\phi(y_1^\e,t_1)\big)\,DG(x_1^\e)
    +\tfrac 1\e\,D\psi^\e(x_1^\e,t_1)\big]
    \ge \th.
    \end{align*}

      In \eqref{strict} we have that  $\phi\in C^1(H_1(M,\re)\times\re_+,\re)$
      and by \eqref{NG},  $\lV DG\rV$ is bounded. Thus, if $d[(y_0,t_0),(y_1^\e,t_1)]<r$ 
    is small enough, then
    $$
   \partial_t  \phi(y_1^\e,t_1)+\hH[x_1^\e,\tfrac 1\e\,D_x\psi^\e(x_1^\e,t_1)]\ge\frac\th 2.
    $$
    
    Since $\hw-\eta$ has a local minimum at $(x_1^\e,t_1)$ and $\hw$ 
    is time-independent, we have that
    $$
    \partial_t\eta(x_1^\e,t_1) =
    \tfrac 1\e\,\big[\partial_t\psi^\e(x_1^\e,t_1)
    -\partial_t\phi(F_\e(x_1^\e),t_1) \big]
    =0.
    $$
    This is 
    $
    \partial_t\psi^\e(x_1^\e,t_1)=\partial_t \phi(y_1^\e,t_1).
    $
    Therefore we get \eqref{caim}:
    $$
    \partial_t\psi^\e(x_1^\e,t_1) + \hH\big(x_1^\e,\tfrac 1\e\,D_x\psi^\e(x_1^\e,t_1)\big)
    \ge\tfrac\th 2.
    $$
    This proves the claim, i.e.
    $\phi^\e$ satisfies \eqref{HJt2}
    in the viscosity sense in a small neighborhood
    $(F_\e\times id)^{-1}(B_r(y_0,t_0))$.
   
    \hfill$\triangle$

    \medskip

   Let $x_0^\e\in M_\e$ be such that $\lim_\e F_\e(x_0^\e)= y_0$.
   By  Claim \ref{cl1}, the function $\phi^\e$ is a viscosity supersolution 
   of 
   \begin{equation}\label{HJe}
   \partial_t u+\hH(x,\tfrac 1\e\,D_xu)=0
   \end{equation}
   nearby $(x_0^\e,t_0)$. 
    \bigskip

    By the choice of $\phi$ in \eqref{strict}, $\phi$ is Lipschitz.
    The solution $w$ to the cell problem \eqref{cell2}, in $M$ is also Lipschitz.
    By Lemma~\ref{lLip}, the map
    $F_\e$ is Lipschitz on $(M_\e,d_\e)$.
    Then by \eqref{phie} $\phi^\e$ is Lipschitz on $(M_\e,d_\e)$.
    By Proposition~\ref{eqlip} the functions $u^\e$ are Lipschitz on $(M_\e,d_\e)$.
    
    By Corollary~\ref{c1.3}, if $0<r<t_0$ the set $(F_\e\times id)^{-1}(\ov{B_r(y_0,t_0)})$ is compact.
    Since $\phi^\e$ is a supersolution and $u^\e$ is a subsolution
    of \eqref{HJe} and $F_\e(x^\e_0)\xrightarrow{\e} y_0$; by Corollary~\ref{ccomp} 
    for $\e$ small enough
    we have that
    \begin{equation}\label{uhb}
    u^\e(x_0^\e,t_0)-\phi^\e(x_0^\e,t_0)
    \le \sup_{\partial (F_\e\times id)^{-1}(B_r(y_0,t_0))}
    (u^\e-\phi^\e).
    \end{equation}
        
    Let $(z^\e,s^\e)\in \partial (F_\e \times id)^{-1}(B_r(y_0,t_0))$  be such that 
    $$
    u^\e(x_0^\e,t_0)-\phi^\e(x_0^\e,t_0)
    \le u^\e(z^\e,s^\e)-\phi^\e(z^\e,s^\e).
    $$
    Observe that from \eqref{phie}, 
    if $F_\e(x^\e)\to y$ and $t^\e\to t$ then $\phi^\e(x^\e,t^\e)\to \phi(y,t)$.
    Let $\e_i\to 0$ be a sequence such that
    the following holds:
    \begin{enumerate}[(a)]
    \item $\lim_i u^{\e_i}(x_0^{\e_i},t_0)= u(y_0,t_0)$,
    \item the limit 
    $(z_0,s_0):=\lim_i (F_{\e_i}(z^{\e_i}),s^{\e_i})$
    exists,
    \item $\lim_i u^{\e_i}(z^{\e_i},s^{\e_i})= u(z_0,s_0)$.
    \end{enumerate}
   Then we have that 
    $d((y_0,t_0),(z_0,s_0))=r$ and
    $$
    u(y_0,t_0)-\phi(y_0,t_0)\le u(z_0,s_0)-\phi(z_0,s_0).
    $$

   This contradicts \eqref{strict}, the  strict local maximum property of $(y_0,t_0)$. 
   Thus \eqref{eth} is impossible, and hence we get \eqref{18}:
   $$
   \partial_t\phi (y_0,t_0)+\oH(D_x\phi (y_0,t_0))\le 0
   $$
   whenever $\phi\in C^1$, $\phi$ is  Lipschitz, and $u-\phi$ has a strict local maximum at 
   a point $(y_0,t_0)$. Therefore $u$ is a viscosity subsolution 
   of \eqref{hjj2}.
   
   The proof that $u$ is a viscosity supersolution of  \eqref{hjj2} is similar.
   
    \qed

\section{Uniqueness.}
\label{Suniqueness}

We prove here the uniqueness of Lipschitz viscosity solutions
to the initial value problem of the evolutive Hamilton-Jacobi equation
on the abelian cover $\hM$.

  \begin{Lemma}\label{LT}\quad
  
  Suppose that $u:\hM\times[0,T]\to\re$ is a continuous function which is 
  a viscosity subsolution of \eqref{HJ2} on the  open interval $t\in]0,T[$.
  If $\phi$ is $C^1$ and $u-\phi$ attains a local maximum at a point 
  $(x_0,T)$, then
  \begin{equation}\label{81}
  \partial_t\phi(x_0,T) + H(x_0,\partial_x\phi(x_0,T))\le 0.
  \end{equation}
  A corresponding statement holds for supersolutions.
  \end{Lemma}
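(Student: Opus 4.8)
The plan is to reduce the boundary-time statement to the interior-time definition of subsolution by a standard perturbation that pushes the contact point slightly into the open slab $t\in\,]0,T[$. Suppose $u-\phi$ has a local maximum at $(x_0,T)$; by replacing $\phi$ with $\phi+|x-x_0|^2$ in local coordinates (which does not change the derivatives of $\phi$ at $(x_0,T)$, exactly as in the proof of Lemma~\ref{l03}\eqref{l11}) we may assume the maximum is strict in the $x$-variable. The difficulty is that the maximum sits on the boundary $\{t=T\}$, so we cannot directly invoke \eqref{subsol}.

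The main step is to introduce, for small $\de>0$, the test function
$$
\phi_\de(x,t):=\phi(x,t)+\frac{\de}{T-t},
$$
which is $C^1$ on $\hM\times\,]0,T[$ and blows up as $t\uparrow T$. Since $u$ is bounded near $(x_0,T)$ while $\phi_\de\to+\infty$ there, the function $u-\phi_\de$ attains a local maximum over a small closed neighborhood at some interior point $(x_\de,t_\de)$ with $t_\de<T$; moreover as $\de\downarrow 0$ one has $(x_\de,t_\de)\to(x_0,T)$, using that $(x_0,T)$ is a strict local max of $u-\phi$. Applying the interior subsolution inequality \eqref{subsol} at $(x_\de,t_\de)$ gives
$$
\partial_t\phi(x_\de,t_\de)+\frac{\de}{(T-t_\de)^2}+H\big(x_\de,\partial_x\phi(x_\de,t_\de)\big)\le 0.
$$
The extra term $\de/(T-t_\de)^2$ is nonnegative, so it can simply be dropped, yielding
$$
\partial_t\phi(x_\de,t_\de)+H\big(x_\de,\partial_x\phi(x_\de,t_\de)\big)\le 0.
$$
Letting $\de\downarrow 0$ and using continuity of $\partial_t\phi$, $\partial_x\phi$ and of $H$, together with $(x_\de,t_\de)\to(x_0,T)$, gives \eqref{81}.

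The only point requiring care — and the main obstacle — is showing that the maxima $(x_\de,t_\de)$ stay interior and converge to $(x_0,T)$: one must choose a compact neighborhood $\ov{B_\rho(x_0)}\times[T-\rho,T]$ on which $(x_0,T)$ is the unique maximizer of $u-\phi$, argue that for $\de$ small the max of $u-\phi_\de$ over this neighborhood is not attained on the spatial boundary sphere (because there $u-\phi$ is strictly smaller by a fixed gap, while the penalty $\de/(T-t)$ is uniformly small away from $t=T$) nor at $t=T$ (where $u-\phi_\de=-\infty$), hence it is attained at an interior point; a standard compactness argument then forces $(x_\de,t_\de)\to(x_0,T)$ and $t_\de<T$. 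The supersolution case is identical with the penalty term $-\de/(T-t)$, a local minimum in place of a local maximum, and the reversed inequalities throughout.
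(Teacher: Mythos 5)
Your proposal is correct and follows essentially the same route as the paper: add a quadratic perturbation to make the maximum strict, penalize with $\de/(T-t)$ to push the contact point to an interior time $t_\de<T$, apply the interior subsolution inequality, drop the nonnegative term $\de/(T-t_\de)^2$, and let $\de\downarrow 0$. One small fix: since your convergence argument uses that $(x_0,T)$ is the unique maximizer on $\ov{B_\rho(x_0)}\times[T-\rho,T]$, the perturbation should also include a term $|t-T|^2$ (as the paper does, via $d(x,x_0)^2+|t-T|^2$), which costs nothing because its $t$-derivative vanishes at $t=T$.
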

  
  \begin{proof}\quad
  
  Replacing $\phi$ by $\phi(x,t) + d(x,x_0)^2+|t-T|^2$ we can assume that 
  $u-\phi$ attains a strict local maximum at $(x_0,T)$.
  For $\de>0$ let
  $$
  \phi_\de(x,t) :=\phi(x,t)+\frac\de{T-t}.
  $$
  Then the function $u-\phi_\de$ attains a local maximum at a point $(x_\de,t_\de)$ 
  with
  $$
  t_\de<T, 
  \qquad 
  (x_\de,t_\de) \longrightarrow (x_0,T) 
  \quad\text{as } \de \to 0^+.
  $$
  Since $u$ is a viscosity subsolution on $t<T$, we have that 
  \begin{align*}
  \partial_t\phi(x_\de,t_\de)
  +H(x_\de,\partial_x \phi(x_\de,t_\de))
  &=
  \partial_t\phi_\de(x_\de,t_\de)
  +H(x_\de,\partial_x \phi_\de(x_\de,t_\de)) -\frac{\de}{(T-t_\de)^2}\le 0.
  \end{align*}
  Letting $\de\to 0$ we obtain \eqref{81}.
  
  \end{proof}

  \begin{Proposition}\label{Pun}\quad
  
  Suppose that $H:T^*M\to\re$ is quadratic at infinity.
  
  Let $H_\e:T^*\hM\to\re$ be $H_\e(x,p)=\hH(x,\tfrac 1\e p)$.
  Consider the equation
  \begin{equation}\label{HJ5}
  \partial_t u+H_\e(x,\partial_xu)=0.
  \end{equation}
  Suppose that  $u$ is a Lipschitz viscosity subsolution of \eqref{HJ5} and 
  $v$ is a Lipschitz viscosity supersolution of \eqref{HJ5}.
  If $u\le v$ on $\hM\times\{0\}$ then $u\le v$.
 \end{Proposition}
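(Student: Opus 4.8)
The plan is to reduce this global comparison principle to the compact one, Corollary~\ref{ccomp}, by a finite-speed-of-propagation argument. Since the conclusion is local in $T$, it suffices to prove $u\le v$ on $\hM\times[0,T]$ for each fixed $T$; fix $T$ and a point $(\bar x,\bar t)$ with $0<\bar t\le T$, and aim to show $u(\bar x,\bar t)\le v(\bar x,\bar t)$. Let $\kappa$ be a common Lipschitz constant for $u$ and $v$. If $\phi\in C^1$ and $u-\phi$ (resp. $v-\phi$) has a local maximum (resp. minimum) at $(x_1,t_1)$, then $|\partial_x\phi(x_1,t_1)|_{x_1}\le\kappa$; so in \eqref{HJ5} only the values of $H_\e$ on covectors with $|p|_x\le\kappa+1$ matter. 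Because $H$ is quadratic at infinity and $M$ is compact, $(x,p)\mapsto\partial_pH_\e(x,p)$ is bounded, uniformly in $x$, on $\{|p|_x\le\kappa+1\}$; fix $L=L_\e>0$ with $|\partial_pH_\e(x,p)|_x\le L$ there. This $L$ plays the role of the propagation speed.

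Next I would work on the backward light cone
$$
\mathcal C:=\bigl\{(x,t):0\le t\le\bar t,\ d(x,\bar x)\le(L+1)(\bar t-t)\bigr\},
$$
which is closed and bounded in $\hM\times\re$, hence compact by Hopf--Rinow; its boundary consists of the bottom disc $B_0=\mathcal C\cap\{t=0\}$ and the lateral face $\Lambda=\{d(x,\bar x)=(L+1)(\bar t-t)\}$. For $\eta>0$ the function
$$
\tv_\eta(x,t):=v(x,t)+\eta\bigl(d(x,\bar x)+(L+1)t\bigr)
$$
(after the usual smoothing of $d(\cdot,\bar x)$) is a strict supersolution of \eqref{HJ5}: at a local minimum of $\tv_\eta-\phi$ one has $|\partial_x\phi|\le\kappa+1$, and combining the supersolution inequality for $v$ with the bound $|\partial_pH_\e|\le L$ on that range yields $\partial_t\phi+H_\e(x,\partial_x\phi)\ge\eta>0$. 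Applying Corollary~\ref{ccomp} to the subsolution $u$ and the supersolution $\tv_\eta$ on the compact set $\mathcal C\cap\{t\ge\sigma\}\subset\,]0,T[\times\hM$, for $0<\sigma<\bar t$, bounds $\max(u-\tv_\eta)$ on that slab by its maximum over the boundary; the strict inequality between the slope $L+1$ of $\mathcal C$ and the speed $L$ forces this boundary maximum to be attained on the bottom $\{t=\sigma\}$ rather than on the lateral face. Letting first $\sigma\downarrow0$ and then $\eta\downarrow0$, and using $u\le v$ on $\hM\times\{0\}$, we obtain $u-v\le0$ on $\mathcal C$, in particular at the apex $(\bar x,\bar t)$. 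When $\bar t=T$ one invokes Lemma~\ref{LT} so that the subsolution property of $u$ is usable up to the closed endpoint (equivalently, take $T$ slightly larger).

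The main obstacle is precisely the finite-speed step: showing that the boundary maximum of $u-\tv_\eta$ over the truncated cone $\mathcal C\cap\{t\ge\sigma\}$ cannot sit on the open lateral face. This is the only place where the extra slope is used; it is proved by a localized application of Corollary~\ref{ccomp} on thin time-slabs $\{t_0\le t\le t_0+\delta\}\cap\mathcal C$, in which the lateral contribution is controlled by the value of $u-v$ one slab lower, giving monotonicity in $t$ of $\sup_{d(x,\bar x)\le(L+1)(\bar t-t)}(u-v)(x,t)$. A more computational alternative is to rerun the doubling-of-variables proof of Theorem~\ref{tcomp} directly on $\hM\times[0,T]$, after inserting the time penalty $\tfrac\eta{T-t}$ (as in Lemma~\ref{LT}) to keep the maximizer away from $t=T$ and a spatial penalty so that the doubled functional attains its maximum despite the non-compactness of $\hM$; there the delicate point is that the maximizing pair may escape to spatial infinity as the spatial penalty tends to $0$, so one needs the key estimate \eqref{41} with constants independent of the base point --- which is exactly what ``quadratic at infinity'' provides.
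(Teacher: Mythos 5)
Your plan hinges on a finite-speed-of-propagation step that is never actually proved, and it is precisely the hard part. The mechanism you describe does not force the boundary maximum of $u-\tv_\eta$ onto the bottom of the cone: on the lateral face one has $d(x,\bar x)+(L+1)t\equiv(L+1)\bar t$, so there $u-\tv_\eta=(u-v)-\eta(L+1)\bar t$ is just $u-v$ shifted by a constant, and since nothing is yet known about $u-v$ away from $t=0$, the penalty by itself excludes nothing. Your fallback, the thin-slab induction, also does not close with the tools of the paper: Corollary~\ref{ccomp} (and Theorem~\ref{tcomp}) bound $\max_\Om(u-v)$ by the maximum over the \emph{entire} topological boundary $\partial\Om$, including the top time-slice of each slab, so ``the lateral contribution is controlled by the value of $u-v$ one slab lower'' is circular unless you first establish a parabolic-boundary comparison principle, which is neither in the paper nor supplied by you. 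You also cannot repair this by evaluating the subsolution inequality for $u$ and the strict supersolution inequality for $\tv_\eta$ at a maximum point: both functions are merely Lipschitz, so any such pointwise argument requires the doubling-of-variables machinery all over again. Finally, even granting a parabolic version, each slab comparison controls the lateral face only up to an additive Lipschitz error of order $\delta$, and you never show that the accumulated error over $\bar t/\delta$ slabs is absorbed by the strict margin $\eta$. So the central step of your primary route is a genuine gap, not a routine verification.

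What you offer as a ``more computational alternative'' is in fact the paper's proof of Proposition~\ref{Pun}: a doubling of variables carried out directly on $\hM\times[0,T]$, with the maximizer's existence on the non-compact cover secured by the spatial penalty $\de\,(f(x)^2+f(y)^2)$, where $f$ is a smooth function growing like $d(\cdot,x_0)$ with $\lV\nabla f\rV\le 2$; the initial condition $u\le v$ at $t=0$ together with the Lipschitz bounds keeps $t_\de,s_\de$ away from $0$ (no $\eta/(T-t)$ term is needed there), Lemma~\ref{LT} handles the possibility $t_\de=T$ or $s_\de=T$, and the base-point-uniform estimate \eqref{101} coming from ``quadratic at infinity'', combined with the refined bounds $d(x_\de,y_\de)=O(\de^{7/4})$ and $|P_\de|=O(\de^{-1/4})$, makes the Hamiltonian differences $O(\de^{1/4})$, contradicting $2\la>0$. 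The delicate points you flag for that route (escape to spatial infinity, uniformity of the constants) are exactly the ones the paper addresses; if you want a complete argument, carry out that version rather than the cone reduction, or else prove the parabolic-boundary comparison on cones as a separate lemma before invoking it.
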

  
  \begin{proof}\quad

  Assume that $u(x,0)\le v(x,0)$ for all $x\in\hM$.
  Suppose by contraposition that 
  there is a point in $(x_0,t_0)\in \hM\times [0,T]$ where $u>v$.
  Then there is $\la>0$ such that 
  $$
  (u-v-2\la\,t)(x_0,t_0)=:\si>0.
  $$
  
   The function $g(x):= d(x,x_0)$ is not differentiable at $x_0$
  and the points in the cut locus of $x_0$. It is Lipschitz and hence weakly differentiable
  and $\lV \nabla_x d(x,x_0)\rV=1$ almost everywhere. Let $f:\hM\to\re$ be a smooth
  function such that 
  \begin{gather}\label{fx}
  d(x,x_0)>1\quad  \then\quad  f(x) > \tfrac 12 \, d(x,x_0), 
  \\
  \forall x\in \hM  \qquad   \lV\nabla_x f\rV\le 2,
  \label{nf}
  \\
  f(x_0)=0.
  \notag
  \end{gather}

    Let $K$ be a Lipschitz constant for $u$ and for $v$. Then
  \begin{align}
  u(x_\de,t_\de)-u(y_\de,s_\de)
  &= u(x_\de,t_\de) -u(x_\de,0)+u(x_\de,0)
  -u(y_\de,0)
 \notag \\
  &\hskip 1.9cm +u(y_\de,0)-v(y_\de,0)
  +v(y_\de,0)-v(y_\de,s_\de)
  \notag
  \\
  &\le K|t_\de| + K d(x_\de,y_\de)  + 0 +K|s_\de|
  \label{du}
  \end{align}

  For $0<\de<1$ small, define $f_\de:(\hM\times [0,T])^2\to\re$ by
  \begin{equation*}\label{fde}
  f_\de(x,t,y,s):= u(x,t)-v(y,s)-\la(t+s)-\tfrac 1{\de^2}\big(|t-s|^2+ d(x,y)^2\big)
  -\de\, \big( f(x)^2+f(y)^2\big).
  \end{equation*}
  Since $u$ and $v$ have linear growth and by~\eqref{fx},
  $f(x)^2$ has quadratic growth;
  there is a point $(x_\de,t_\de,y_\de,s_\de)$ which maximizes $f_\de$ in 
  $\hM\times[0,T]$.

  We have that 
  \begin{equation}\label{si}
  f_\de(x_\de,t_\de,y_\de,s_\de)\ge f_\de(x_0,t_0,x_0,t_0)=\si>0.
  \end{equation}

  Using \eqref{du}, we have that 
  \begin{align}
  \tfrac 1{\de^2}\big(|t_\de-s_\de|^2+d(x_\de,y_\de)^2\big)
  &<
  u(x_\de,t_\de)-v(y_\de,s_\de)-\la(s_\de+t_\de)
  \notag\\
  &\le K|t_\de| + K d(x_\de,y_\de)  +K|s_\de| + 0
  \notag\\
  &\le K\,d(x_\de,y_\de) + 2KT.
  \label{dtdx}
  \end{align}
  Then
  $$
  \tfrac 1{\de^2}d(x_\de,y_\de)^2 \le K d(x_\de,y_\de) +2 K T.
  $$
  We have that $z:=d(x_\de,y_\de)$ satisfies the inequality
  $$
  z^2 -K \de^2 z -2KT\de^2 \le 0.
  $$
  Then $z$ must be smaller than the larger root of this quadratic polynomial, i.e. 
  \begin{equation}\label{dq}
  d(x_\de,y_\de)=z \le \tfrac 12 \big( K\de^2 +\sqrt{K^2\de^4+8KT\de^2}\big)
  \le Q_0\, \de,
  \end{equation}
  where $Q_0=Q_0(T)$.
  From \eqref{dtdx}  using \eqref{dq} and $\de\le 1$ we also obtain
  \begin{equation}\label{dt}
  |t_\de-s_\de|\le Q_1 \, \de, 
  \end{equation}
  with some $Q_1=Q_1(T) \ge Q_0(T)$.

  Observe that if $\de$ is small enough, by \eqref{dq}
  the point 
  $y_\de$ is not in the cut locus of $x_\de$ and viceversa.
  Therefore the function $d(x,y)^2$ is differentiable at $(x_\de,y_\de)$
  with partial derivatives
  $\nabla_x d(x_\de,y_\de)= 2\, d(x_\de,y_\de) \nabla_x d(x_\de,y_\de)$,
  $\lV \nabla_x d(x,y)\rV =1$.

  We show that the maximum of $f_\de$ is not attained at 
  the initial time $t=0$.
  From \eqref{si} we have that 
  \begin{align*}
  \si &\le f_\de(x_\de,t_\de,y_\de,s_\de) 
  \le u(x_\de,t_\de)-v(y_\de,s_\de)
  &&
  \\
  &\le K |t_\de| + KQ_0\,\de + K|s_\de|
  &&\text{using \eqref{du} and \eqref{dq},}
  \\
  &\le 2 K \, |t_\de| + KQ_1 \de + K Q_0\,\de
  &&\text{using \eqref{dt}.}
  \end{align*}
  A similar inequality holds for $|s_\de|$.
  For $\de>0$ sufficiently small, this implies that there is $\mu>0$ such that 
  \begin{equation}\label{87}
  t_\de> \mu>0 
  \qquad \text{and}\qquad
  s_\de>\mu>0.
  \end{equation}
  We could still have $t_\de=T$ or $s_\de=T$.

  Define $\phi:\hM\times[0,T]\to\re $ by
  \begin{align*}
  u(x,t)&-\phi(x,t):=
  f_\de(x,t,y_\de,s_\de)
  \\
  &=
  u(x,t)-v(y_\de,s_\de) -\la(t+s_\de)
  -\tfrac 1{\de^2} \big(|t-s_\de|^2 +d(x,y_\de)^2\big)
  -\de\, \big(f(x)^2+f(y_\de)^2\big).
  \end{align*}
  Observe that $\phi$ is $C^1$.
  We have that $u-\phi$ attains a local maximum in  $\hM\times [0,T]$ at $(x_\de,t_\de)$.
  If $(x_\de,t_\de)$  is an interior point, since $u$ is a viscosity subsolution
  we have that 
  \begin{equation}\label{88}
  \partial_t\phi(x_\de,t_\de) + H(x_\de,\partial_x \phi(x_\de,t_\de)) \le 0.
  \end{equation}
  By \eqref{87}, $t_\de>0$ and by Lemma~\ref{LT}
  equation~\eqref{88} also holds if $t_\de=T$.
  From \eqref{88} we get that 
  \begin{align}\label{89}
  \la +\tfrac 2{\de^2}(t_\de-s_\de)
  +H\big(x_\de,\tfrac 2{\de^2}\, d(x_\de,y_\de) \, \nabla_x d(x_\de,y_\de)
  + 2 \de f(x_\de) \nabla f (x_\de)\big)
  \le 0.
  \end{align}

 Define $\psi:\hM\times[0,T]\to\re$ by
 \begin{align*}
 v(y,s)&-\psi(y,s) := -f_\de(x_\de,t_\de,y,s)
 \\
 &= v(y,s) -u(x_\de,t_\de) +\la (s+t_\de)
  + \tfrac 1{\de^2}\big(|s-t_\de|^2+d(x_\de,y)^2\big)
  +\de\,\big(f(x_\de)^2+f(y)^2\big).
 \end{align*}
 Then $\psi(y,s)$ has a local minimum at 
 $(y_\de,s_\de)$. Since $v$ is a viscosity supersolution we have that 
 \begin{equation*}
 \partial_s \psi(y_\de,s_\de)
 +H\big(y_\de,\partial_y\psi(y_\de,s_\de)\big)\ge 0,
 \end{equation*}
 and then
 \begin{equation}\label{90}
 -\la -\tfrac2{\de^2} (s_\de-t_\de)
 + H\big(y_\de, -\tfrac 2{\de^2}\,d(x_\de,y_\de)\, \nabla_y d(x_\de,y_\de)
 - 2\de\, f(y_\de)\, \nabla_y f(y_\de)\big)\ge0.
 \end{equation}
 Subtracting \eqref{89}$-$\eqref{90} we get 
 \begin{align}\label{92a}
 2\la 
 + H\big(x_\de, P_\de\,+
   2 \de f(x_\de) \nabla f (x_\de)\big)
  -
  H\big(y_\de, Q_\de\, 
 - 2\de\, f(y_\de)\, \nabla_y f(y_\de)\big)\le 0,
 \end{align}
 where 
 \begin{equation}\label{PQde}
 P_\de := \tfrac 2{\de^2}d(x_\de,y_\de)\,\nabla_xd(x_\de,y_\de),
 \qquad
 Q_\de:=-\tfrac 2{\de^2} d(x_\de,y_\de) \,\nabla_y d(x_\de,y_\de)
 \end{equation}
 and $\la>0$.
 If we show that the last two terms in~\eqref{92a} are arbitrarily small we get 
 the desired contradiction with $\la>0$.

 By \eqref{dq}, $x_\de$ is near $y_\de$, and then 
 $|\nabla_x d(x_\de,y_\de)|=|\nabla_y d(x_\de,y_\de)|=1$.
 Therefore
 \begin{equation}\label{PQ}
 |P_\de|=|Q_\de|=\tfrac 2{\de^2} \,d(x_\de,y_\de).
 \end{equation}

 We need a shaper estimate for $\frac 1{\de^2} \,d(x_\de,y_\de)$.
 From \eqref{si}, \eqref{du} and \eqref{dq}  we get that 
 \begin{align}
 \de \big( f(x_\de)^2 + f(y_\de)^2\big)
 &\le K\,d(x_\de,y_\de)+ 2 KT \le 1+ 2KT,
 \notag\\
 \max\{ f(x_\de), f(y_\de)\}
 &\le \frac{Q_3(T)}{\sqrt{\de}} ,
 \label{93a}\\
 2\de\, f(x_\de) \,\lV \nabla f(x_\de)\rV 
 &\le 4\, Q_3(T)\, \sqrt{\de},
 \qquad\text{using~\eqref{nf}.}
 \label{94}
  \end{align}
  Since $\lV \nabla f\rV\le 2$, we have that $f$ has Lipschitz constant 2.
  Therefore
  \begin{align}
  |f(x_\de)-f(y_\de)| &\le 2 \, d(x_\de,y_\de)
  \le 2 Q_0\, \de 
  &\text{using~\eqref{dq}},
  \notag\\
  f(x_\de)+f(y_\de) &\le \frac{2 Q_3}{\sqrt{\de}} 
   &\text{from~\eqref{93a},}
   \notag\\
   |f(x_\de)^2-f(y_\de)^2| &\le 4\,Q_0\,Q_3\sqrt{\de}.
   \label{95}
   \end{align}

   We need a sharper estimate for $d(x_\de,y_\de)$.
  We have that 
  \begin{gather*}
  f(x_\de,t_\de,y_\de,s_\de) \ge f(x_\de,t_\de, x_\de,t_\de),
  \\
  v(x_\de,t_\de)-v(y_\de,s_\de) 
  +\la(t_\de-s_\de) -\tfrac 1{\de^2}\big(|t_\de-s_\de|^2+ d(x_\de,y_\de)^2\big)
  +\de\, \big(f(x_\de)^2-f(y_\de)^2\big)\ge 0.
  \end{gather*}
  \begin{align}
  \tfrac 1{\de^2} \big(|t_\de-s_\de|^2+d(x_\de,y_\de)^2\big)
  &\le v(x_\de,t_\de)-v(y_\de,s_\de) +\la |t_\de-s_\de|
  +\de \big(f(x_\de)^2-f(y_\de)^2\big)
  \notag\\
  &\le K\big( |t_\de-s_\de| +d(x_\de,y_\de)\big)
  + \la \,|t_\de-s_\de|
  + 4 Q_0 Q_3 \;\de^{\frac 32}
  \label{97}\\
  &\le Q_4\, \de
  \qquad\text{using \eqref{dq} and \eqref{dt},}
  \notag\\
  \max\{|t_\de-s_\de|,d(x_\de,y_\de)\} &\le Q_5\, \de^{\tfrac 32}.
  \label{98}
  \end{align}
  We  plug inequality \eqref{98} in the right hand side of inequality \eqref{97} and get
  \begin{equation}
  d(x_\de,y_\de) \le Q_6\; \de^{\frac 74}.
  \label{99}
  \end{equation}
 Then from \eqref{PQ} we get
 \begin{equation}
 |P_\de|=|Q_\de|\le Q_7\, \de^{-\frac 14}.
 \label{100}
 \end{equation}

  For $x,y\in \hM$ nearby, let $\tau_{xy}:T_x\hM\to T_y\hM$ 
  be the parallel transport along the minimal geodesic joining $x$ to $y$.
  Since  $H$ is  quadratic at infinity,
 there is $A>0$ such that
\begin{align}
|H_\e(x,p)-H_\e(y,\tau_{xy}(p))|&\le A \;(1+\tfrac 1{\e^2}|p|^2_x)\; d(x,y),
\label{101}
\\
|H_\e(x,p)-H_\e(x,q)| 
&\le A \,\tfrac 1{\e^2} \; (1+|p|_x+|q|_x) |p-q|_x.
\notag
\end{align}
Using \eqref{100} and \eqref{94}, we have that 
\begin{align}
\big| H_\e(x_\de,P_\de + 2 \de f(x_\de)\nabla f(x_\de))
-H_\e(x_\de, P_\de)\big|
&\le
A\tfrac 1{\e^2} (2+ 2|P_\de|)\,| 2\de f(x_\de)\nabla f(x_\de)|
\notag\\
&\le Q_8\, \de^{-\frac 14} \,\de^{\frac 12} = Q_8 \, \de^{\frac 14},
\label{102}\\
\lv H_\e(y_\de, Q_\de)-H_\e(y_\de,Q_\de -2 f(y_\de) \nabla f(y_\de))\rv
&\le Q_8\,\de^{\frac 14}.
\label{103}
\end{align}
From~\eqref{PQde}, \eqref{dq},
$Q_\de =\tau_{xy}(P_\de)$.
 Then using \eqref{101},~\eqref{100} and~\eqref{99},
\begin{align}
\lv H_\e(x_\de,P_\de) -H_\e(y_\de,Q_\de)\rv 
\le A(1+\tfrac 1{\e^2} (Q_7)^2\,\de^{-\frac 12}) \,Q_6\,\de^{\frac 74}\le Q_9\, \de^{\frac 54}.
\label{104}
\end{align}
Adding the inequalities \eqref{102}, \eqref{103}, \eqref{104} and comparing with 
\eqref{92a} we get
$$
2\la - Q_{10} \, \de^{\tfrac 14} \le 0,
$$
which is false for $\de$ small enough.

 \end{proof}

  \begin{Corollary}
  \label{Cuniqueness}
  \quad
  
  If $H:TM\to\re$ is a convex superlinear hamiltonian, $\hH$
  is its lift to $T\hM$, and $f:\hM\to \re$ is Lipschitz, then
  there is a unique Lipschitz viscosity solution 
  $u:\hM\times\re\to\re$ of the initial value problem
  \begin{equation}
  \begin{aligned}
  \partial_t u + \hH(x,\partial_x u) =0,
  \\
  u(x,0)= f(x).
  \end{aligned}
  \label{HJ3}
  \end{equation}
  \end{Corollary}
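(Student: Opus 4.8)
The plan is to prove existence and uniqueness separately: existence by exhibiting the Lax--Oleinik formula as a solution, and uniqueness by reducing to the comparison principle of Proposition~\ref{Pun}. For existence, write $\hL=L\circ d\pi$ (so that, under the covering identifications $d\pi$ and $(\pi^*)^{-1}$, $\hH(x,\cdot)$ is the fiberwise Legendre dual of $\hL(x,\cdot)$) and set
$$
u(x,t):=\min\Big\{ f(y)+\int_0^t \hL(\ga,\dga)\;:\;\ga\in C^1(([0,t],0,t);(\hM,y,x))\Big\},\qquad u(x,0):=f(x),
$$
which is \eqref{uef} with $\e=1$ and $f_\e\equiv f$. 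The minimum is attained because $\pi:\hM\to M$ is a Riemannian covering of the compact manifold $M$, hence $\hM$ is complete, and $\hL$ is convex and superlinear, so Tonelli's theorem applies (and the minimizer is $C^2$). That $u$ is Lipschitz on $\hM\times[0,\infty[$ is Proposition~\ref{eqlip} applied to the constant, hence equi-Lipschitz, family $f_\e\equiv f$, at $\e=1$; the initial condition holds by definition, and the $t$-Lipschitz bound gives $|u(x,t)-f(x)|\le Q\,t\to 0$ as $t\downarrow 0$, so $u$ is continuous up to $t=0$.

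Next I would check that $u$ is a viscosity solution of \eqref{HJ3} on $\hM\times\,]0,\infty[$ by the standard dynamic-programming argument. Subsolution: if $u-\phi$ has a local maximum at $(x_0,t_0)$ with $t_0>0$, fix $v\in T_{x_0}\hM$, let $\ga(s)=\exp_{x_0}((s-t_0)v)$ on $[t_0-h,t_0]$; by \eqref{17} one has $u(x_0,t_0)\le u(\ga(t_0-h),t_0-h)+\int_{t_0-h}^{t_0}\hL(\ga,\dga)$, so the local-maximum property gives $\phi(x_0,t_0)-\phi(\ga(t_0-h),t_0-h)\le\int_{t_0-h}^{t_0}\hL(\ga,\dga)$; dividing by $h$ and letting $h\downarrow 0$ (using $\dga(t_0)=v$) yields $\partial_t\phi(x_0,t_0)+D_x\phi(x_0,t_0)\cdot v\le\hL(x_0,v)$, and taking the supremum over $v$ gives $\partial_t\phi(x_0,t_0)+\hH(x_0,D_x\phi(x_0,t_0))\le 0$. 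Supersolution: if $u-\phi$ has a local minimum at $(x_0,t_0)$ with $t_0>0$, pick a $C^2$ minimizer $\ga$ realizing $u(x_0,t_0)$; since subarcs of minimizers are minimizing, $u(x_0,t_0)=u(\ga(t_0-h),t_0-h)+\int_{t_0-h}^{t_0}\hL(\ga,\dga)$, and the local-minimum inequality gives $\phi(x_0,t_0)-\phi(\ga(t_0-h),t_0-h)\ge\int_{t_0-h}^{t_0}\hL(\ga,\dga)$; dividing by $h$ and letting $h\downarrow 0$ gives $\partial_t\phi(x_0,t_0)+D_x\phi(x_0,t_0)\cdot\dga(t_0)\ge\hL(x_0,\dga(t_0))$, hence $\partial_t\phi(x_0,t_0)+\hH(x_0,D_x\phi(x_0,t_0))\ge 0$.

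For uniqueness, let $u$ and $v$ be two Lipschitz viscosity solutions of \eqref{HJ3} with a common Lipschitz constant $K$. Following Proposition~18 of \cite{CIPP2} (already used in the proof of Lemma~\ref{Llim}), fix $R_0>K$ and replace $H$ by a Hamiltonian $H_1:T^*M\to\re$ which is quadratic at infinity and agrees with $H$ on $\{(x,p):|p|_x\le R_0\}$. The key point is that $u$ and $v$ remain respectively a Lipschitz viscosity subsolution and supersolution of $\partial_t w+\hH_1(x,\partial_x w)=0$, where $\hH_1$ is the lift of $H_1$: if $u-\phi$ has a local maximum at $(x_0,t_0)$ with $\phi\in C^1$, then in the exponential chart at $x_0$ the $K$-Lipschitz bound on $u$ forces $D_x\phi(x_0,t_0)\cdot v\ge -K$ for every unit $v\in T_{x_0}\hM$, so $|D_x\phi(x_0,t_0)|_{x_0}\le K<R_0$ and hence $\hH_1(x_0,D_x\phi(x_0,t_0))=\hH(x_0,D_x\phi(x_0,t_0))$; the same bound at local minima of $v-\phi$ handles the supersolution side. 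Since $u\le v$ on $\hM\times\{0\}$, Proposition~\ref{Pun}, applied with $H$ there taken to be $H_1$ (quadratic at infinity) and $\e=1$, gives $u\le v$ on $\hM\times[0,T]$ for every $T$, hence everywhere; exchanging $u$ and $v$ gives $u=v$. Together with the existence part this proves the corollary.

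The analytic estimates (Tonelli existence on the complete cover $\hM$, the first-order expansions in the Lax--Oleinik argument) are routine; the step requiring care is the uniqueness argument, precisely the reduction to a Hamiltonian quadratic at infinity. One must observe that the \emph{a priori} Lipschitz bound on the viscosity solutions themselves, not merely on the Lax candidate, controls the gradients of the test functions at contact points, so that truncating $H$ at momenta larger than $R_0$ changes neither the class of Lipschitz sub/supersolutions nor, via Proposition~\ref{Pun}, the comparison conclusion. Once that is in place, Proposition~\ref{Pun} does all the work.
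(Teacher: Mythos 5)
Your proposal is correct and follows essentially the same route as the paper: existence via the Lax--Oleinik formula together with Proposition~\ref{eqlip}, and uniqueness by truncating $H$ to a Hamiltonian $H_1$ quadratic at infinity that agrees with $H$ on $\{|p|_x\le R_0\}$ with $R_0$ exceeding the solutions' Lipschitz constants, then invoking the comparison result of Proposition~\ref{Pun} (with $\e=1$) in both directions. The only difference is that you spell out two steps the paper leaves implicit, namely the dynamic-programming verification that the Lax formula is a viscosity solution and the observation that test-function gradients at contact points are bounded by the Lipschitz constant, so the truncation preserves the sub/supersolution properties.
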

  
  \begin{proof}
  By Proposition \ref{eqlip}, the Lax formula,
  $$
  u(x,t) :=\min\,\Big\{\, f(\ga(0)) + \oint_\ga L \;:\; 
  \ga\in C^1\big(([0,T],T),(M,x)\big)\,\Big\}
  $$ 
  gives a Lipschitz viscosity solution of \eqref{HJ3}.
  Suppose that  $u$ and $v$ are Lipschitz viscosity solutions of \eqref{HJ3}.  
  Let 
  $$
  R_0 > \lV \partial_x u\rV_{\sup} + \lV\partial_x v\rV_{\sup}.
  $$
  There is $H_1$, quadratic at infinity such that $H_1(x,p)=H(x,p)$ if $|p|_x\le R_0$.
  For the lift $\hH_1:=H_1\circ (\pi^*)^{-1}$ we have that $u$ and $v$ are solutions of 
   \begin{equation}
  \begin{aligned}
  \partial_t u + \hH_1(x,\partial_x u) =0,
  \\
  u(x,0)= f(x).
  \end{aligned}
  \label{HJ4}
  \end{equation}
    Since $u$ and $v$ are Lipschitz solutions of \eqref{HJ4},
   they are both sub- and super- solutions of \eqref{HJ4}. 
   By Proposition~\ref{Pun},
  $u\le v$ and $v\le u$.
  
  \end{proof}


\def\cprime{$'$} \def\cprime{$'$} \def\cprime{$'$} \def\cprime{$'$}
\providecommand{\bysame}{\leavevmode\hbox to3em{\hrulefill}\thinspace}
\providecommand{\MR}{\relax\ifhmode\unskip\space\fi MR }
\providecommand{\MRhref}[2]{%
  \href{http://www.ams.org/mathscinet-getitem?mr=#1}{#2}
}
\providecommand{\href}[2]{#2}

\end{document}